\theoremstyle{plain} \swapnumbers
    \newtheorem{thm}{Theorem}[section]
    \newtheorem{prop}[thm]{Proposition}
    \newtheorem{lemma}[thm]{Lemma}
    \newtheorem{corollary}[thm]{Corollary}
    \newtheorem{subsec}[thm]{}
\theoremstyle{definition}
    \newtheorem{defn}[thm]{Definition}
    \newtheorem{example}[thm]{Example}
    \newtheorem{notation}[thm]{Notation}
    \newtheorem{summary}[thm]{Summary}
    \newtheorem{ack}[thm]{Acknowledgements}
\theoremstyle{remark}
        \newtheorem{remark}[thm]{Remark}
\newenvironment{myeq}[1][]
{\stepcounter{thm}\begin{equation}\tag{\thethm}{#1}}
{\end{equation}}
\newcommand{\mysdg}[2][]{\myeq[#1]\xymatrix@R=25pt@C=15pt{#2}}
\newcommand{\myadiagnum}[2][]
{\stepcounter{thm}\begin{equation}
     \tag{\thethm}{#1}\vcenter{\xymatrix@R=20pt@C=25pt{#2}}\end{equation}}
\newcommand{\myadiagnumm}[2][]
{\stepcounter{thm}\begin{equation}
     \tag{\thethm}{#1}\vcenter{\xymatrix@R=15pt@C=32pt{#2}}\end{equation}}
\newcommand{\myaaadiag}[2][]
{\stepcounter{thm}\begin{equation}
     \tag{\thethm}{#1}\vcenter{\xymatrix@R=10pt@C=15pt{#2}}\end{equation}}
\newcommand{\mybbbdiag}[2][]
{\stepcounter{thm}\begin{equation}
     \tag{\thethm}{#1}\vcenter{\xymatrix@R=13pt@C=17pt{#2}}\end{equation}}
\newcommand{\mycccdiag}[2][]
{\stepcounter{thm}\begin{equation}
     \tag{\thethm}{#1}\vcenter{\xymatrix@R=15pt@C=30pt{#2}}\end{equation}}
\newcommand{\mydiagram}[2][]
{\stepcounter{thm}\begin{equation}
     \tag{\thethm}{#1}\vcenter{\xymatrix{#2}}\end{equation}}
\newcommand{\myrdiag}[2][]
{\stepcounter{thm}\begin{equation}
     \tag{\thethm}{#1}\vcenter{\xymatrix@R=20pt@C=30pt{#2}}\end{equation}}
\newcommand{\myssdg}[2][]
{\stepcounter{thm}\begin{equation}
     \tag{\thethm}{#1}\vcenter{\xymatrix@R=15pt@C=15pt{#2}}\end{equation}}
\newcommand{\mykkdiag}[2][]
{\stepcounter{thm}\begin{equation}
     \tag{\thethm}{#1}\vcenter{\xymatrix@R=2pt@C=13pt{#2}}\end{equation}}
\newcommand{\mysdiag}[2][]
{\stepcounter{thm}\begin{equation}
     \tag{\thethm}{#1}\vcenter{\xymatrix@R=15pt@C=15pt{#2}}\end{equation}}
\newcommand{\mytdiag}[2][]
{\stepcounter{thm}\begin{equation}
     \tag{\thethm}{#1}\vcenter{\xymatrix@R=35pt@C=65pt{#2}}\end{equation}}
\newcommand{\myudiag}[2][]
{\stepcounter{thm}\begin{equation}
     \tag{\thethm}{#1}\vcenter{\xymatrix@R=19pt@C=19pt{#2}}\end{equation}}
\newcommand{\myvdiag}[2][]
{\stepcounter{thm}\begin{equation}
     \tag{\thethm}{#1}\vcenter{\xymatrix@R=35pt@C=50pt{#2}}\end{equation}}
\newcommand{\mydiagrm}[2][]
{\stepcounter{thm}\begin{equation}
    \tag{\thethm}{#1}\vcenter{\entrymodifiers={+++[o]}\xymatrix@R=25pt@C=25pt{#2}}\end{equation}}
\newcommand{\myrrdiag}[2][]
{\stepcounter{thm}\begin{equation}
     \tag{\thethm}{#1}\vcenter{\xymatrix@R=25pt@C=35pt{#2}}\end{equation}}
 \newcommand{\myrrrdiag}[2][]
{\stepcounter{thm}\begin{equation}
     \tag{\thethm}{#1}\vcenter{\xymatrix@R=15pt@C=55pt{#2}}\end{equation}}
\newcommand{\sect}{\setcounter{thm}{0}\section}
\newcommand{\supsect}[2]
{\vspace*{-5mm}\quad\\\begin{center}\textbf{{#1}}\vsm.~~~~\textbf{{#2}}\end{center}}
\newenvironment{mysubsection}[2][]
{\begin{subsec}\begin{upshape}\begin{bfseries}{#2.}
\end{bfseries}{#1}}
{\end{upshape}\end{subsec}}
\newcommand{\stk}[1]{\stackrel{#1} {\longrightarrow}}
\newcommand{\w}[2][ ]{\ \ensuremath{#2}{#1}\ }
\newcommand{\wwb}[1]{\ (\ensuremath{#1})-}
\newcommand{\ww}[2][ ]{\ \ensuremath{#2}{#1}}
\newcommand{\wh}{\ -- \ }
\newcommand{\wb}[2][ ]{\ (\ensuremath{#2}){#1}\ }
\newcommand{\wref}[2][ ]{\ \eqref{#2}{#1}\ }
\newcommand{\hsm}{\hspace{2 mm}}
\newcommand{\vsm}{\vspace{2 mm}}
\newcommand{\vsn}{\vspace{1 mm}}
\newcommand{\po}{\ar@{}[dr]|{\text{\pigpenfont R}}}
\newcommand{\pb}{\ar@{}[dr]|{\text{\pigpenfont J}}}
\newcommand{\hra}{\hookrightarrow}
\newcommand{\lra}[1]{\langle{#1}\rangle}
\newcommand{\llra}[1]{\langle\langle{#1}\rangle\rangle}
\newcommand{\nn}{\mathbb{N}}
\newcommand{\cof}{\operatorname{cof}}
\newcommand{\hcof}{\operatorname{hcof}}
\newcommand{\cf}[3][2]{{\cof}^{(#2)}{(#3)}}
\newcommand{\hcf}[3][2]{{\hcof}^{(#2)}{(#3)}}
\newcommand{\cub}{\operatorname{cub}}
\newcommand{\ho}{\operatorname{ho}}
\newcommand{\Hom}{\operatorname{Hom}}
\newcommand{\Id}{\operatorname{Id}}
\newcommand{\init}{\operatorname{init}}
\newcommand{\rec}{\operatorname{rec}}
\newcommand{\Rec}{\operatorname{Rec}}
\newcommand{\Final}{\operatorname{Fin}}
\newcommand{\fs}{f\sb{\ast}}
\newcommand{\gs}{g\sb{\ast}}
\newcommand{\ks}{k\sb{\ast}}
\newcommand{\FFs}{F\sb{\ast}}
\newcommand{\Xs}{X\sb{\ast}}
\newcommand{\Ys}{Y\sb{\ast}}
\newcommand{\Zs}{Z\sb{\ast}}
\newcommand{\Ws}{W\sb{\ast}}
\newcommand{\Dd}{\mathcal{D}}
\newcommand{\XF}{(\Xs,\FFs)}
\newcommand{\XFn}{(\Xs,\widetilde{F}_{\ast})}
\newcommand{\XGn}{(X'_{\ast},\widetilde{G}_{\ast})}
\newcommand{\XG}{(\Xs',G_{\ast})}
\newcommand{\rk}{\vec{r}_k}
\newcommand{\an}{\widetilde{\alpha}}
\newcommand{\bn}{\widetilde{\beta}}
\newcommand{\ann}[3][2]{{\widetilde{\alpha}}^{(#2)}_{#3}}
\newcommand{\bnn}[3][2]{{\widetilde{\beta}}^{(#2)}_{#3}}
\newcommand{\Fm}[3][2]{F^{(#2)}_{#3}}
\newcommand{\Fn}[3][2]{\widetilde{F}^{(#2)}_{#3}}
\newcommand{\Gm}[3][2]{G^{(#2)}_{#3}}
\newcommand{\Gn}[3][2]{\widetilde{G}^{(#2)}_{#3}}
\newcommand{\Tm}[3][2]{T^{(#2)}_{#3}}
\newcommand{\Tn}[3][2]{\widetilde{T}^{(#2)}_{#3}}
\newcommand{\VV}[1]{W(#1)}
\newcommand{\Ac}{\mathbb{A}}
\newcommand{\Bc}{\mathbb{B}}
\newcommand{\In}[3][2]{\mathcal{I}^{#2}_{#3}}
\newcommand{\Ln}[3][2]{\mathcal{L}^{#2}_{#3}}
\newcommand{\xm}[3][2]{\mathbb{X}^{(#2)}_{#3}}
\newcommand{\Mm}[3][2]{\mathbb{M}^{(#2)}_{#3}}
\newcommand{\CCm}[3][2]{{\mathbb{C}}^{(#2)}{#3}}
\newcommand{\CCn}[3][2]{\widetilde{\mathbb{C}}^{(#2)}{#3}}
\newcommand{\xn}[3][2]{\widetilde{\mathbb{X}}^{(#2)}_{#3}}
\newcommand{\Mn}[3][2]{\widetilde{\mathbb{M}}^{(#2)}_{#3}}
\newcommand{\AAA}[3][2]{\mathfrak{A}^{(#2)}_{#3}}
\newcommand{\BBB}[3][2]{\mathfrak{B}^{(#2)}_{#3}}
\newcommand{\AAAn}[3][2]{\widetilde{\mathfrak{A}}^{(#2)}_{#3}}
\newcommand{\BBBn}[3][2]{\widetilde{\mathfrak{B}}^{(#2)}_{#3}}
\newcommand{\FFF}[3][2]{{\mathfrak{F}}^{(#2)}_{#3}}
\newcommand{\GGG}[3][2]{{\mathfrak{G}}^{(#2)}_{#3}}
\newcommand{\pa}[3][2]{{\partial}^{#2}_{[#3]}}
\newcommand{\Fin}[3][2]{{\Final}^{(#2)}{(#3)}}
\newcommand{\vartha}[1]{\vartheta\sb{#1}}
\newcommand{\RR}[3][2]{{\mathfrak{R}}^{(#2)}{#3}}
\newcommand{\ex}{\Sigma X}
\newcommand{\ee}[3][2]{\widetilde{\Sigma}^{#2}{#3}}
\newcommand{\eex}{\widetilde{\Sigma} X}
\newcommand{\RRR}{\widetilde{R}}
\newcommand{\xfygz}{X \stk{f} Y \stk{g} Z}
\newcommand{\xfygzhw}{X \stk{f} Y \stk{g}  Z \stk{h} W}
\newcommand{\fhkg}{\raisebox{10pt}{\xymatrix@R=8pt@C=8pt{X \ar[r]^{h} \ar[d]_{f}  &  Z \ar[d]^{g}    \\
Y \ar[r]_{k} & W}}}
\newcommand{\fhkgs}{\xymatrix@R=8pt@C=8pt {  \Xs \ar[r]^{h_{\ast}} \ar[d]_{\fs}  &  \Zs \ar[d]^{\gs}    \\
\Ys \ar[r]_{\ks} & \Ws } }
\newcommand{\tos}{\approx}
\newcommand{\tosr}{\stackrel{\rec}{\approx}}
\newcommand{\toss}{\stackrel{\cub}{\approx}}
\newcommand{\vva}{\vec{a}}
\newcommand{\vvb}{\vec{b}}
\begin{document}

\title{Higher order Toda brackets}
\author{Azez Kharouf}

\maketitle
%

\begin{abstract}
We describe two ways to define higher order Toda brackets in a pointed simplicial model category $\Dd$:
one is a recursive definition using model categorical constructions, and the second uses the
associated simplicial enrichment. We show that these two definitions agree, by providing
a third, diagrammatic, description of the Toda bracket, and explain how it serves as the obstruction to
rectifying a certain homotopy-commutative diagram in $\Dd$.
\end{abstract}

\section*{Introduction} \label{cint}
The Toda bracket is an operation on homotopy classes of maps first defined by H.~Toda in \cite{TodG,TodC}
(see \S \ref{def 1 of 2}).
Toda used this construction to compute homotopy groups of spheres; Adams later showed (in \cite{AdHI}) that it
can also be used to calculate differentials in spectral sequences (see also \cite{HarpSC}).
The original definition was later extended to longer Toda brackets (see, e.g., \cite{GWalkL}).
In stable model categories, this can be done in several equivalent ways (see \cite{JCohDS,KocU,CFranH}) .

\begin{mysubsection}{The classical Toda bracket}\label{def 1 of 2}
Given maps \w{\xfygzhw} in a pointed model category $\Dd$, with nullhomotopies \w{F :CX \to Z } for \w{ g\circ f } and \w{G :CY \to W} for
\w[,] {h\circ g} from the pushout property in the diagram:
$$
\xymatrix@R=15pt {  X \ar@{^{(}->}[d]^{} \ar@{^{(}->}[r]^{}  & CX \ar@/^{1.5pc}/[ddr]^{h\circ F} \ar[d]   \\
CX \ar@/_{1.5pc}/[drr]_{G\circ Cf} \ar[r]  & \Sigma{X} \ar@{-->}[dr]^{T}  \\
& & W}
$$
The \emph{Toda bracket} is the induced map \w[.]{T:\Sigma X \to W}
\end{mysubsection}

\begin{mysubsection}{Alternative approaches}
More generally, higher homotopy operations have been defined abstractly (see \cite{SpanS,SpanH} and \cite{KlauT,MaunC}),
using two main approaches: the original definition for topological spaces generalizes to any pointed model
category (see \cite{BJTurnHA,BJTurnC,BBSenD}), while an alternative construction uses a (pointed) cubical enrichment
(see \cite{BJTurnHH,BBGondH}). Since cubically enriched categories are Quillen equivalent to simplicially enriched categories,
it should be possible to extend the second definition to any $(\infty,1)$-category (see \cite{BergnI}).
However, the connection between these two approaches had not been made clear so far.

In this article we study general Toda brackets in any suitable pointed model category $\Dd$. These can be defined in terms
of standard model-category constructions (stated here in terms of homotopy cofibers, although the translation to the
Eckmann-Hilton duals in terms of homotopy fibers is straightforward \wh see \cite{BJTurnC}). We call this the \emph{recursive} approach,
since the constructions used at the $n$-th stage depend on the previous stage (in fact, on the coherent
vanishing of all lower Toda brackets). We show that the vanishing of the Toda brackets so defined is the (last) obstruction
to lifting the corresponding "chain complex" from \w{\ho\Dd} to $\Dd$ (that is, making the successive composites strictly zero) \wh
see Theorem \ref{reduced rectifiyng} below and \cite{BBSenD}.

By \cite{DKanF}, every model category can be endowed with a simplicial enrichment having the same homotopy category;
in a simplicial model category, this can be done in a straightforward manner (see \cite[II, \S 2]{QuiH}).
Moreover, such a simplicial enrichment translates directly into a cubical enrichment, and conversely
(see, e.g., \cite{BJTurnHH}). In this paper, the \emph{cubical} approach to the general Toda brackets is not stated explicitly
in terms of a cubical enrichment, as in \cite{BBGondH}, but rather in terms of the simplicial
structure on $\Dd$ (in the sense of \cite[II, \S 1]{QuiH}).

Our main result is that these two approaches yield equivalent notions of higher Toda brackets
(see Theorems \ref{def1=def2} and \ref{Rev def1=def2}). To show this, we give a third \emph{diagrammatic} description of Toda brackets.
For example, the information needed for the construction in \S \ref{def 1 of 2}  can be encoded by the diagram:
\mydiagram[\label{eqfirstoda}]{ X  \ar@{^{(}->}[d] \ar[r]^{f}  & Y \ar[d]^{g} \ar@{^{(}->}[r]& CY \ar[d]^{G} \\
CX \ar[r]^{F}  & Z \ar[r]^{h}  & W. }
We think of this as a sequence of two horizontal maps of vertical $1$-cubes (in the general case, we will have two maps of $n$-cubes),
from which we obtain the associated Toda bracket by suitable homotopy colimits.
\end{mysubsection}

\begin{notation}\label{cone defn}
Throughout this paper $\Dd$ will denote a pointed proper simplicial model category (see \cite[II \S 2]{QuiH}).

The \emph{cone} of an object \w[,]{X\in\Dd} denoted by \w[,]{CX} is the (strict) cofiber
of \w[,]{i^X_{1}:X\to X \otimes I}
where  \w{X \otimes I} is the functorial cylinder object provided by the simplicial structure, we define
 \w{i^X:X \to CX} as in the following diagram:
$$
\xymatrix@R=15pt @C=30pt {
X \ar[d] \ar[r]^{i^X_1} & X \otimes I \ar[d]_{l^X} & X \ar[l]_{i_0^X} \ar[ld]^{i^X} \\
\ast \ar[r] & CX
}
$$
For all \w{m \in \nn} we set \w[,]{C^m X:=C(C^{m-1}X)} where \w[.]{C^0X:=X}

The \emph{cofiber} of any map \w{f:X \to Y} \w{\cof(f)} is the pushout of \w[,]{\ast\leftarrow X \xrightarrow{f} Y} with structure map \w[.]{r^f:Y\to\cof(f)}
Our standard model for the homotopy cofiber of a map \w[]{f:X \to Y} is the pushout of
\xymatrix@R=25pt { CX     & X \ar[r]^{f} \ar@{_{(}->}[l]^{}  & Y ,}
which we denote by \w[.]{\hcof(f)}
Functoriality of the pushout defines the natural map \w[,]{\zeta_f:\hcof(f)\to\cof(f)}
and if \w{f :X \to Y} is a cofibration, then right properness of the model category $\Dd$ implies that
\w{\zeta_f : \hcof(f) \to \cof(f)} is a weak equivalence.

The \emph{suspension} of \w[,]{X\in\Dd} denoted by \w[,]{\Sigma X} is defined to be the pushout of
\w{CX \xleftarrow{i^X}X\xrightarrow{i^X}CX} (that is, \w[).]{\hcof(i^X)}
Another version of the suspension of $X$ is the pushout of \w{\ast \leftarrow X\xrightarrow{i^X}CX}
(that is, \w[),]{\cof(i^X)} which we denote by \w[.]{\eex}

We have a natural weak equivalence \w[.]{\zeta_{X}:=\zeta_{i^X} : \ex \to \eex }
\end{notation}

\begin{mysubsection}{Organization}
In Section  \ref{OrdToda} we provide a recursive definition of the general Toda bracket of length $n$ in
a pointed model category $\Dd$, and of the data needed to define it (called a \emph{recursive Toda system}).
We then show that these Toda brackets serve as the last obstruction to rectifying certain diagrams.
In Section \ref{GTb} we give a new definition of the higher Toda bracket (and the data required, called a \emph{cubical Toda system})
in terms of the cubical (or simplicial) enrichment in $\Dd$ (as in \cite{BBGondH}). We then reinterpret this data in terms of certain
cubically-shaped diagrams, and show that the new Toda brackets can also be described in terms of certain homotopy colimits of the diagrams.
In Section \ref{DdTB} we  provide a diagrammatic description of the recursive construction, too, and use it to show how to pass from the
cubical to the recursive definitions of higher Toda brackets (see Theorem \ref{def1=def2}). In Section \ref{PRCD} we show how to pass from the recursive to the cubical constructions, yielding Theorem \ref{Rev def1=def2}.
\end{mysubsection}

\begin{ack}
This paper contains results from my Ph.D.\ thesis at the University of Haifa,  supported by Israel Science
Foundation grant 770/16 of my advisor, David Blanc.
\end{ack}

\sect{Model categorical approach to general Toda brackets} \label{OrdToda}
In this section we give the main (recursive) definition of the general Toda bracket. We then show how it
is related to the rectification of linear diagrams.

\supsect{\protect{\ref{OrdToda}}.A}{Ordinary Toda Bracket in terms of homotopy cofiber}

In order to understand better the definition of the Toda bracket, we require the following auxiliary constructions:

\begin{defn}\label{alphabeta}
Given two maps \w{\xfygz} in $\Dd$ with \w{g\circ f} nullhomotopic, for any
choice of nullhomotopy \w[,]{F : CX \to Z} from the commutative diagrams
\mysdiag[\label{eqnalpha}]{  X  \ar@{^{(}->}[d]^{} \ar@{^{(}->}[r]^{} \ar[rrd]^>>>>>>>{f} & CX \ar[d] \ar[rrd]^{F} & &  &&    X \ar@{^{(}->}[d]^{} \ar[r]^{f}  & Y \ar@/^{1.5pc}/[ddr]^{g} \ar[d]  \\
CX \ar[r] \ar[rrd]_>>>>>>>>>>>{Cf} & \ex \ar@{..>}[rrd]_<<<<<{\alpha}  & Y  \ar@{^{(}->}[d]^{} \ar[r]_{g}  &  Z \ar[d] &&  CX \ar@/_{1.5pc}/[drr]_{F} \ar[r]  & \hcof(f) \ar@{-->}[dr]^{\beta}  \\
& & CY \ar[r]  & \hcof(g) && & & Z }
we get the maps \w[]{\alpha = \alpha(f,g,F) : \Sigma X \to \hcof(g)}
and \w[.]{\beta = \beta(f,g,F) : \hcof(f) \to Z}
\end{defn}

\begin{defn}\label{perook 1}
Given \w{f,g,h,F,G} as in \S \ref{def 1 of 2}, note that the map $T$ defined there is just \w[.]{\beta(g,h,G) \circ \alpha(f,g,F)}
The collection of all homotopy classes of such maps $T$ (as $F$ and $G$ vary) is called the \emph{Toda bracket}, and traditionally
denoted by \w[.]{\lra{f,g,h}} The map $T$, denoted by \w[,]{\lra{f,g,h,(F,G)}} is called a \emph{value} of this Toda bracket.
\end{defn}

\begin{defn}\label{ealphabeta}
For \w{\xfygz} and \w{F : CX \to Z} as above, by functoriality of the strict cofiber we obtain maps \w{\widetilde{\alpha}(f,g,F)} and \w{\widetilde{\beta}(f,g,F)}
making the following diagram commute:
\mydiagram[\label{eqalphat}]{ X  \ar@{^{(}->}[d]_{i^X} \ar[r]^{f}  & Y \ar[d]^{g} \ar[r]& \cof(f) \ar[d]^>>>>{\widetilde{\beta}(f,g,F)} \\
CX \ar[r]^{F} \ar[d]  & Z \ar[r] \ar[d]  &  \cof(F)   \\
\eex \ar[r]^{\widetilde{\alpha}(f,g,F)}  &   \cof(g) }
\end{defn}

The \w{3\times3} Lemma then implies:

\begin{lemma}\label{cof(a)=cof(b)}
Given maps \w[,]{\xfygz}  with nullhomotopy \w[]{F} as in definition \ref{alphabeta},
$$
\cof(\widetilde{\alpha}(f,g,F))=\cof(\widetilde{\beta}(f,g,F))
$$
\end{lemma}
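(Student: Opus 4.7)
The plan is to identify both $\cof(\widetilde{\alpha}(f,g,F))$ and $\cof(\widetilde{\beta}(f,g,F))$ as the \emph{total (iterated) cofiber} of the single commutative square
\[
\xymatrix@R=15pt@C=20pt{
X \ar@{^{(}->}[d]_{i^X} \ar[r]^{f} & Y \ar[d]^{g} \\
CX \ar[r]^{F} & Z,
}
\]
computed in two different orders, and then invoke the $3\times 3$ lemma (equivalently, commutativity of iterated pushouts).

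First I would read off how the diagram \eqref{eqalphat} is produced from this square. Taking strict cofibers of the two horizontal maps $f$ and $F$ gives the right-hand column $\cof(f)\to\cof(F)$, and by functoriality of the cofiber this induced map is precisely $\widetilde{\beta}(f,g,F)$; hence $\cof(\widetilde{\beta}(f,g,F))$ is the pushout of $\ast\leftarrow\cof(f)\to\cof(F)$, i.e.\ the cofiber of the right column. Symmetrically, taking strict cofibers of the two vertical maps $i^X$ and $g$ gives the bottom row $\eex\to\cof(g)$, and this induced map is $\widetilde{\alpha}(f,g,F)$; hence $\cof(\widetilde{\alpha}(f,g,F))$ is the pushout of $\ast\leftarrow\eex\to\cof(g)$, i.e.\ the cofiber of the bottom row.

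Next I would apply the $3\times 3$ lemma: the iterated pushout of the $2\times 2$ square above (first horizontally, then vertically, versus first vertically, then horizontally) is the same object, up to the canonical isomorphism of colimits. Concretely, both $\cof(\widetilde{\alpha}(f,g,F))$ and $\cof(\widetilde{\beta}(f,g,F))$ are naturally identified with the colimit of the punctured $3\times 3$ diagram obtained by appending two basepoints $\ast$ to the square (one along the top-right and one along the bottom-left), and that colimit is computed by the outer corner of the completed $3\times 3$ grid in either order. Matching the two identifications along the universal property gives the desired equality.

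The only mild obstacle is bookkeeping: one must verify that the maps produced by functoriality of the cofiber in the two orders really do match the definitions of $\widetilde{\alpha}$ and $\widetilde{\beta}$ given in \eqref{eqnalpha} and \eqref{eqalphat}, and that the resulting two presentations of the iterated colimit agree on the nose (not merely up to a non-canonical isomorphism). This is routine once the $3\times 3$ diagram is drawn with all nine objects, but it is the step where a careless sign or orientation would cause trouble, so I would be careful to keep track of which map plays the role of the left-vertical $i^X$ versus the top-horizontal $f$ throughout.
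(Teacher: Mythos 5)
Your proposal is correct and takes essentially the same route as the paper: the paper states the lemma immediately after the sentence ``The $3\times 3$ Lemma then implies,'' and the intended proof is exactly your identification of both $\cof(\widetilde{\alpha})$ and $\cof(\widetilde{\beta})$ as the bottom-right corner of the $3\times3$ grid built from the square $\bigl(X\to Y,\ i^X\colon X\to CX,\ F\colon CX\to Z,\ g\colon Y\to Z\bigr)$, computed by taking cofibers first along rows and then columns versus the other order.
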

\begin{lemma}\label{equalalpha}
Given \w{\xfygz} and a nullhomotopy \w{F : CX \to Z} for \w{g\circ f} as above, we get the following commutative squares:
\mydiagram[\label{abconn}]{
\Sigma X \ar[rr]^{\alpha(f,g,F)} \ar[d]^{\zeta_X}_{\simeq} && \hcof(g) \ar[d]^{\zeta_g}  &  \hcof(f) \ar[d]^{\zeta_f} \ar[rr]^{\beta(f,g,F)}
&& Z \ar[d]^{r^{F}}\\
\eex \ar[rr]^{\widetilde{\alpha}(f,g,F)} && \cof(g) &  \cof(f) \ar[rr]^{\widetilde{\beta}(f,g,F)} && \cof(F)    }
If $f$, $g$, and $F$ are cofibrations, the vertical maps in \wref{abconn} are weak equivalences.
\end{lemma}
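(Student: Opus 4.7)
The plan is to dispose of the two assertions of the lemma---commutativity of the two squares and the weak-equivalence claim for their vertical maps---separately. Both $\alpha(f,g,F)$ and $\widetilde{\alpha}(f,g,F)$ are induced, via the universal property of a pushout, by the same data out of diagrams that differ only in replacing one copy of $CX$ (respectively $CY$) by $\ast$; the comparison maps $\zeta_X$ and $\zeta_g$ are the natural maps produced by exactly these collapses. I would therefore assemble a single commutative cube whose back face is the pushout diagram defining $\Sigma X = \hcof(i^X)$, whose front face is the pushout diagram defining $\widetilde{\Sigma} X = \cof(i^X)$, and whose horizontal maps into $\hcof(g)$ and $\cof(g)$ are induced by $Cf$, $F$, and $g$. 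The functoriality of pushouts then forces $\zeta_g \circ \alpha(f,g,F) = \widetilde{\alpha}(f,g,F) \circ \zeta_X$. The right-hand square is handled identically, comparing the pushouts $\hcof(f) = CX \cup_X Y$ and $\cof(f) = Y/X$ and the targets $Z$ and $\cof(F) = Z/F(CX)$, with $r^F$ playing the role of the cone-collapse on the target.

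For the weak-equivalence part, three of the four vertical maps are immediate from the principle stated in the notation section: $\zeta_h$ is a weak equivalence whenever $h$ is a cofibration (a consequence of right properness). Applied to $i^X$---which is a cofibration by construction from the simplicial cylinder---and to the hypothesized cofibrations $f$ and $g$, this yields the claim for $\zeta_X$, $\zeta_f$, and $\zeta_g$. The remaining map $r^F : Z \to \cof(F)$ requires a separate argument. Since $F$ is a cofibration by hypothesis and $CX$ is contractible, the map $CX \to \ast$ is a weak equivalence between cofibrant objects; the strict cofiber $\cof(F)$ is the pushout of $\ast \leftarrow CX \xrightarrow{F} Z$, and $r^F$ is the pushout of the weak equivalence $CX \to \ast$ along the cofibration $F$, hence itself a weak equivalence by left properness (the gluing lemma).

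The principal obstacle I anticipate is purely bookkeeping: assembling the cube cleanly enough that the induced maps between pushouts are manifestly the ones defined in Definitions \ref{alphabeta} and \ref{ealphabeta}, rather than some a priori different maps with the same universal property. Once the diagrams are in place, each commutativity claim reduces to a short diagram chase, and each weak-equivalence claim follows from one application of a standard properness argument.
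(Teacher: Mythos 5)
The paper states Lemma \ref{equalalpha} without proof, so there is no argument in the text to compare against; your proposal supplies what the author evidently regarded as routine, and it is correct. The commutativity argument is exactly right: $\alpha(f,g,F)$, $\widetilde{\alpha}(f,g,F)$, $\zeta_X$, and $\zeta_g$ are all induced maps on pushouts of spans, and the relevant square of spans (with legs $(Cf,f,F)$ across the top, $(\ast,f,F)$ across the bottom, and collapse maps $(\ast,\mathrm{id},\mathrm{id})$ vertically) manifestly commutes; functoriality of colimits then gives $\zeta_g \circ \alpha = \widetilde{\alpha}\circ\zeta_X$, and the $\beta$ square works the same way once one notes that $\beta(f,g,F)\colon\hcof(f)\to Z$ is the map of pushouts induced by $(\mathrm{id}_{CX}, i^X, g)\colon (CX\leftarrow X\to Y)\to(CX \xleftarrow{=} CX\xrightarrow{F} Z)$. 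The weak-equivalence claims are also handled correctly: $\zeta_X=\zeta_{i^X}$, $\zeta_f$, $\zeta_g$ follow from the right-properness principle already recorded in Notation \ref{cone defn}, and your identification of $r^F$ as the pushout of the weak equivalence $CX\to\ast$ along the cofibration $F$, hence a weak equivalence by left properness, is the correct and minimal argument (you do not actually need $CX$ and $\ast$ to be cofibrant for this; left properness only requires the cofibration leg).
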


\begin{defn}\label{def 2 of 2}
Given \w{f,g,h,F,G} as in \S \ref{def 1 of 2}, where all maps and nullhomotopies are cofibrations, we
define \w[,]{\widetilde{T}=\llra{f,g,h,(F,G)}:=\bn(g,h,G) \circ \an(f,g,F)}
\end{defn}
\begin{prop}\label{def 1 = def 2 of 2}
Given \w{f,g,h,F,G} as in \S \ref{def 2 of 2}, we have a commutative diagram:
$$
\xymatrix@R=15pt @C=35pt{
\Sigma X \ar[rr]^{\lra{f,g,h,(F,G)}} \ar[d]^>>>{\simeq}_>>>{\zeta_X} && W \ar[d]_>>>{\simeq}^>>>>{r^{G}} \\
\widetilde{\Sigma} X \ar[rr]^{\lra{\lra{f,g,h,(F,G)}}} && \cof(G)    }
$$
In particular \w[.]{\lra{f,g,h,(F,G)} \tos\llra{f,g,h,(F,G)}}
\end{prop}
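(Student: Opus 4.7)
The plan is to prove this as a straightforward pasting of the two commuting squares supplied by Lemma~\ref{equalalpha}, one for the $\alpha$-half and one for the $\beta$-half of the Toda bracket. Concretely, recall from Definition~\ref{perook 1} that $\langle f,g,h,(F,G)\rangle = \beta(g,h,G)\circ\alpha(f,g,F)$, and from Definition~\ref{def 2 of 2} that $\langle\langle f,g,h,(F,G)\rangle\rangle = \widetilde{\beta}(g,h,G)\circ\widetilde{\alpha}(f,g,F)$. Thus to verify the displayed square it suffices to exhibit the middle object $\hcof(g)$ (resp.\ $\cof(g)$) as a waypoint and check that both resulting sub-squares commute.

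First I would apply the left square of \eqref{abconn} to the triple $(f,g,F)$, which yields
$$\zeta_g\circ\alpha(f,g,F)=\widetilde{\alpha}(f,g,F)\circ\zeta_X.$$
Next, I would apply the right square of \eqref{abconn} to the triple $(g,h,G)$ (this is legitimate since $h\circ g$ is nullhomotopic via $G$, and $g$ plays the role of the cofibration being nullified), obtaining
$$r^G\circ\beta(g,h,G)=\widetilde{\beta}(g,h,G)\circ\zeta_g.$$
Pasting these two squares along the common edge $\zeta_g:\hcof(g)\to\cof(g)$ gives
$$r^G\circ\beta(g,h,G)\circ\alpha(f,g,F)=\widetilde{\beta}(g,h,G)\circ\zeta_g\circ\alpha(f,g,F)=\widetilde{\beta}(g,h,G)\circ\widetilde{\alpha}(f,g,F)\circ\zeta_X,$$
which is exactly the commutativity claimed in the proposition.

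For the ``in particular'' clause, I would invoke the cofibrancy hypothesis of Definition~\ref{def 2 of 2}: $\zeta_X=\zeta_{i^X}$ is always a weak equivalence by Notation~\ref{cone defn} (since $i^X$ is a cofibration by construction of the cone), and $r^G:W\to\cof(G)$ is a weak equivalence because $G$ is a cofibration whose domain $CY$ is contractible (apply the gluing lemma to the span $\ast\leftarrow CY\xra{G} W$ compared with $\ast\leftarrow\ast\to W$). Together with the commutative square, this forces $\langle f,g,h,(F,G)\rangle\approx\langle\langle f,g,h,(F,G)\rangle\rangle$.

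The only potential obstacle is bookkeeping: one must be careful that the instance of Lemma~\ref{equalalpha} applied to $(g,h,G)$ indeed produces $r^G$ and $\widetilde{\beta}(g,h,G)$ as stated (i.e.\ that the roles of $f,g,F$ in the lemma are being played by $g,h,G$), and that $\zeta_g$ is the same map on the right edge of the first square and the left edge of the second. Both are immediate from the naturality built into Definitions~\ref{alphabeta} and~\ref{ealphabeta}, so no genuine difficulty arises; the proof is essentially a diagram chase.
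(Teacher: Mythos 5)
Your proof is correct and matches the paper's own argument exactly: the paper's proof is simply ``By Lemma~\ref{equalalpha} and the composition of squares as in \eqref{abconn},'' which is precisely the pasting of the $\alpha$-square for $(f,g,F)$ with the $\beta$-square for $(g,h,G)$ along the common edge $\zeta_g$ that you carry out. Your extra remarks on why $\zeta_X$ and $r^G$ are weak equivalences just unfold the last sentence of Lemma~\ref{equalalpha} (all maps and nullhomotopies being cofibrations by the hypothesis of Definition~\ref{def 2 of 2}), so nothing is missing or superfluous.
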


\begin{proof}
By Lemma \ref{equalalpha} and the composition of squares as in \wref[.]{abconn}
\end{proof}

\supsect{\protect{\ref{OrdToda}}.B}{Recursive definition of general Toda brackets}

We now define the general Toda bracket, in the spirit of Definition \ref{def 2 of 2}.
See also \cite[\S 5]{CFranH} and the sources cited there for a recursive definition in stable model categories.

\begin{defn}\label{rprojmodst}
If \w{\mathcal{I}} is a finite indexing category, \w{\Dd^{\mathcal{I}}} has two model category structures (injective and projective),
but both have the same weak equivalences, defined object-wise. Thus two diagrams $X$ and $Y$ are weakly equivalent (written \w[)]{X\tos Y}
if they are connected by a zigzag of weak equivalences in \w[.]{\Dd^{\mathcal{I}}}

We shall be interested in the \emph{projective model structure}, in which the fibrations are also
defined objectwise (see \cite[Theorem 5.1.3]{HovM}). The cofibrations are more complicated to describe, in general. However, when $\mathcal{I}$
is a partially ordered set filtered by subcategories \w{F\sb{0}\subset F\sb{1}\subset\dotsc F\sb{n}=\mathcal{I}} with all non-identity arrows strictly increasing
filtration, a cofibrant diagram $\Ac$ in \w{\Dd\sp{\In{}{}}} may be described recursively by requiring that:
\begin{enumerate}
    \item $\Ac(i)$ be cofibrant in $\Dd$ for each object \w[;]{i\in F\sb{0}}
    \item Assuming by induction that \w{\Ac|\sb{F\sb{k}}} is cofibrant, we require that the map from the colimit of \w{\Ac|\sb{F\sb{k}}}
    filtration $k$ to each object in \w{F\sb{k+1}\setminus F\sb{k}} be a cofibration.
\end{enumerate}
Such a diagram will be called \emph{strongly cofibrant}.
\end{defn}

\begin{example}\label{cofsqu}
The outer commutative square in

\mykkdiag[\label{eqstcofsq}]{
 X \ar@{^{(}->}[rrr]^{h} \ar@{^{(}->}[ddd]^{f} &&& Z \ar@{^{(}->}[ddd]^{g} \ar[ldd]_{s} \\
 && \\
&& P \ar@{^{(}->}[rd]^{a} & \\
Y \ar[urr]^{t}  \ar@{^{(}->}[rrr]^{k} &&& W
}
is strongly cofibrant if and only if $X$ is cofibrant, the maps $f$ and $h$ are cofibrations,
and the induced map $a$ from the pushout $P$ is a cofibration. Thus we see that in particular $g$ and $k$ are also cofibrations.
\end{example}
\begin{lemma}\label{cofibrant square induces cofibration}
If the outer commutative square in \wref{eqstcofsq} is strongly cofibrant,
the induced map from \w{\cof(f) \to \cof(g)} is a cofibration.
\end{lemma}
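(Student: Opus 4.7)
The plan is to reduce the claim to the stability of cofibrations under pushouts, by decomposing the relevant cofiber-squares through the pushout $P$.

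First I would observe that $s:Z\to P$ is the pushout of the cofibration $f:X\to Y$ along $h:X\to Z$, and is hence itself a cofibration. Moreover, pasting the pushout square defining $P$ on top of the pushout square defining $\cof(s)$ exhibits $\cof(s)$ as the pushout of $Y\xleftarrow{f} X\to \ast$, yielding a canonical isomorphism $\cof(f)\cong\cof(s)$, under which the natural map $\cof(s)\to\cof(g)$ corresponds to the induced map $\cof(f)\to\cof(g)$ coming from the original square.

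Next I would split the pushout square defining $\cof(g)$ by inserting $P$ in the top row, using the factorization $g=a\circ s$:
$$
\xymatrix@R=15pt@C=25pt{
Z \ar[r]^{s} \ar[d] & P \ar[r]^{a} \ar[d] & W \ar[d]\\
\ast \ar[r] & \cof(s) \ar[r] & \cof(g)
}
$$
The left square is a pushout by definition of $\cof(s)$, and the outer rectangle is a pushout by definition of $\cof(g)$; the pasting lemma for pushouts then shows that the right square is also a pushout.

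Finally, since $a:P\to W$ is a cofibration by the hypothesis of strong cofibrancy (cf.\ Example \ref{cofsqu}), and cofibrations are stable under pushouts in any model category, the induced map $\cof(s)\to\cof(g)$ is a cofibration. Composing with the isomorphism $\cof(f)\cong\cof(s)$ gives the desired cofibration $\cof(f)\to\cof(g)$. The only point requiring some care is the identification of $\cof(f)$ with $\cof(s)$ via the pasting argument; beyond that, I do not anticipate any real obstacle.
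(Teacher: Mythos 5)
Your argument is correct, and it is the natural way to prove this. The paper states Lemma \ref{cofibrant square induces cofibration} without giving a proof, so there is nothing to compare against, but your reasoning fills the gap cleanly: the key steps — (i) $s\colon Z\to P$ is a pushout of the cofibration $f$ and hence a cofibration, (ii) $\cof(f)\cong\cof(s)$ by pasting the pushout square for $P$ with the one for $\cof(s)$, (iii) the pasting lemma applied to the factorization $g=a\circ s$ exhibits $\cof(s)\to\cof(g)$ as the pushout of the cofibration $a\colon P\to W$ — are all valid, and the identification in (ii) is compatible with the induced map $\cof(f)\to\cof(g)$ since $a\circ t=k$, where $t\colon Y\to P$ is the pushout structure map. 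One small remark: the hypothesis that $h$ is a cofibration (part of strong cofibrancy) is not actually used in your argument, only that $f$ is a cofibration and $a$ is a cofibration; that is consistent with the statement, which is a one-sided conclusion about $\cof(f)\to\cof(g)$.
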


\begin{mysubsection}{Recursive definition of the Toda bracket}\label{Gen defn 2 of 2}
Given a linear diagram \w{\Xs=(\xymatrix{X_1 \ar@{^{(}->}[r]^{f_1}  & X_2 \ar@{^{(}->}[r]^{f_2} & \dots X_{n+2} \ar@{^{(}->}[r]^{f_{n+2}} & X_{n+3}})}  in \w{\mathcal{D}} in which the maps are cofibrations,
and let \w{\Fn{1}{j}} be a nullhomotopy for \w[,]{f_{j+1} \circ f_j}
such that the square:
$$
\xymatrix@R=20pt {  X_j  \ar@{^{(}->}[d]_{i^{X_j}} \ar@{^{(}->}[r]^{f_j}  & X_{j+1} \ar@{^{(}->}[d]^{f_{j+1}} \\
CX_j \ar@{^{(}->}[r]^{\Fn{1}{j}}  & X_{j+2} }
$$
is strongly cofibrant. Let \w{\ann{1}{j}:=\widetilde{\alpha}(f_j,f_{j+1},\Fn{1}{j}): \ee{}{X_j}\hra \cof{(f_{j+1})}}
and \w{\bnn{1}{j}:=\widetilde{\beta}(f_j,f_{j+1},\Fn{1}{j}): \cof{(f_j)} \hookrightarrow \cof(\Fn{1}{j})}
be as in Definition \ref{ealphabeta} \wh so both are cofibrations by Lemma \ref{cofibrant square induces cofibration}.
Finally, assume that for every \w{1 \leq k \leq n-2} and \w{1\leq j \leq n-k+1} we have a strongly cofibrant square
\mydiagram[\label{eqdefalpha}]{  \ee{k}{X_j}  \ar@{^{(}->}[d]_{i^{\ee{k}{X_j}}} \ar@{^{(}->}[r]^{\ann{k}{j}}  & \cof(\bnn{k-1}{j+1}) \ar@{^{(}->}[d]^{\bnn{k}{j+1}} \\
C\ee{k}{X_j} \ar@{^{(}->}[r]^{\Fn{k+1}{j}}  & \cof(\Fn{k}{j+1})  }
(where \w{\Fn{k+1}{j}} is a nullhomotopy for the composition \w[).]{\bnn{k}{j+1} \circ \ann{k}{j}}
We then define:
\begin{myeq}\label{eqalphbet}
\begin{cases}
\ann{k+1}{j}=\widetilde{\alpha}(\ann{k}{j},\bnn{k}{j+1},\Fn{k+1}{j}): \widetilde{\Sigma}^{k+1}{X_j} \hookrightarrow \cof{(\bnn{k}{j+1})}\\
\bnn{k+1}{j}=\widetilde{\beta}(\ann{k}{j},\bnn{k}{j+1},\Fn{k+1}{j}): \cof{(\ann{k}{j})} \hookrightarrow  \cof(\Fn{k+1}{j})
\end{cases}
\end{myeq}
(See Example \ref{examptoda2} below).

Since \w{\cof{(\bnn{k}{j+1})}=\cof{(\ann{k}{j+1})}} by Lemma \ref{cof(a)=cof(b)}, we can ask if the composites
\w{ \bnn{k+1}{j+1} \circ \ann{k+1}{j} } are nullhomotopic. We may therefore assume recursively that we have the strongly cofibrant squares:
$$
\xymatrix@R=25pt {  \ee{n-1}{X_1} \ar@{^{(}->}[d]_{i^{\ee{n-1}{X_1}}} \ar@{^{(}->}[r]^{\ann{n-1}{1}}
& \cof(\bnn{n-2}{2}) \ar@{^{(}->}[d]^{\bnn{n-1}{2}}    &&
\ee{n-1}{X_2} \ar@{^{(}->}[d]_{i^{\ee{n-1}{X_2}}} \ar@{^{(}->}[r]^{\ann{n-1}{2}}
& \cof(\bnn{n-2}{3}) \ar@{^{(}->}[d]^{\bnn{n-1}{3}} \\
C \ee{n-1}{X_1} \ar@{^{(}->}[r]^{\Fn{n}{1}}  & \cof(\Fn{n-1}{2})  &&
C \ee{n-1}{X_2} \ar@{^{(}->}[r]^{\Fn{n}{2}}  & \cof(\Fn{n-1}{3})
}
$$

This allows us to obtain \w{\ann{n}{1}} and \w{\bnn{n}{2}} by Definition \ref{ealphabeta}.
We define the \emph{value} \w{\lra{\lra{f_1,f_2,\dots,f_{n+2},\{\{\Fn{m}{k}\}_{k=1}^{n-m+2}\}_{m=1}^{n}}}}
of the corresponding \emph{recursive Toda bracket} to be the composite
\w[.]{\bnn{n}{2} \circ \ann{n}{1} : \ee{n}{X_1} \to \cof(\Fn{n}{2})}

Note that the maps \w{\ann{k}{j}} and \w{\bnn{k}{j}} depend not only on \w[,]{\widetilde{F}_j^{(k)}} but also on all previous
choices of lower nullhomotopies.
\end{mysubsection}

\begin{example}\label{examptoda2}
Given a linear diagram \w{\xymatrix@R=25pt { X_1 \ar@{^{(}->}[r]^{f_1}  & X_2 \ar@{^{(}->}[r]^{f_2} & X_{3}
\ar@{^{(}->}[r]^{f_{3}} & X_{4} \ar@{^{(}->}[r]^{f_{4}} & X_{5}
}}
of cofibrations, the first two maps \w{f_1} and \w{f_2} yield \w[,]{\ann{1}{1}:\widetilde{\Sigma}{X_1}\to\cof(f_2)}
the next two \w{f_2} and \w{f_3} yield \w{\bnn{1}{2}:\cof(f_2)\to\cof(\widetilde{F}_2^{(1)})} and
\w[,]{\ann{1}{2}:\widetilde{\Sigma}{X_2}\to\cof(f_3)} and so on. Thus the stages in our recursive process consist of:
\begin{enumerate}
\renewcommand{\labelenumi}{\quad~Step \arabic{enumi}.~}
\item \w{\widetilde{\Sigma}{X_1}\stk{\ann{1}{1}}\cof(f_2)\stk{\bnn{1}{2}} \cof(\widetilde{F}_2^{(1)}) \quad \text{and} \quad
\widetilde{\Sigma}{X_2}\stk{\ann{1}{2}}\cof(f_3)\stk{\bnn{1}{3}}   \cof(\widetilde{F}_3^{(1)})}
\item \w{\widetilde{\Sigma}^2{X_1}\stk{\ann{2}{1}}\cof(\bnn{1}{2})=\cof(\ann{1}{2})\stk{\bnn{2}{2}}  \cof(\widetilde{F}_2^{(2)})}
\end{enumerate}
\noindent So the value of the length $4$ recursive Toda bracket we obtain is the composite
$$
\lra{\lra{f_1,f_2,f_3,f_4,\Fn{1}{1},\Fn{1}{2},\Fn{1}{3},\Fn{2}{1},\Fn{2}{2}}} = \bnn{2}{2} \circ \ann{2}{1}
$$
\end{example}

\begin{defn}\label{dredchcx}
Given a linear diagram
\w{\Xs=(\xymatrix{X_1 \ar@{^{(}->}[r]^{f_1}  & X_2 \ar@{^{(}->}[r]^{f_2} & \dots X_{n+2} \ar@{^{(}->}[r]^{f_{n+2}} & X_{n+3}})
}
in $\Dd$ of length \w{n+2} and choices of nullhomotopies
\w{\widetilde{F}\sb{\ast}=\{\{\Fn{m}{k}\}_{k=1}^{n-m+2}\}_{m=1}^{n}}
as in Definition \ref{Gen defn 2 of 2}, we call the system \w{\XFn} an \emph{ $n$-th order recursive Toda system}
and denote \w{\lra{\lra{f_1,f_2,\dots,f_{n+2},\{\{\Fn{m}{k}\}_{k=1}^{n-m+2}\}_{m=1}^{n}}}} by
\w[.]{\Tn{n}{1}=\Tn{n}{1}\XFn}
\end{defn}

\begin{remark}\label{rredchcx}
Even if the maps \w{f_{j}} are not all cofibrations and the squares \wref{eqdefalpha} are not all strongly cofibrant, we can define maps \w{\ann{m}{k}=\ann{m}{k}\XFn} and \w{\bnn{m}{k}=\bnn{m}{k}\XFn} as in
\S \ref{Gen defn 2 of 2} and thus the corresponding recursive Toda brackets \w[]{\Tn{n}{1}=\Tn{n}{1}\XFn}
as above. However, we cannot expect these constructions to be homotopy meaningful in this case.
\end{remark}

%
\supsect{\protect{\ref{OrdToda}}.C}{Rectification of linear diagrams}
We show how the Toda bracket serves as the (last) obstruction
to rectifying certain diagrams in the model category $\Dd$.

\begin{defn}\label{rectify def}
A diagram \w{\Xs=(X_1\stk{f_1} X_2 \dots \dots X_{n-1} \stk{f_{n-1}} X_{n})} in a model category $\Dd$ with \w{f_{j+1}\circ f_{j}\sim\ast} for
each \w{1 \leq j \leq n-2} yields a chain complex in \w[.]{\ho\Dd} A \emph{rectification} of \w{\Xs} is a lifting of this chain complex to $\Dd$:
in other words, a diagram \w{\Xs'=(X'_1\stk{f'_1} X'_2\dots X'_{n-1} \stk{f'_{n-1}} X'_{n})} in $\Dd$, with \w[,]{\Xs'\tos\Xs} such that
\w{f'_{j+1}\circ f'_{j} = \ast} for every \w[.]{1 \leq j \leq n-2}
If such a rectification exists we say the original diagram is \emph{rectifiable}.
\end{defn}

\begin{thm}\label{reduced rectifiyng}
Given an $n$-th order recursive Toda system \w{\XFn} of length \w[,]{n+2} if \w{\Tn{n}{1}\XFn} is nullhomotopic, then
the underlying linear diagram \w{\Xs} is rectifiable.
\end{thm}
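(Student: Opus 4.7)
The plan is to proceed by induction on $n$, the order of the recursive Toda system, progressively absorbing the nullhomotopies $\Fn{m}{k}$ into the maps themselves via standard mapping-cylinder / pushout replacements.

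For the base case $n=1$: given three cofibrations $f_1,f_2,f_3$, nullhomotopies $\Fn{1}{1},\Fn{1}{2}$, and a chosen nullhomotopy $H$ of $\Tn{1}{1}=\bnn{1}{2}\circ\ann{1}{1}$, I would construct $\Xs'$ by pushout replacements that absorb $\Fn{1}{1},\Fn{1}{2}$ and $H$. Concretely, modify $X_3$ using $\Fn{1}{1}$ so the new $f'_2\circ f'_1$ becomes strictly zero (factoring through a cone), and modify $X_4$ using $\Fn{1}{2}$ together with $H$ so that $f'_3\circ f'_2=\ast$; the role of $H$ is precisely to make the second modification compatible with the first. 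Strong cofibrancy of the defining squares then forces the objectwise comparison $\Xs'\to\Xs$ to be a weak equivalence, yielding the rectification.

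For the inductive step, assume the theorem for systems of order less than $n$. The truncation of $\XFn$ to the first $n+1$ maps, together with the sub-family $\{\Fn{m}{k}:m\le n-1\}$, is an $(n-1)$-th order recursive Toda system whose top bracket $\Tn{n-1}{1}=\bnn{n-1}{2}\circ\ann{n-1}{1}$ admits $\Fn{n}{1}$ as an explicit nullhomotopy. By induction this truncated system rectifies to some $(X_1\to X'_2\to\cdots\to X'_{n+2})$. Now use $\Fn{n}{2}$ together with the chosen nullhomotopy of $\Tn{n}{1}$ as coherence data to extend this rectification by one more step, producing $X'_{n+3}$ and $f'_{n+2}$ with $f'_{n+2}\circ f'_{n+1}=\ast$ strictly. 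Weak equivalence to $\Xs$ is preserved because each extension is a pushout along a cofibration whose source is contractible.

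The main obstacle will be the compatibility check in the inductive step: one must verify that extending by $f'_{n+2}$ neither disturbs the earlier strict vanishings nor breaks the weak equivalence $\Xs'\tos\Xs$, and that the nullhomotopy of $\Tn{n}{1}$ is exactly the missing pushout datum in the cofibre tower already built by the induction. The cleanest formulation is a strengthened inductive hypothesis that carries not only the rectified diagram but also all the intermediate maps $\ann{m}{k},\bnn{m}{k}$ and their cofibres, so that the inductive step reduces to a formal pushout computation. The strong cofibrancy hypothesis together with the $3\times 3$ lemma and Lemma \ref{cof(a)=cof(b)} should then suffice to identify the relevant cofibres across the tower and conclude.
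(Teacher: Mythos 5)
Your underlying idea — absorb the nullhomotopies into the maps by passing to cofibers so that consecutive composites become \emph{strictly} zero, then use the nullhomotopy of the top bracket for the last map — is the right one, and the paper's argument can be read as an unrolled version of your proposed induction. However, as written the proposal has a genuine gap precisely where you flag it. With the bare statement of the theorem as the inductive hypothesis, the inductive step cannot be carried out: knowing only that the truncated $(n-1)$-th order system ``is rectifiable'' gives no control over what the rectification looks like, so there is no way to splice on a last map. You need to know that the rectified tail terminates in a specific object (namely $\cof(\Fn{n-1}{1})$) by a specific map (namely $\bnn{n-1}{1}\circ r^{\bnn{n-2}{1}}$) before the extension step makes sense. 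You say the ``cleanest formulation'' would carry this extra data along, but you neither state the strengthened hypothesis nor verify it, and without it the argument does not close.

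What the paper actually does is bypass the induction and exhibit the rectification explicitly. Assembling the cofibration sequences $\cof(\ann{k-1}{1})\hra\cof(\Fn{k}{1})\xra{r^{\bnn{k}{1}}}\cof(\ann{k}{1})$ supplied by the recursive Toda system, together with the weak equivalences obtained by collapsing contractible cones, one shows $\Xs$ is $\tos$-equivalent to
$$
X_1\xra{f_1}X_2\xra{\bnn{1}{1}\circ r^{f_1}}\cof(\Fn{1}{1})\xra{\bnn{2}{1}\circ r^{\bnn{1}{1}}}\cdots\xra{\bnn{n}{1}\circ r^{\bnn{n-1}{1}}}\cof(\Fn{n}{1}),
$$
and here consecutive composites vanish on the nose because $r^{\bnn{k}{1}}\circ\bnn{k}{1}=\ast$. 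The nullhomotopy of $\Tn{n}{1}=\bnn{n}{2}\circ\ann{n}{1}$ is then used exactly once, at the end: it produces an extension $s:\cof(\ann{n}{1})\to\cof(\Fn{n}{2})$ of $\bnn{n}{2}$ across the cofiber, and $s\circ r^{\bnn{n}{1}}$ is the replacement for $f_{n+2}$, with composite again strictly zero since $r^{\bnn{n}{1}}\circ\bnn{n}{1}=\ast$. If you want to keep the inductive framing you must build exactly this description of the rectified tail into the inductive hypothesis; merely citing ``rectifiability'' is not enough. Also note that your justification of $\Xs'\tos\Xs$ (``each extension is a pushout along a cofibration whose source is contractible'') is not quite the mechanism: the weak equivalences are the quotient maps that kill contractible cones inside the cofiber tower, and identifying them requires the large commuting ladder the paper writes out, not just local pushout reasoning.
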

\begin{proof}
For each $k$ and $j$, \w{\XFn} provides a commuting diagram with horizontal cofibration sequences:
\mydiagram[\label{eqdfalpha}]{\ee{k}{X_j}  \ar@{^{(}->}[d]_{i^{\ee{k}{X_j}}} \ar@{^{(}->}[rr]^{\ann{k}{j}}  &&
\cof(\bnn{k-1}{j+1}) \ar@{^{(}->}[d]^{\bnn{k}{j+1}}\ar[rr]^{r^{\ann{k}{j}}}  && \cof(\ann{k}{j})\ar[d]^{\bnn{k+1}{j}}\\
C\ee{k}{X_j} \ar@{^{(}->}[rr]^{\Fn{k+1}{j}}  && \cof(\Fn{k}{j+1})  \ar[rr]^{r^{\Fn{k+1}{j}}}_{\simeq} && \cof(\Fn{k+1}{j})
}
Since \w{\cof{(\bnn{k-1}{j+1})}=\cof{(\ann{k-1}{j+1})}} by Lemma \ref{cof(a)=cof(b)}, flipping the second square along the diagonal yields
a commutative diagram:

$$
%
%
\xymatrix@R=20pt @C=12pt {  & & & & & X_{n+1} \ar@{^{(}->}[r]^{f_{n+1}} \ar[d]_{r^{f_n}}
& X_{n+2} \ar[d]^{\simeq}    \\
%
%
& & & & & \cof(f_n) \ar@{^{(}->}[r]^{\bnn{1}{n}} \ar@{.}[d]
& \cof(\Fn{1}{n}) \ar@{.}[d]    \\
%
%
%
& & & &    X_4 \ar[d] \ar@{.}[r] & \cof(\ann{n-4}{4}) \ar@{^{(}->}[r]^>>{\bnn{n-3}{4}} \ar[d]_{r^{\ann{n-3}{3}}} & \cof(\Fn{n-3}{4}) \ar[d]^{\simeq}   \\
%
%
& & X_3 \ar[d]  \ar@{^{(}->}[r]^{f_3}   & X_4 \ar@{=}[ru] \ar[d]^{\simeq}  \ar[r] & \cof(f_3) \ar[d] \ar@{.}[r]
&  \cof(\ann{n-3}{3}) \ar@{^{(}->}[r]^>>{\bnn{n-2}{3}} \ar[d]_{r^{\ann{n-2}{2}}} & \cof(\Fn{n-2}{3}) \ar[d]^{\simeq} \\
%
%
X_2 \ar[d]_{r^{f_1}} \ar@{^{(}->}[r]^{f_2}   & X_3 \ar@{=}[ru] \ar[d]^{\simeq} \ar[r] & \cof(f_2)
\ar[d]^{} \ar@{^{(}->}[r]^{\bnn{1}{2}}
& \cof(\Fn{1}{2}) \ar[r] \ar[d]^{\simeq}  & \cof(\ann{1}{2}) \ar[d]  \ar@{.}[r]
& \cof(\ann{n-2}{2}) \ar@{^{(}->}[r]^>>{\bnn{n-1}{2}} \ar[d]_{r^{\ann{n-1}{1}}} & \cof(\Fn{n-1}{2}) \ar[d]^{\simeq} \\
%
%
\cof(f_1)  \ar@{^{(}->}[r]^{\bnn{1}{1}} & \cof(\Fn{1}{1}) \ar[r]^{r^{\bnn{1}{1}}} & \cof(\ann{1}{1})\ar@{^{(}->}[r]^{\bnn{2}{1}}
& \cof(\Fn{2}{1}) \ar[r]^{r^{\bnn{2}{1}}} &  \cof(\ann{2}{1})  \ar@{..}[r]
& \cof(\ann{n-1}{1}) \ar[r]^>>>{\bnn{n}{1}}  & \cof(\Fn{n}{1})} \\
$$
Thus (up to the relation $\tos$) we may replace the initial segment of \w{\Xs} (without \w[)]{f_{n+2}} by:
\myrdiag[\label{rectify seq}]{  X_1 \ar[r]^{f_1} & X_2 \ar[r]^{ \bnn{1}{1} \circ r^{f_1}} &
 \cof(\Fn{1}{1}) \ar[r]^>>>>{\bnn{2}{1} \circ r^{\bnn{1}{1}}}
&\dots \ar[r] & \cof(\Fn{n-1}{1}) \ar[r]^>>>>{\bnn{n}{1} \circ r^{\bnn{n-1}{1}} }  &
\cof(\Fn{n}{1})
}
Note that we have the following commutative diagram:
\mydiagram[\label{diag of rectify}]{
\ee{n-1}{X_1} \ar@{^{(}->}[d]_{\ann{n-1}{1}} \ar@{^{(}->}[r]^{}  & C\ee{n-1}{X_1}
\ar@{^{(}->}[d]^{\Fn{n}{1}} \ar[r] &
\ee{n}{X_1} \ar@{^{(}->}[d]^{\ann{n}{1}} \ar[rd]^{\Tn{n}{1} } \\
\cof(\ann{n-2}{2}) \ar@{^{(}->}[r]^{\bnn{n-1}{2}}  \ar[d]  &
\cof(\Fn{n-1}{2}) \ar[r] \ar[d]^{\simeq} & \cof(\bnn{n-1}{2}) \ar[d] \ar[r]^{\bnn{n}{2}}   & \cof(\Fn{n}{2}) \\
\cof(\ann{n-1}{1})  \ar[r]^{\bnn{n}{1}} &  \cof(\Fn{n}{1})
\ar[r]^>>>>{r^{\bnn{n}{1}}} & \cof(\bnn{n}{1})=\cof(\ann{n}{1}) \ar@{-->}[ru] }

Because \w[,]{\bnn{n}{2} \circ \ann{n}{1} = \Tn{n}{1}\sim \ast } we obtain a dashed
map \w{s : \cof(\ann{n}{1}) \to \cof(\Fn{n}{2})} in
\wref[,]{diag of rectify}) so we can continue (the right end of) the previous diagram as:
$$
%
%
\xymatrix@R=20pt @C=12pt {  &   &  X_{n+2} \ar@{^{(}->}[r]^{f_{n+2}} \ar[d]_{r^{f_{n+1}}}
& X_{n+3} \ar[d]^{\simeq}   \\
%
%
 X_{n+1} \ar@{^{(}->}[r]^{f_{n+1}} \ar[d]_{r^{f_n}}
& X_{n+2} \ar[r]^{r^{f_{n+1}}} \ar@{=}[ru] \ar[d]^{\simeq} &
\cof(f_{n+1}) \ar@{^{(}->}[r]^{\bnn{1}{n+1}} \ar[d]_{r^{\ann{1}{n}}}
& \cof(\Fn{1}{n+1}) \ar[d]^{\simeq} \\
%
%
%
%
 \cof(f_n) \ar@{^{(}->}[r]^{\bnn{1}{n}} \ar@{.}[d]
& \cof(\Fn{1}{n}) \ar[r] \ar@{.}[d]   &
 \cof(\ann{1}{n}) \ar[r]^>>{\bnn{2}{n}} \ar@{.}[d] & \cof(\Fn{2}{n}) \ar@{.}[d]   \\
%
%
 \cof(\ann{n-2}{2}) \ar[r]^>>{\bnn{n-1}{2}} \ar[d]_{r^{\ann{n-1}{1}}} & \cof(\Fn{n-1}{2}) \ar[r] \ar[d]^{\simeq} & \cof(\ann{n-1}{2}) \ar[r]^>>{\bnn{n}{2}}  \ar[d]_<<<{r^{\ann{n}{1}}}  & \cof(\Fn{n}{2})  \\
%
%
 \cof(\ann{n-1}{1}) \ar[r]^>>{\bnn{n}{1}}  & \cof(\Fn{n}{1})
 \ar[r]^{r^{\bnn{n}{1}}} & \cof(\ann{n}{1}) \ar[ru]_{s} \\
 }
$$
The rectification is completed by replacing \w{f_{n+2}} with \w[.]{s \circ r^{\bnn{n}{1}} }
\end{proof}

%
%
%
\sect{A cubical version of general Toda brackets} \label{GTb}
In this section we provide an alternative ``cubical'' definition of general Toda brackets, essentially formulated in terms of a topological
(or simplicial) enrichment of $\Dd$. We then describe a diagrammatic approach to this definition.

\supsect{\protect{\ref{GTb}}.A}{Cubical Definition of the Toda Bracket}
The new definition of the general Toda bracket corresponds to the version of \S \ref{def 1 of 2}.
Implicitly we are using an enrichment for $\Dd$ in cubical sets, but since we are only concerned with (higher) nullhomotopies,
we can use the simplified approach of \cite{BBGondH}.

We first fix some notation for cubical diagrams:

\begin{defn}\label{dcubicals}
The \emph{n-cube category} \w{\In{n}{}}  is the category of binary sequences \w{\vva=\vva^{(n)}}
of length $n$ with morphisms given by the obvious partial order induced by \w{0\leq 1} in each coordinate.
Thus there is a (unique) map \w{\vva \to \vvb} if and only if \w{\vvb} is obtained from \w{\vva} by
replacing some of the $0$'s by $1$'s.

We denote by \w{\deg(\vva)} the number of $1$'s in \w[,]{\vva} and by \w{\init(\vva)\geq 0} the length of the initial segment of $1$'s in \w[,]{\vva}
for example \w{\deg(1,1,0,1,0,0)=3} and \w[.]{\init(1,1,0,1,0,0)=2}
We write \w{\rk} for the sequence (of degree \w[)]{n-1} with a single $0$ in place $k$.

Thus the category \w{\In{3}{}} is the partially ordered set:
\myaaadiag[\label{I^3}]{    (0,0,0)  \ar[d]^{} \ar[r] \ar[rrd] & (0,1,0) \ar[d] \ar[rrd] & & \\
(1,0,0) \ar[r] \ar[rrd] & (1,1,0)  \ar[rrd] & (0,0,1) \ar[d] \ar[r]  &  (0,1,1) \ar[d] \\
& & (1,0,1) \ar[r] & (1,1,1)  }
\end{defn}
\begin{defn}\label{delln}
We denote by \w{\Ln{n}{}} the full subcategory of \w{\In{n+1}{}} obtained by omitting \w[.]{(1,\dots,1)}

Thus \w{\Ln{1}{}} is the corner of the \w{2\times 2} square \w{\In{2}{}} opposite \w[:]{(1,1)}
$$
\xymatrix@R=10pt @C=10pt { (0,0) \ar[r] \ar[d] & (0,1)  \\
(1,0)
}
$$
Similarly, \w[]{\Ln{2}{}} is obtained from the \w{2\times 2\times 2} cube of  \wref{I^3} omitting \w[.]{(1,1,1)}
\end{defn}
\begin{defn}\label{n-cube}
An \emph{$n$-cube in $\Dd$} is a functor \w[,]{\Ac  :\In{n}{} \to \Dd}
and an \emph{$n$-cube map} \w{\mathfrak{F}} is a natural transformation between $n$-cubes.
We denote by \w{\Dd\sp{\In{n}{}}} the category of all $n$-cubes in $\Dd$.
By convention \w[.]{\Dd\sp{\In{0}{}}:=\Dd}
\end{defn}

\begin{remark}
Observe that a $1$-cube in $\Dd$ is a map between objects in $\Dd$, and more generally
we may think of an $n$-cube map in $\Dd$ as an \wwb{n+1}cube in $\Dd$.
\end{remark}
\begin{defn}\label{CC^mX}
For any \w[,]{X\in\Dd} the $m$-cube \w{\mathbb{C}^{(m)}X } in $\Dd$ is defined by
\w{ \mathbb{C}^{(m)}X(\vva)=C^{\deg(\vva)}X} (see \S \ref{cone defn}), where if $\vec{b}$ is obtained from $\vva$ by replacing the
$0$ in the $j$-th slot by $1$ and $k$ is the number of $1$'s in $\vva$ (and in $\vec{b}$) before the $j$-th slot, then
\w{\mathbb{C}^{(m)}X(\vva)\to\mathbb{C}^{(m)}X(\vec{b})} is \w[.]{C\sp{\deg(\vva)-k}(i^{C^k X})}

Thus
\w[]{\mathbb{C}^{(2)}X} is
\mybbbdiag[\label{eqc2x}]{X \ar@{^{(}->}[d]_{i^X} \ar@{^{(}->}[r]^{i^X}  & CX \ar@{^{(}->}[d]^{C(i^X)}  \\
CX \ar@{^{(}->}[r]_{i^{CX}} & C^2 X  \\
}
\end{defn}
\begin{defn}\label{LX}
We define \w{\mathbb{L}^{(n)}X} to be the restriction of \w{\mathbb{C}^{(n+1)}X } to \w[,]{\Ln{n}{}}
and denote its colimit by \w[.]{L^n X}

Thus \w[,]{L^1 X=\Sigma{X}} and \w{L^2 X} is the colimit of \w[:]{\mathbb{L}^{(2)}X}
$$
\xymatrix@R=15pt  {  X  \ar@{^{(}->}[d]^{} \ar@{^{(}->}[r] \ar@{^{(}->}[rrd]
& CX \ar@{^{(}->}[d] \ar@{^{(}->}[rrd]^{i^{CX}} & & \\
CX \ar@{^{(}->}[r]^{i^{CX}} \ar@{^{(}->}[rrd]_{i^{CX}} & C^2 X   &
CX \ar@{^{(}->}[d]_>>>{C(i^X)} \ar@{^{(}->}[r]_{C(i^X)}  &  C^2 X  \\
& & C^2 X  &   }
$$
which is weakly equivalent to the double suspension \w[.]{\Sigma\sp{2}X}
\end{defn}

\begin{defn}\label{d_k}
Given \w[,]{X,Y \in \Dd} and\w[,]{1\leq k\leq m} let
$$
\delta\sb{k}:=C^{m-k}(i^{C^{k-1}X}):C^{m-1}X\to C^m X
$$
and write
\w{d_{k}=d^{X,Y}_{k}(m):\Hom_\mathcal{D}(C^m X,Y) \to \Hom_\mathcal{D}(C^{m-1} X,Y)} for
the induced map \w[.]{\delta\sb{k}\sp{\ast}}
Thus for \w{m=2} we have:
$$
\xymatrix@R=15pt @C=30pt {  &   CX  \ar@{^{(}->}[d]_{C(i^X)} \ar@/^{1.5pc}/[rdd]^{d_1(f)} \\
CX \ar@{^{(}->}[r]_{i^{CX}} \ar@/_{1.5pc}/[rrd]_{d_2(f)} & C^2X \ar[rd]^{f}  \\
&& Y
}
$$
\end{defn}
\begin{lemma}\label{bouhocom}
For any \w[]{f:C^n X\to Y} and \w{g:C^m Y\to Z} we have
$$
d_{k}(g \circ C^m(f)) = \begin{cases}
g   \circ C^m(d_{k}(f) )   &  1 \leq   k   \leq n      \\
d_{k-n}(g )  \circ C^{m-1}(f)  &  n+1 \leq k \leq m+n
\end{cases}
$$
\end{lemma}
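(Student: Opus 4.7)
The plan is to unwind both sides using the defining equation $d_k(h) = h \circ C^{N-k}(i^{C^{k-1}X})$, where $N$ is the cone-degree of the source of $h$, and then to reduce the difference between the two cases to a single application of naturality of the cone inclusion $i$. Since $g \circ C^m(f)$ has source $C^{m+n}X$, I would begin by rewriting the left-hand side as
$$d_k(g \circ C^m(f)) \;=\; g \circ C^m(f) \circ C^{m+n-k}\bigl(i^{C^{k-1}X}\bigr).$$

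For $1 \leq k \leq n$, the inner cone inclusion $i^{C^{k-1}X}$ sits between two iterated cones of $X$, so functoriality of $C^m$ lets me pull it through:
$$C^m(f) \circ C^{m+n-k}\bigl(i^{C^{k-1}X}\bigr) \;=\; C^m\bigl(f \circ C^{n-k}(i^{C^{k-1}X})\bigr) \;=\; C^m(d_k(f)),$$
which immediately yields the first case.

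For $n+1 \leq k \leq m+n$, the inclusion $i^{C^{k-1}X}$ now lies beyond where $f$ acts, so I would invoke naturality of $i$: for any $h: Z \to W$, one has $Ch \circ i^Z = i^W \circ h$. Applying this to $h = C^{k-n-1}(f): C^{k-1}X \to C^{k-n-1}Y$ gives $C^{k-n}(f) \circ i^{C^{k-1}X} = i^{C^{k-n-1}Y} \circ C^{k-n-1}(f)$, and applying $C^{m+n-k}$ to both sides yields
$$C^m(f) \circ C^{m+n-k}\bigl(i^{C^{k-1}X}\bigr) \;=\; C^{m+n-k}\bigl(i^{C^{k-n-1}Y}\bigr) \circ C^{m-1}(f).$$
Substituting back and recognizing $g \circ C^{m+n-k}(i^{C^{k-n-1}Y}) = d_{k-n}(g)$ produces the second case.

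The only real obstacle is index bookkeeping: tracking how many copies of $C$ each instance of $i$ carries, verifying that the shift $k \mapsto k-n$ correctly reconciles the exponents on the $X$-side with those on the $Y$-side, and checking that the non-negativity of all relevant exponents (namely $n-k \geq 0$, $k-n-1 \geq 0$, and $m+n-k \geq 0$) is precisely guaranteed by the two case hypotheses. There are no homotopical subtleties; the identity is strict, following purely from functoriality of $C^{(-)}$ and naturality of the cone inclusion.
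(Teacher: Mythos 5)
Your proof is correct, and the index bookkeeping checks out in both cases: for $1 \leq k \leq n$ the factor $C^{m+n-k}(i^{C^{k-1}X})$ is $C^m$ applied to $C^{n-k}(i^{C^{k-1}X})$, which recovers $C^m(d_k(f))$ by functoriality; for $n+1 \leq k \leq m+n$ your application of naturality of $i$ to $C^{k-n-1}(f)$, followed by $C^{m+n-k}$, gives exactly the required exchange, and the identification $g \circ C^{m+n-k}(i^{C^{k-n-1}Y}) = d_{k-n}(g)$ is valid since $1 \leq k-n \leq m$. The paper in fact states this lemma without proof, so there is nothing to compare against, but your argument (unwind $d_k$ via $\delta_k$, then use functoriality of $C$ in the first case and naturality of the cone inclusion in the second) is the natural one and is complete.
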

\begin{defn}\label{order cahin}
Given a linear diagram \w{\Xs:=(X_1\stk{f_1} X_2 \dotsc\stk{f_{n-1}} X_{n} \stk{f_{n}} X_{n+1})} of
length $n$ in $\Dd$, an \emph{$m$-th order cubical Toda system} over \w{\Xs}(\w{m \leq n-2}) is a system \w{\XF=\XF^{(m,n)}} of
(higher order) nullhomotopies
$$
\FFs=\{\Fm{k}{j} \in \Hom(C^k X_j ,X_{j+k+1})~|\ 1\leq k \leq m,\
1 \leq j \leq n-k\}
$$
satisfying:
\begin{myeq}\label{eq2}
\begin{cases}
d_1(\Fm{k}{j})= \Fm{k-1}{j+1} \circ C^{k-1}(\Fm{0}{j})       \\
d_{2}(\Fm{k}{j})=\Fm{k-2}{j+2} \circ C^{k-2}( \Fm{1}{j})           \\
 \qquad   \vdots  \qquad  \qquad   \vdots          \\
d_{r}(\Fm{k}{j})=\Fm{k-r}{j+r} \circ C^{k-r}( \Fm{r-1}{j})          \\
 \qquad   \vdots  \qquad  \qquad   \vdots          \\
d_{k}(\Fm{k}{j})=\Fm{0}{j+k} \circ \Fm{k-1}{j}   \\
\end{cases}
\end{myeq}
where \w{\Fm{0}{j}=f_j} (see \cite{BBGondH}).
\end{defn}
\begin{defn}\label{Gen defn 1 of 2}
Let \w[]{\XF^{(n,n+2)}} be an $n$-th order cubical Toda system of length \w[.]{n+2}
By Lemma \ref{bouhocom} and \wref[,]{eq2} the maps
$$
\Fm{n-k+1}{k+1}\circ C^{n-k+1}(\Fm{k-1}{1}) :  \mathbb{L}^{(n)}X_1(\rk) = C^{n}X_1 \to X_{n+3}
 \quad  \quad (1 \leq k \leq n+1)
$$
satisfy
$$
d_{j-1}(\Fm{m-k-2}{k+1} \circ C^{m-k-2}(\Fm{k-1}{1}))
=d_{k}(\Fm{m-j-2}{j+1} \circ C^{m-j-2}(\Fm{j-1}{1}))
$$
so they induce a map which is called \emph{value of the Toda bracket}:
$$
\Tm{n}{1}\XF=\lra{f_1,f_2,\dots,f_{n+2},\{\{\Fm{m}{k}\}_{k=1}^{n-m+2}\}_{m=1}^{n}}: L^n X_1 \to X_{n+3}~.
$$
One can similarly define \w{\Tm{m}{j}=\Tm{m}{j}\XF} for suitable $m$ and $j$.
\end{defn}
\begin{example}
Given a linear diagram \w{\Xs=(X_1 \stk{f_1} X_2 \stk{f_2} X_3 \stk{f_3} X_4 \stk{f_4} X_5)} of length $4$ in $\Dd$,
a second order cubical Toda system over \w{\Xs} consists of three (first order) nullhomotopies
\w{\Fm{1}{1} :C X_1  \to X_{3}} for \w[,]{f_2 \circ f_1} \w{\Fm{1}{2} :C X_2  \to X_{4}} for \w[,]{f_3 \circ f_2}
and \w{\Fm{1}{3} :C X_3  \to X_{5} } for \w[,]{f_4 \circ f_3 } respectively, and two second-order nullhomotopies
\w{\Fm{2}{1} \in \Hom(C^2X_1 ,X_{4})} and \w{\Fm{2}{2} \in \Hom(C^2X_2 ,X_{4}) }  satisfying:
\w[,]{d_1(\Fm{2}{1})=\Fm{1}{2} \circ C f_1} \w[,]{d_1(\Fm{2}{2})=\Fm{1}{3} \circ C f_2}
\w[,]{d_2(\Fm{2}{1})=f_{3} \circ \Fm{1}{1}}  and \w[.]{d_2(\Fm{2}{2})=f_{4} \circ \Fm{1}{2}}

The value \w{\lra{f_1,f_2,f_3,f_4,\Fm{1}{1},\Fm{1}{2},\Fm{1}{3},\Fm{2}{1},\Fm{2}{2}}:L^2 X_1\to X_5}
is depicted by:

%
%
\begin{center}
\begin{picture}(350,180)(0,0)
%
%
\put(10,95){\circle*{3}}
\multiput(10,94)(0,-3){30}{\circle*{.5}}
{\put(50,75){$f_4\circ f_3\circ F_1$}}
{\put(79,82){\vector(1,1){10}}}
\put(140,95){\circle*{5}}
\put(140,95){\line(-1,0){130}}
\put(143,87){$f_4\circ f_3\circ f_2\circ f_1$}
\put(140,95){\line(0,-1){85}}
{\put(154,50){$F_3\circ Cf_2 \circ Cf_1$}}
{\put(154,46){\vector(-2,-1){13}}}
\put(140,5){\circle*{3}}
\multiput(12,5)(3,0){43}{\circle*{.5}}
\put(10,5){\circle*{3}}
{\put(50,45){\framebox{$F_3\circ CF_1$}}}
%
%
\put(157,169){\circle*{3}}
\multiput(157,169)(-2,-1){75}{\circle*{.5}}
\put(289,169){\circle*{3}}
\multiput(289,170)(-3,0){44}{\circle*{.5}}
\put(289,169){\line(-2,-1){150}}
{\put(180,149){$f_4 \circ F_2\circ Cf_1$}}
{\put(209,146){\vector(1,-1){10}}}
{\put(120,126){\framebox{$f_4 \circ \Fm{2}{1}$}}}
%
%
\multiput(289,170)(0,-3){30}{\circle*{.5}}
\put(289,79){\circle*{3}}
\multiput(289,79)(-2,-1){75}{\circle*{.5}}
{\put(219,102){\framebox{$\Fm{2}{2}\circ C^{2}f_1$}}}
\end{picture}
\end{center}
\end{example}
%
%
%

\supsect{\protect{\ref{GTb}}.B}{Diagrammatic interpretation for the cubical system}
We now describe cubical Toda systems in terms of diagrams in the
model category $\Dd$: more precisely, we will encode the Toda system by two maps of cubes, compassing the various nullhomotopies.
This will allow us to define a suitable equivalence relation $\tos$ on these systems (for which the Toda bracket is an invariant).

\begin{defn}\label{Mm}
Assume given an $n$-th order cubical Toda system \w[]{\XF} of length \w[.]{n+2}
For each \w{1 \leq m \leq n} and \w[,]{1 \leq j \leq n-m+3 }
we define an $m$-cube  \w{\Mm{m}{j}=\Mm{m}{j}\XF} in $\Dd$ by setting \w[,]{\Mm{m}{j} (\vva):= C^{\deg(\vva)-\init(\vva)} X_{j+\init(\vva)}}
(see Definition \ref{dcubicals}).

If \w{\vvb\in\In{n}{}} is obtained from $\vva$ by replacing $0$ in the $s$-th coordinate with $1$, the corresponding morphism
\w{\Mm{m}{j}(\vva)\to\Mm{m}{j}(\vvb)} is
$$
\begin{cases}
C^{\deg(\vva)-\init(\vva)-\ell} (F^{(\ell)}_{j+\init(\vva)}) & \text{if}\ s=\init(\vva)+1\\
C^{\deg(\vva)-r}(i^{C^{r-\init{\vva}} X_{j+\init(\vva)}}) & \text{otherwise}
\end{cases}
$$
where \w{\ell\geq 0} is the number of consecutive $1$'s starting from the \w{s+1} coordinate,
and \w[.]{r=a_1+\dots+a_{s-1}}

Thus \w{\Mm{1}{1}} is the $1$-cube \w[,]{f_1 :X_1 \to X_2} and \w{\Mm{3}{2}} is the $3$-cube:
$$
\xymatrix@R=25pt@C=40pt {  X_2  \ar[d]_{f_2} \ar@{^{(}->}[r]^{i^{X_2}} \ar@{^{(}->}[rrd]^>>>>>>>>>>>>{i^{X_2}} & CX_2 \ar[d]_>>>{F_2}
\ar@{^{(}->}[rrd]^{i^{CX_2}} & & \\
X_3 \ar[r]^{f_3} \ar@{^{(}->}[rrd]_{i^{X_3}} & X_4  \ar[rrd]_<<<<<<<<<<<<<<{f_4} & CX_2 \ar[d]^<<<{Cf_2} \ar@{^{(}->}[r]_{C(i^{X_2})}
&  C^2 X_2 \ar[d]^{\Fm{2}{2}} \\
& & CX_3 \ar[r]_{F_3} & X_5  }
$$
\end{defn}

\begin{defn}\label{AAA}
The $m$-cube map \w{\AAA{m}{j}=\AAA{m}{j}\XF : \mathbb{C}^{(m)}X_j \to \Mm{m}{j+1} } is given by
\w[.]{\AAA{m}{j} (\vva) = C^{\deg(\vva)-\init(\vva)} \Fm{\init(\vva)}{j}}

Thus \w{\AAA{2}{1} : \mathbb{C}^{(2)} X_1 \to \Mm{2}{2} } is the following back-to-front map of squares:
$$
\xymatrix@R =25pt@C=40pt {  X_1  \ar@{^{(}->}[d]_<<<{i^{X_1}} \ar@{^{(}->}[r]^{i^{X_1}} \ar[rrd]^>>>>>>>>>>>>>{f_1} & CX_1 \ar@{^{(}->}[d]^<<<{}
\ar[rrd]^{Cf_1} & & \\
CX_1 \ar@{^{(}->}[r]^{i^{CX_1}} \ar[rrd]_{F_1} & C^2 X_1  \ar[rrd]_<<<<<<<<{\Fm{2}{1}} & X_2 \ar[d]^<<<{f_2} \ar@{^{(}->}[r]_{i^{X_2}}
&  C X_2 \ar[d]^{F_2} \\
& & X_3 \ar[r]_{f_3} & X_4  }
$$
\end{defn}
\begin{defn}\label{xm}
Given an $n$-th order cubical Toda system \w[]{\XF} of length $n+2$ in $\Dd$, for each \w{1 \leq m \leq n} and \w{1 \leq j \leq n-m+1 }  we define the $m$-cube
\w{\xm{m}{j+m+2}=\xm{m}{j+m+2}\XF} by
$$
\xm{m}{j+m+2}(\vva) =
\begin{cases}
 X_{j+m+2}& \text{if}\ \vva=(1,\dots,1)  \\
C \Mm{m}{j+1}(\vva)& \text{otherwise}
\end{cases}
$$
with edge \w{\xm{m}{j+m+2}(\vva) \to \xm{m}{j+m+2}(\vvb)} equal to
$$
\begin{cases}
 \Fm{m-k+1}{j+k} & \text{if}\ \vva=\rk\ \text{and}\  \vvb=(1,\dots,1)\\
C (\Mm{m}{j+1}(\vva)\to  \Mm{m}{j+1}(\vvb))  & \text{otherwise}
\end{cases}
$$

Thus \w{\xm{1}{4}} is the $1$-cube \w[,]{F_2: CX_2 \to X_4}  and \w{\xm{3}{7}} is the $3$-cube:
$$
\xymatrix@R=25pt@C=45pt { C X_3  \ar[d]_{C f_3} \ar@{^{(}->}[r]^{C(i^{X_3})} \ar@{^{(}->}[rrd]^>>>>>>>>>>{C(i^{X_3})}
& C^2 X_3
 \ar[d]_{CF_3} \ar[rrd]^{C(i^{CX_3})} & & \\
CX_4 \ar[r]^{Cf_4} \ar[rrd]_{C(i^{X_4})} & C X_5  \ar[rrd]_<<<<<<<<<<<<<{F_5} & C^2 X_3 \ar[d]^<<<<<{C^2 f_3}
 \ar@{^{(}->}[r]_{C^2(i^{X_3})}
&  C^3 X_3 \ar[d]^{\Fm{3}{3}} \\
& & C^2X_4 \ar[r]_{\Fm{2}{4}} & X_7  }
$$
\end{defn}
\begin{defn}\label{BBB}
Given a higher order cubical Toda system \w[]{\XF} in $\Dd$, the $m$-cube map \w{\BBB{m}{j+1}=\BBB{m}{j+1}\XF : \Mm{m}{j+1} \to  \xm{m}{j+m+2}}  is
defined at the $\vva$-vertex by
$$
\BBB{m}{j+1}(\vva) =
\begin{cases}
 f_{j+m+1}& \text{if} ~  \vva=(1,1,\dots,1)  \\
i^{\Mm{m}{j+1}(\vva)}& \text{otherwise}
\end{cases}
$$

Thus \w{\BBB{2}{2}: \Mm{2}{2} \to \xm{2}{5}} is the back-to-front map of squares:
$$
\xymatrix@R=25pt@C=40pt {  X_2  \ar[d]_{f_2} \ar@{^{(}->}[r]^{i^{X_2}} \ar@{_{(}->}[rrd]^>>>>>>>>>>>>{i^{X_2}} & CX_2 \ar[d]_{F_2} \ar@{^{(}->}[rrd]^{i^{CX_2}} & & \\
X_3 \ar[r]^{f_3} \ar@{_{(}->}[rrd]_{i^{X_3}} & X_4  \ar[rrd]_<<<<<<<<<<<<<<{f_4} & CX_2 \ar[d],^<<<{Cf_2}
\ar@{^{(}->}[r]_{C(i^{X_2})}
&  C^2 X_2 \ar[d]^{\Fm{2}{2}} \\
& & CX_3 \ar[r]_{F_3} & X_5  }
$$
\end{defn}

\begin{summary}\label{stwocubes}
The data of an $n$-th cubical order Toda system \w[]{\XF} of length $n+2$ is encoded by the two of $n$-cube maps
\mydiagram[\label{eqtwocubes}]{
\mathbb{C}^{(n)}X_1 \ar[rrr]^{\AAA{n}{1} \XF} &&&  \Mm{n}{2}\XF
 \ar[rrr]^{\BBB{n}{2}\XF} &&& \xm{n}{n+3}\XF
}

Thus a second-order cubical Toda system\w{\XF^{(2)}} of length $4$ is described by:
$$
\xymatrix@R =25pt@C=30pt {  X_1  \ar@{^{(}->}[d] \ar@{^{(}->}[r] \ar[rrd]^>>>>>>>>>>{f_1} & CX_1 \ar@{^{(}->}[d]
\ar[rrd]^{Cf_1} & & \\
CX_1 \ar@{^{(}->}[r]^{i^{CX_1}} \ar[rrd]_{F_1} & C^2 X_1  \ar[rrd]_<<<<<<<{\Fm{2}{1}} & X_2
\ar@{^{(}->}[rrd]
\ar[d]^<<<{f_{2}} \ar@{^{(}->}[r]
&  C X_2 \ar@{^{(}->}[rrd]^{i^{CX_2}} \ar[d]^<<<<<{F_2} \\
& & X_3 \ar@{^{(}->}[rrd]_{i^{X_3}} \ar[r]^>>>>>>>{f_3} &
X_4 \ar[rrd]^>>>>>>>>>>{f_4} & CX_2 \ar[d] \ar@{^{(}->}[r] & C^2X_2 \ar[d]^{\Fm{2}{2}}  \\
&&  & &   CX_3 \ar[r]_{F_3} & X_5}
$$
\end{summary}

\begin{defn}\label{dtwocubes}
An $n$-th order cubical Toda system \w[]{\XF} in $\Dd$ is called \emph{strongly cofibrant} if the diagram \wref{eqtwocubes} is strongly cofibrant (see Definition \ref{rprojmodst}).
\end{defn}

\begin{remark}\label{rtwocubes}
Note that when \w{\XF} is strongly cofibrant, in particular, the three $n$-cubes \w[,]{\mathbb{C}^{(n)}X_1}
\w[,]{\Mm{n}{2}\XF} and \w{\xm{n}{n+3}\XF} are strongly cofibrant (this always holds for  \w{\mathbb{C}^{(n)}X_1} as long as \w{X_1} is cofibrant in $\Dd$).
\end{remark}

\begin{defn}\label{relation between systyms}
Two $n$-th order cubical Toda systems \w{\XF} and \w{\XG} are \emph{equivalent} (written \w[)]{\XF \toss \XG} if  the sequences
$$
\xymatrix{
\mathbb{C}^{(n)}X_1 \ar[rrr]^{\AAA{n}{1} \XF} &&&  \Mm{n}{2}\XF
 \ar[rrr]^{\BBB{n}{2}\XF} &&& \xm{n}{n+3}\XF
}
$$
and
$$
\xymatrix{
\mathbb{C}^{(n)}X'_1 \ar[rrr]^{\AAA{n}{1} \XG} &&&  \Mm{n}{2}\XG
 \ar[rrr]^{\BBB{n}{2}\XG} &&& \xm{n}{n+3}\XG
}
$$
are equivalent as diagrams under the relation \w{\tos} of definition \S \ref{rprojmodst}  .

As we shall show in Corollary \ref{ceqimpeqtoda} below, equivalent higher order cubical Toda systems have $\tos$-equivalent Toda
brackets.
\end{defn}

\begin{prop}
Every higher order cubical Toda system\w{\XF} is equivalent under the relation \w{\toss} to a strongly cofibrant one.
\end{prop}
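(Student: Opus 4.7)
The strategy will be iterated factorization in $\Dd$. Since the indexing category for \wref{eqtwocubes} is a direct poset, strong cofibrancy amounts to the latching-map criterion of Definition \ref{rprojmodst} at each object; I will construct $\XG$ so that every such latching map is a cofibration, exploiting two structural facts: that $\Dd$ is proper and simplicial, and that the cone functor $C$ (built from the simplicial cylinder and a pushout against the basepoint) preserves cofibrant objects and cofibrations. Consequently, once the skeleton $\{X'_j, f'_j\}$ is cofibrant, every ``pure cone'' edge of $\mathbb{C}^{(n)}X'_1$, $\Mm{n}{2}\XG$, and $\xm{n}{n+3}\XG$ will automatically be a cofibration, and only the edges involving nullhomotopies will require further work.

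Concretely, I would first pick a cofibrant replacement $X'_1 \xrightarrow{\simeq} X_1$, then for $j = 2,\dots,n+3$ inductively factor the composite $X'_{j-1} \to X_{j-1} \xrightarrow{f_{j-1}} X_j$ as a cofibration $f'_{j-1} : X'_{j-1} \hookrightarrow X'_j$ followed by an acyclic fibration $X'_j \xrightarrow{\simeq} X_j$. I would then build the new nullhomotopies $G^{(k)}_j : C^k X'_j \to X'_{j+k+1}$ by lexicographic induction on $(k,j)$, with $k$ running from $1$ to $n$. At stage $(k,j)$, the previously constructed $G^{(\ell)}_i$ (with $\ell<k$) together with the coherence prescriptions $d_r(G^{(k)}_j) = G^{(k-r)}_{j+r} \circ C^{k-r}(G^{(r-1)}_j)$ from \wref{eq2} assemble into a map from the latching pushout $P \hookrightarrow C^k X'_j$ to $X'_{j+k+1}$. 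I would lift the composite $C^k X'_j \to C^k X_j \xrightarrow{F^{(k)}_j} X_{j+k+1}$ along the acyclic fibration $X'_{j+k+1} \xrightarrow{\simeq} X_{j+k+1}$ (using that $C^k X'_j$ is cofibrant) to obtain a candidate map, and use the simplicial homotopy extension property for the cofibration $P \hookrightarrow C^k X'_j$ to modify this candidate on $P$ so that it strictly realizes the prescribed latching data, thereby defining $G^{(k)}_j$. Finally, I would factor the latching map $P \to X'_{j+k+1}$ as a cofibration $P \hookrightarrow X''_{j+k+1}$ followed by an acyclic fibration $X''_{j+k+1} \xrightarrow{\simeq} X'_{j+k+1}$, and replace $X'_{j+k+1}$ by $X''_{j+k+1}$, composing all remaining data at indices $\geq j+k+1$ with the weak equivalence on the right.

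The main obstacle will be the forward propagation: each replacement of $X'_i$ alters the target of earlier maps into $X'_i$, so I must check that this does not spoil cofibrancy or coherence at smaller indices. Running the induction lexicographically handles both issues — only targets at the current or later index are altered, composition on the right with an acyclic fibration preserves cofibrations, and the strict coherence identities \wref{eq2} are arranged by construction at each stage and stay strict after right-composition with the subsequent weak equivalences. The resulting $\XG$ is strongly cofibrant by construction; moreover, the family of weak equivalences $X'_j \xrightarrow{\simeq} X_j$ intertwines the structure maps up to the chosen coherent homotopies, giving a levelwise zigzag in $\Dd^{\mathcal{I}}$ between the encoding diagrams \wref{eqtwocubes} for $\XG$ and $\XF$, whence $\XG \toss \XF$.
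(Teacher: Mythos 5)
Your strategy---replace the linear diagram by a cofibrant one, then lift the nullhomotopies by lexicographic induction on $(k,j)$, factoring latching maps as you go---is the same inductive-over-a-filtration argument the paper packages into Lemma~\ref{lfiltr}, and you correctly identify the two structural facts that make it go: the cone functor preserves cofibrancy, and strict coherence at stage $(k,j)$ is forced by solving a lifting problem against the latching cofibration $P\hookrightarrow C^k X'_j$.

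The difficulty is the ordering of your last two steps. You first produce $G^{(k)}_j\colon C^k X'_j\to X'_{j+k+1}$, and \emph{afterwards} factor the latching map $P\to X'_{j+k+1}$ as a cofibration $P\hookrightarrow X''_{j+k+1}$ followed by an acyclic fibration $X''_{j+k+1}\xrightarrow{\sim}X'_{j+k+1}$, replacing $X'_{j+k+1}$ by $X''_{j+k+1}$. This invalidates the map you just built: $G^{(k)}_j$ has target $X'_{j+k+1}$, and your proposed remedy, ``composing all remaining data with the weak equivalence,'' goes the wrong way --- the acyclic fibration points $X''\to X'$, so it sends maps \emph{out of} $X''$ to maps out of $X'$, not maps \emph{into} $X'$ to maps into $X''$. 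Those must be re-lifted, not composed. Moreover, once $X'_{j+k+1}$ is replaced, every iterated cone $C^\ell X'_{j+k+1}$ appearing as a source or as an off-diagonal vertex of $\Mm{n}{2}$ or $\xm{n}{n+3}$ changes as well, so already-built data involving those cones has to be reconstructed; and since several pairs $(k',j')$ with $k'+j'=k+j$ point at the same target, the same object can be re-factored repeatedly. This is precisely the forward-propagation cascade you flag, but the fix you offer does not stop it.

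The clean ordering is to factor \emph{before} you lift, in the order dictated by the filtration of the indexing poset of \wref{eqtwocubes} (by $j_0+\deg(\vva)$): at each new ``spine'' vertex whose value is an $X_j$, first assemble the latching object $P$ from the already-finalized lower data, factor $P\hookrightarrow X'_j\twoheadrightarrow X_j$, and only then use the lifting property of the acyclic fibration $X'_j\twoheadrightarrow X_j$ against the cofibration $P\hookrightarrow C^{k}X'_{j'}$ to produce the nullhomotopy with prescribed restriction to $P$. Each object is then fixed once and for all before any map into it (or any cone of it) is constructed, so no re-lifting is ever needed. You should also note that at this stage the relevant latching object is that of the full encoding diagram \wref{eqtwocubes}, not of the single cube $\Mm{n}{2}$: it receives a contribution from $\mathbb{C}^{(n)}X'_1$ via $\AAA{n}{1}$ as well, so the $P$ you factor through is slightly larger than the one in your sketch. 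With these two adjustments your argument matches what Lemma~\ref{lfiltr} delivers, together with the (implicit in the paper, explicit in your write-up) observation that the filtration-step factorizations may be chosen so that the cone vertices are literal cones, keeping the result an honest encoding diagram of a cubical Toda system.
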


This follows by induction from:

\begin{lemma}\label{lfiltr}
Given an indexing category $\In{}{}$ filtered as in \S \ref{rprojmodst} and a diagram \w[,]{\Ac : \In{}{} \to \Dd} any (strongly)
cofibrant replacement for its restriction \w{\Ac'} to \w{F\sb{i}} can be extended to a strongly cofibrant replacement for \w[.]{\Ac}
\end{lemma}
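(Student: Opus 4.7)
The plan is to proceed by induction on the filtration level, extending the given strongly cofibrant replacement $\widetilde{\Ac}_i$ of $\Ac|_{F_i}$ successively through $F_{i+1} \subset F_{i+2} \subset \dots \subset F_n = \In{}{}$. At the end of stage $k \geq i$ I will have a strongly cofibrant diagram $\widetilde{\Ac}_k : F_k \to \Dd$ together with a pointwise weak equivalence $\widetilde{\Ac}_k \xrightarrow{\simeq} \Ac|_{F_k}$ that restricts to the given replacement on $F_i$; the base case is the hypothesis of the lemma.

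For the inductive step from $F_k$ to $F_{k+1}$, fix an object $j \in F_{k+1} \setminus F_k$. Since all non-identity arrows of $\In{}{}$ strictly increase filtration, every non-identity morphism with target $j$ originates from some $i' \in F_k$, and there are no non-identity arrows between distinct objects of $F_{k+1} \setminus F_k$. Form the latching object
\[
L_j \ := \ \colim_{\substack{i' \to j \\ i' \in F_k}} \widetilde{\Ac}_k(i'),
\]
and observe that the composites $\widetilde{\Ac}_k(i') \xrightarrow{\simeq} \Ac(i') \xrightarrow{\Ac(i' \to j)} \Ac(j)$ assemble by universality into a canonical map $\lambda_j : L_j \to \Ac(j)$. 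Apply the functorial factorization in $\Dd$ to write
\[
\lambda_j \ = \ \Big( L_j \ \hookrightarrow \ \widetilde{\Ac}_{k+1}(j) \ \xrightarrow{\simeq} \ \Ac(j) \Big)
\]
as a cofibration followed by a trivial fibration. This defines the value of the extension at $j$, and the universal property of $L_j$ supplies the action on every arrow $i' \to j$ with $i' \in F_k$, naturally compatible with the weak equivalence component at $j$ since the factorization is of $\lambda_j$ itself. Performing this construction independently for each $j \in F_{k+1} \setminus F_k$ (permissible because no non-identity arrows connect them) yields the extension $\widetilde{\Ac}_{k+1} : F_{k+1} \to \Dd$.

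Strong cofibrancy of $\widetilde{\Ac}_{k+1}$ is then immediate from the recursive description in Definition \ref{rprojmodst}: the data on $F_0$ is cofibrant by hypothesis, and the latching-to-value map at each new object $j$ is exactly the cofibration produced by our factorization. After finitely many inductive steps we arrive at a strongly cofibrant replacement $\widetilde{\Ac}_n$ of $\Ac$ on all of $\In{}{}$, restricting to the given data on $F_i$, as required.

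The only delicate point is functoriality of the extension across composites of arrows connecting different filtration levels; this is handled automatically by the universal property of the colimit defining $L_j$, which enforces the required coherence between all arrows $i' \to j$ with $i' \in F_k$. Consequently the main content of the argument is the standard ``functorial factorization on each latching object'' recipe, adapted to the given filtration, with the finite length of the filtration ensuring the process terminates in finitely many steps.
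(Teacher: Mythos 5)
The paper states Lemma \ref{lfiltr} without proof (it passes directly to Definition \ref{cone of cube}), treating it as a standard fact in the spirit of Reedy/direct-category cofibrant replacement. Your argument is correct and is exactly the standard one: induct over the filtration, use that every non-identity arrow into $j\in F_{k+1}\setminus F_k$ must originate in $F_k$ (so the relevant latching object $L_j$ is a colimit of the already-constructed $\widetilde{\Ac}_k$), obtain the comparison map $\lambda_j:L_j\to\Ac(j)$ from the cocone induced by naturality of the weak equivalence $\widetilde{\Ac}_k\to\Ac|_{F_k}$, and factor $\lambda_j$ as a cofibration followed by a trivial fibration. The observation that no non-identity arrows connect distinct objects of $F_{k+1}\setminus F_k$ (since they all have the same filtration degree) is precisely what permits doing this independently at each new vertex, and the factorization is exactly what the recursive description of ``strongly cofibrant'' in Definition \ref{rprojmodst} demands. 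The one thing worth saying explicitly, which you state only implicitly, is that the restriction $\widetilde{\Ac}_{k+1}|_{F_k}=\widetilde{\Ac}_k$ is unchanged by the step, so the final replacement really does extend the given one on $F_i$; but this is clear from your construction, so the proof is complete.
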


\begin{defn}\label{cone of cube}
The \emph{cone} of an $n$-cube \w{\Ac} is the $n$-cube \w{C\Ac} obtained by applying the cone functor to \w[,]{\Ac}
with the $n$-cube inclusion \w[.]{\mathfrak{i}^{\Ac} : \Ac \to C\Ac}
\end{defn}
\begin{defn}\label{diagnull}
Given an $n$-th order cubical Toda system\w[,]{\XF}
for each \w{1 \leq m \leq n} and \w[,]{1 \leq j \leq n-m+3 }  the corresponding \emph{diagrammatic nullhomotopy}
is the \wwb{m-1}cube map \w{\FFF{m}{j}=\FFF{m}{j}\XF : C\mathbb{C}^{(m-1)} X_j \to \xm{m-1}{j+m+1} }
is defined:
$$
\FFF{m}{j}(\vva) =
\begin{cases}
 \Fm{m}{j} & \text{if} ~  \vva=(1,\dots,1)  \\
C^{\deg(\vva)-\init(\vva)+1} \Fm{\init(\vva)}{j} &\text{otherwise}
\end{cases}
$$

Thus for \w{m=1} we have the map of $0$-cubes \w[,]{\FFF{1}{j} = F_j} while
\w{\FFF{3}{1} : C\mathbb{C}^{(2)} X_1 \to \xm{2}{5}} is given by:
$$
\xymatrix@R=20pt@C=40pt {  CX_1  \ar@{^{(}->}[d]_{} \ar@{^{(}->}[r]^{} \ar[rrd]^>>>>>>>>>>>>{Cf_1}
& C^2 X_1 \ar@{^{(}->}[d]^{} \ar[rrd]^{C^2 f_1} & & \\
C^2 X_1 \ar@{^{(}->}[r]^{} \ar[rrd]_{CF_1} & C^3 X_1  \ar[rrd]_<<<<<<<<<<{\Fm{3}{1}} &
CX_2 \ar[d]^{} \ar@{^{(}->}[r]_{C(i^{X_2})}
&  C^2 X_2 \ar[d]^{\Fm{2}{2}} \\
& & CX_3 \ar[r]_{F_3} & X_5  }
$$
\end{defn}
\begin{remark}
Note that \w{\FFF{m}{j}\XF} is indeed a diagrammatic nullhomotopy, because the following diagram of
$m$-cube maps commutes:
\mycccdiag[\label{eqdiagnull}]{  C\mathbb{C}^{(m)}X_j \ar@/^{1.5pc}/[rrrrrrd]^{\FFF{m+1}{j}\XF} \\
\mathbb{C}^{(m)}X_j \ar@{_{(}->}[u]^{i^{\mathbb{C}^{(m)}X_j}} \ar[rrr]^{\AAA{m}{j} \XF} &&&  \Mm{m}{j+1}\XF
 \ar[rrr]^{\BBB{m}{j+1}\XF} &&& \xm{m}{m+j+2}\XF }
\end{remark}
%
%
%

\supsect{\protect{\ref{GTb}}.C}{Cubical Toda bracket in terms of the homotopy cofiber of cubes}

We show how the cubical Toda bracket defined above can be described in terms of the homotopy cofibers of maps of cubes.

\begin{defn}\label{partial}
For each \w{1\leq j \leq n} and \w[,]{\varepsilon=0,1} we let \w{\pa{j}{\varepsilon}\In{n}{} } denote the full subcategory of \w{\In{n}{}}
(isomorphic to \w[)]{\In{n-1}{}} consisting of sequences $\vva$ with \w[.]{a\sb{j}=\varepsilon}
\end{defn}

\begin{defn}\label{pa}
For any $n$-cube \w{\Ac} and \w[,]{1 \leq j \leq n } we denote by \w{ \pa{j}{\varepsilon} \Ac} the restriction of  \w{\Ac} to
\w{\pa{j}{\varepsilon}\In{n}{}} \wb[,]{\varepsilon=0,1} with the obvious \wwb{n-1}cube map
\w[.]{\partial\sp{j}\Ac : \pa{j}{0} \Ac \to \pa{j}{1} \Ac}  By functoriality any $n$-cube map \w{\mathfrak{F} : \Ac \to \mathbb{B}}
can be thought of as a commuting diagram of \wwb{n-1}cube maps, denoted by \w[:]{\partial\sp{j}\mathfrak{F}}
\begin{myeq}\label{diagrampa}
\xymatrix@R=15pt {  \pa{j}{0}\Ac \ar[d]_{\partial\sp{j}\Ac} \ar[rr]^{\pa{j}{0}\mathfrak{F}}  &&
 \pa{j}{0}\mathbb{B} \ar[d]^{\partial\sp{j}\mathbb{B}}  \\
\pa{j}{1}\Ac \ar[rr]^{\pa{j}{1} \mathfrak{F}} && \pa{j}{1}\mathbb{B}
}
\end{myeq}

\end{defn}

\begin{lemma}\label{paC}
\w{\partial^{m} \mathbb{C}^{(m)}X = i^{\mathbb{C}^{(m-1)}X} }
\end{lemma}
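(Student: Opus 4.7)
The plan is to prove this by directly unwinding the definitions of the cubes involved; there is no real obstacle here beyond bookkeeping, so the main task is to make the indexing match.

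First I would identify the two restrictions. For $\vva \in \In{m-1}{}$, the definition of $\mathbb{C}^{(m)}X$ gives $\mathbb{C}^{(m)}X(\vva,0) = C^{\deg(\vva,0)}X = C^{\deg(\vva)}X = \mathbb{C}^{(m-1)}X(\vva)$. For an edge $\vva\to\vvb$ in $\In{m-1}{}$ obtained by flipping slot $j<m$, with $k$ ones before slot $j$, the induced map $\mathbb{C}^{(m)}X(\vva,0)\to\mathbb{C}^{(m)}X(\vvb,0)$ is $C^{\deg(\vva,0)-k}(i^{C^k X}) = C^{\deg(\vva)-k}(i^{C^k X})$, which is exactly the corresponding edge in $\mathbb{C}^{(m-1)}X$. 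Hence $\pa{m}{0}\mathbb{C}^{(m)}X = \mathbb{C}^{(m-1)}X$.

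Next I would treat the other face. Here $\mathbb{C}^{(m)}X(\vva,1) = C^{\deg(\vva)+1}X = C(\mathbb{C}^{(m-1)}X(\vva))$, and for a flip in slot $j<m$ the edge becomes $C^{\deg(\vva)+1-k}(i^{C^k X}) = C\bigl(C^{\deg(\vva)-k}(i^{C^k X})\bigr)$, i.e.\ the cone applied to the corresponding edge of $\mathbb{C}^{(m-1)}X$. Therefore $\pa{m}{1}\mathbb{C}^{(m)}X = C\mathbb{C}^{(m-1)}X$, with structure natural in the functoriality of $C$.

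Finally I would compute the natural transformation $\partial^m\mathbb{C}^{(m)}X$. Its component at $\vva$ is the edge $\mathbb{C}^{(m)}X(\vva,0)\to\mathbb{C}^{(m)}X(\vva,1)$ obtained by flipping slot $m$; here the number of ones before slot $m$ is $\deg(\vva)$, so the formula gives $C^{\deg(\vva)-\deg(\vva)}(i^{C^{\deg(\vva)}X}) = i^{C^{\deg(\vva)}X} = i^{\mathbb{C}^{(m-1)}X(\vva)}$. This is precisely the component at $\vva$ of the $(m-1)$-cube inclusion $i^{\mathbb{C}^{(m-1)}X} : \mathbb{C}^{(m-1)}X\to C\mathbb{C}^{(m-1)}X$ from Definition \ref{cone of cube}. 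Naturality of $i^{(-)}$ with respect to the maps $C^{\deg(\vva)-k}(i^{C^k X})$ shows that the two $(m-1)$-cube maps agree, completing the identification $\partial^m\mathbb{C}^{(m)}X = i^{\mathbb{C}^{(m-1)}X}$.
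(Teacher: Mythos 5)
Your proof is correct and follows exactly the route the paper intends: the paper's own proof is only the terse pointer ``See Definitions \ref{pa} and \ref{cone of cube}, exemplified in \eqref{eqc2x},'' and your computation spells out precisely what that pointer compresses, namely matching vertices and edges on the two faces and identifying the face map with the cone inclusion. The only cosmetic remark is that the final naturality sentence is redundant: once the components agree at each vertex $\vva$, the two $(m-1)$-cube maps are already equal, since both are natural transformations by construction.
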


\begin{proof}
See Definitions \ref{pa} and \ref{cone of cube}, exemplified in \wref[.]{eqc2x}
\end{proof}

\begin{defn}\label{Gen cof and hcof}
Given an $m$-cube \w{\Ac} in $\Dd$, its \emph{cofiber} \w{\cf{m}{\Ac}} is the colimit of
the diagram \w{\RRR \Ac:\Ln{m}{}\to\Dd} given by \w{\pa{m+1}{0}\RRR \Ac =\Ac} and \w{\RRR \Ac(\vva)=\ast} elsewhere.

The \emph{homotopy cofiber} of \w{\Ac} is the colimit \w{\hcof^{(m)}(\Ac)} of
the diagram \w{R\Ac:\Ln{m}{} \to\Dd} obtained from $\RRR$ by replacing the values $\ast$ with the corresponding cones.
The natural map \w{\hcof^{(n)}(\Ac) \to \cof^{(n)}(\Ac) } by \w[.]{\zeta_{\Ac}}

Thus if $\Ac$ is the following square:
\mybbbdiag[\label{eqhcsq}]{
X \ar[rr]^{h} \ar[d]_{f}  &&  Z \ar[d]^{g}    \\
Y \ar[rr]_{k} && W
}
then \w{\RRR \Ac} is the left hand diagram and \w{R\Ac} is the right hand diagram in
$$
\xymatrix@R=20pt {  X \ar[d]^{f} \ar[r]^{h} \ar[rrd] & Z \ar[d]_{g}
\ar[rrd] & & \\
Y \ar[r]^{k} \ar[rrd] & W   & \ast \ar[d] \ar[r]
 &  \ast  \\
& & \ast  &   } \hspace*{10mm}
\xymatrix@R=20pt {  X \ar[d]^{f} \ar[r]^{h}
\ar@{^{(}->}[rrd]^>>>>>>{i^X} & Z \ar[d]_{g}
\ar@{^{(}->}[rrd]^{i^Z} & & \\
Y \ar[r]^{k} \ar@{^{(}->}[rrd]^>>>>>>>>{i^Y} & W
& CX \ar[d]^{Cf} \ar[r]^{Ch}
 &  CZ  \\
& & CY  &   }
$$
\end{defn}

\begin{remark}\label{cofzero}
If \w{f: X \to Y} is a map in \w[,]{\mathcal{D}} then \w[]{\cof^{(1)}(f)} is the usual cofiber
of \w[,]{f} and similarly for the homotopy cofiber.
\end{remark}
\begin{example}\label{cof(CX)}
For any \w[,]{X\in\Dd} \w[]{\cof^{(m)}(\mathbb{C}^{(m)} X)=
 \ee{m}{X} } and \w[.]{\hcof^{(m)}(\mathbb{C}^{(m)} X)=L^m X}
(see Definition \ref{CC^mX})
\end{example}
\begin{lemma}\label{cof=cof}
Applying \w{\cof^{(n-1)}} to the diagram  \w{\partial^{j}\mathfrak{F}} of \wref{diagrampa} yields:
\myrrrdiag[\label{coffdiag}]{ \cof^{(n-1)}(\pa{j}{0}\Ac) \ar[d]_{\cof^{(n-1)}(\partial^{j}\Ac)}
\ar[rr]^{\cof^{(n-1)}(\pa{j}{0}\mathfrak{F})}  &&
 \cof^{(n-1)}(\pa{j}{0}\mathbb{B}) \ar[d]^{\cof^{(n-1)}(\partial^{j}\mathbb{B})}  \\
\cof^{(n-1)}(\pa{j}{1}\Ac) \ar[rr]^{\cof^{(n-1)}(\pa{j}{1} \mathfrak{F})}
 && \cof^{(n-1)}(\pa{j}{1}\mathbb{B})
}
and \w{\cof^{(n)}(\mathfrak{F})} is just \w{cof^{(1)}} applied to the horizontal $1$-cube map in \wref[.]{coffdiag}
\end{lemma}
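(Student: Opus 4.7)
The plan is to handle the two claims in the statement separately, since they are of rather different flavors.

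The first assertion is essentially functoriality. By Definition \ref{pa}, the object $\partial^j\mathfrak{F}$ is a commutative square whose four corners $\partial^j_0\mathcal{A}$, $\partial^j_1\mathcal{A}$, $\partial^j_0\mathbb{B}$, $\partial^j_1\mathbb{B}$ are $(n-1)$-cubes and whose four edges are $(n-1)$-cube maps (the two ``outer'' edges come from $\partial^j\mathcal{A}$ and $\partial^j\mathbb{B}$, the two ``inner'' edges from $\partial^j_0\mathfrak{F}$ and $\partial^j_1\mathfrak{F}$). Since $\cof^{(n-1)}$, viewed as a functor from $(n-1)$-cube maps to arrows of $\Dd$, sends each $(n-1)$-cube map to the induced arrow between cofibers (by the universal property of colimits), applying it at each corner and each edge of $\partial^j\mathfrak{F}$ immediately produces the square \eqref{coffdiag}. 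One only needs to observe that this functor strictly preserves commutativity of squares, which again is an instance of the universal property.

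For the second assertion, the key tool is the commutativity of colimits (Fubini for colimits). Recall that $\cof^{(n)}(\mathfrak{F})$ is by definition the colimit of the diagram $\RRR \mathfrak{F}$ supported on the indexing category that places $\mathfrak{F}$ on one face of an $(n+1)$-cube-shape and star-values on the complementary vertices (see Definition \ref{Gen cof and hcof}). My plan is to split this indexing category along the $j$-th coordinate of $\mathcal{A}$ and $\mathbb{B}$ (an internal coordinate of the input cubes, \emph{not} the new coordinate introduced by the cofiber). Under this split, the restriction of $\RRR\mathfrak{F}$ to the $a_j=0$ face is, up to reindexing, the diagram $\RRR(\partial^j_0\mathfrak{F})$ defining $\cof^{(n-1)}(\partial^j_0\mathfrak{F})$, and similarly for $a_j=1$. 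The bridging morphisms between the two halves assemble into the $1$-cube map given by the two horizontal arrows of \eqref{coffdiag} together with its vertical structure. Taking the outer colimit after the inner $\cof^{(n-1)}$'s then becomes $\cof^{(1)}$ applied to that horizontal $1$-cube map, as claimed.

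The main point requiring care is the bookkeeping of the $\ast$-valued vertices in $\RRR\mathfrak{F}$: one must check that when the indexing category is cut along the $j$-th coordinate, the $\ast$'s distribute correctly so that each half is genuinely isomorphic (as a diagram) to a $\RRR(\partial^j_\varepsilon\mathfrak{F})$, and that the resulting ``gluing'' arrow in the $j$-direction is exactly the functorial map $\cof^{(n-1)}(\partial^j\mathcal{A})\to\cof^{(n-1)}(\partial^j\mathbb{B})$ appearing in \eqref{coffdiag}. Once this combinatorial identification of indexing shapes is in place, the factorization of the colimit — and hence the identification of $\cof^{(n)}(\mathfrak{F})$ with $\cof^{(1)}$ of the horizontal $1$-cube map of \eqref{coffdiag} — follows formally from Fubini. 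I expect this indexing-category verification to be the only nontrivial step.
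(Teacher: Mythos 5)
The paper itself states Lemma~\ref{cof=cof} without proof, so there is no official argument to compare against; I assess your proposal on its own terms. Your first paragraph (functoriality of \w{\cof^{(n-1)}} applied at each corner and edge of the square \w{\partial^{j}\mathfrak{F}}) is fine. It is the Fubini step where I see a genuine gap, and it is not the kind that closes under ``more bookkeeping of $\ast$'s.''

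The problem is that slicing \w{\Ln{n}{}} along the $j$-th coordinate of the \emph{input} cubes does not produce a pushout shape. Concretely, \w{\Ln{n}{}\subset\In{n+1}{}} splits into the two disjoint fibres \w{\{a_j=0\}} and \w{\{a_j=1\}}. The fibre \w{\{a_j=1\}} is indeed isomorphic to \w{\Ln{n-1}{}} (the top vertex of \w{\In{n+1}{}} is excluded), and on it \w{\RRR\Ac} restricts to \w{\RRR(\pa{j}{1}\Ac)}, so its colimit is \w{\cof^{(n-1)}(\pa{j}{1}\Ac)} as you say. But the fibre \w{\{a_j=0\}} is isomorphic to the \emph{full} cube \w{\In{n}{}} — it still contains the vertex \w{\rk=\vec{r}_j}, where \w{\RRR\Ac} has value $\ast$ and which is terminal in that fibre — so the colimit of the restriction over that fibre is \w{\ast}, \emph{not} \w{\cof^{(n-1)}(\pa{j}{0}\Ac)}. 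Your identification ``each half is genuinely isomorphic to a \w{\RRR(\pa{j}{\varepsilon}\mathfrak{F})}'' therefore fails on the \w{a_j=0} side. Moreover, because the two fibres are disjoint, passing to the outer colimit along the \w{a_j} direction produces a colimit over \w{\In{1}{}} (a single arrow, whose colimit is just the target), not over the cofiber shape \w{\Ln{1}{}}; so even granting the identifications, the ``outer colimit'' would not be \w{\cof^{(1)}}.

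What does work is a slightly different decomposition. Take the two sieves (downward-closed subposets) \w{V=\{a_j=0\}\cong\In{n}{}} and \w{U=\Ln{n}{}\setminus\{\rk\}}; their union is \w{\Ln{n}{}} and their intersection \w{U\cap V=\{a_j=0\}\setminus\{\rk\}\cong\Ln{n-1}{}}. Then the colimit over \w{\Ln{n}{}} is the pushout of \w{\colim_V\gets\colim_{U\cap V}\to\colim_U}. Here \w{\colim_V\RRR\Ac=\ast} (terminal vertex is $\ast$), \w{\colim_{U\cap V}\RRR\Ac=\cof^{(n-1)}(\pa{j}{0}\Ac)} (that restriction really is \w{\RRR(\pa{j}{0}\Ac)}), and \w{\colim_U\RRR\Ac=\cof^{(n-1)}(\pa{j}{1}\Ac)} because the subposet \w{\{a_j=1\}\cong\Ln{n-1}{}} is cofinal in \w{U}. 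So the pushout is \w{\cof^{(1)}(\cof^{(n-1)}(\pa{j}{0}\Ac)\to\cof^{(n-1)}(\pa{j}{1}\Ac))}, as required; applying the same computation naturally in $\mathfrak{F}$ gives the lemma. Note also a minor terminological slip: \w{\cof^{(n)}(\mathfrak{F})} in the paper's usage is the \emph{induced map} \w{\cof^{(n)}(\Ac)\to\cof^{(n)}(\Bc)} (as Propositions~\ref{nToda=cofb circ cofa} and \ref{prop1} make explicit), not a single colimit over an \w{(n+1)}-shape; your opening sentence of the second paragraph should be revised accordingly. The overall strategy — functoriality, then Fubini — is right; the proof needs this sharper decomposition and the cofinality input to actually close.
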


\begin{remark}\label{hcof=hcof}
Similarly, for any strongly cofibrant $n$-cube \w{\Ac} the homotopy cofiber \w{\hcf{n}{\Ac}}  is the colimit of:
$$
\xymatrix@R=13pt { \hcf{n-1}{\pa{n}{0}\Ac} \ar[rrr]^{\hcf{n-1}{\partial^{n}\Ac}} \ar@{^{(}->}[d]  &&& \hcf{n-1}{\pa{n}{1}\Ac} \\
C \Ac(1,\dots,1,0)
}
$$

For example,  the homotopy cofiber of any strongly cofibrant square \wref{eqhcsq} is the colimit of
\w[,]{ CY \hookleftarrow \hcof(f) \rightarrow \hcof(g) } where \w[]{\hcof(f)\hra CY} is the induced map in
$$
\xymatrix@R=15pt @C=10pt{  X \ar@{^{(}->}[d]_{i^X} \ar@{^{(}->}[r]^{f}  & Y \ar@/^{1.5pc}/[ddr]^{i^Y} \ar[d]   \\
CX \ar@/_{1.5pc}/[drr]_{Cf} \ar[r]  & \hcof(f) \ar@{-->}[dr]^{}  \\
& & CY
}
$$
\end{remark}

\begin{prop}\label{gen hcolim=cof}
If \w{\Ac} is a strongly cofibrant $n$-cube, the natural map \w{\zeta_{\Ac} : \hcof^{(n)}(\Ac) \to \cof^{(n)}(\Ac)} is weak equivalence.
\end{prop}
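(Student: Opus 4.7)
The plan is to argue by induction on $n$. For the base case $n=1$, strong cofibrancy of a $1$-cube amounts to a cofibration $f:X\hookrightarrow Y$ with $X$ cofibrant, so that $\zeta_{\Ac}$ coincides with the map $\zeta_{f}:\hcof(f)\to\cof(f)$, which is a weak equivalence by right properness of $\Dd$, as already recorded in Notation \ref{cone defn} and Remark \ref{cofzero}.

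For the inductive step, I would view the $n$-cube $\Ac$ as a $1$-cube of $(n-1)$-cubes, namely the morphism $\partial^{n}\Ac:\pa{n}{0}\Ac\to\pa{n}{1}\Ac$. Iterating Lemma \ref{cof=cof} and using the description in Remark \ref{hcof=hcof} gives
$$\cf{n}{\Ac}\;=\;\cf{1}{\cf{n-1}{\partial^{n}\Ac}},\qquad \hcf{n}{\Ac}\;=\;\hcf{1}{\hcf{n-1}{\partial^{n}\Ac}},$$
and the map $\zeta_{\Ac}$ is obtained by applying $\cof^{(1)}$ and $\hcof^{(1)}$ respectively to the commuting square with vertical arrows $\zeta_{\pa{n}{\varepsilon}\Ac}$ for $\varepsilon=0,1$ and horizontal arrows $\hcof^{(n-1)}(\partial^{n}\Ac)$ and $\cof^{(n-1)}(\partial^{n}\Ac)$.

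The inductive step then splits into three ingredients. First, the filtration by degree that witnesses strong cofibrancy of $\Ac$ on $\In{n}{}$ restricts to analogous filtrations of the subposets $\pa{n}{\varepsilon}\In{n}{}\cong \In{n-1}{}$, so both $\pa{n}{0}\Ac$ and $\pa{n}{1}\Ac$ are strongly cofibrant $(n-1)$-cubes; by the inductive hypothesis the vertical maps $\zeta_{\pa{n}{\varepsilon}\Ac}$ are weak equivalences. Secondly, I would show that the induced map $\cof^{(n-1)}(\partial^{n}\Ac):\cof^{(n-1)}(\pa{n}{0}\Ac)\to\cof^{(n-1)}(\pa{n}{1}\Ac)$ is a cofibration between cofibrant objects: this is forced by the latching conditions on the vertices of $\pa{n}{1}\Ac$, each of which involves the preceding back-face objects glued over the corresponding front-face objects (this is the generalisation of the observation in Example \ref{cofsqu} and Lemma \ref{cofibrant square induces cofibration}). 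Finally, the $n=1$ case applied to this cofibration shows that $\zeta$ applied to the bottom arrow of the square is a weak equivalence, and a standard gluing argument using the pushouts defining $\hcof^{(1)}$ and $\cof^{(1)}$, together with left properness (gluing of weak equivalences along cofibrations), yields that $\zeta_{\Ac}$ is a weak equivalence.

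The main obstacle is the second ingredient: checking that strong cofibrancy of $\Ac$ descends to a cofibration on $(n-1)$-dimensional cofibers. This is really a statement about pushout-products of projective cofibrations, and I would prove it by a secondary induction on the degree filtration of $\pa{n}{1}\In{n}{}$, observing at each step that passage to the cofiber $\cof^{(n-1)}$ sends latching colimits computed in $\In{n}{}$ to the corresponding latching colimits computed in the smaller cube category, since all vertices on the front face $\pa{n}{0}\Ac$ become $\ast$ in the cofiber construction; the latching map at a vertex of $\pa{n}{1}\Ac$ is then exactly a pushout of the latching map for $\Ac$ at that vertex, and hence a cofibration.
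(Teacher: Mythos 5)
Your overall strategy — induction on $n$, decomposing the $n$-cube $\Ac$ along its last direction and applying a gluing argument in the proper model category — is exactly the approach the paper takes. But there is a concrete error in the way you set it up. The identity $\hcf{n}{\Ac} = \hcf{1}{\hcf{n-1}{\partial^{n}\Ac}}$ is false. Remark~\ref{hcof=hcof} exhibits $\hcf{n}{\Ac}$ as the pushout of
$$
C\Ac(1,\dots,1,0)\ \hookleftarrow\ \hcf{n-1}{\pa{n}{0}\Ac}\ \longrightarrow\ \hcf{n-1}{\pa{n}{1}\Ac},
$$
where the cone is over the single object $\Ac(1,\dots,1,0)$, the terminal vertex of the back face. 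Your expression $\hcf{1}{\hcf{n-1}{\partial^{n}\Ac}}$ is instead the pushout whose left leg is the cone inclusion $\hcf{n-1}{\pa{n}{0}\Ac}\hookrightarrow C\hcf{n-1}{\pa{n}{0}\Ac}$. Already in the $n=2$ case of the square \eqref{eqhcsq} these differ: the first uses $CY$ and the second uses $C\hcof(f)$, and the left legs are genuinely different maps. So the object you call $\hcf{1}{\hcf{n-1}{\partial^{n}\Ac}}$ is not literally $\hcf{n}{\Ac}$, and the map you build by applying $\hcof^{(1)}$ and $\cof^{(1)}$ to your square is not $\zeta_{\Ac}$. (By contrast, your analogous identity for the strict cofiber, $\cf{n}{\Ac} = \cf{1}{\cf{n-1}{\partial^{n}\Ac}}$, is fine, since replacing the cone by $\ast$ collapses the distinction.)

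The paper avoids this by working directly with the span of Remark~\ref{hcof=hcof}. The three back-to-front weak equivalences between the span computing $\hcf{n}{\Ac}$ and the span computing $\cf{n}{\Ac}$ are then: the two inductive hypotheses $\zeta_{\pa{n}{\varepsilon}\Ac}$ for $\varepsilon=0,1$, and the map $C\Ac(1,\dots,1,0)\to\ast$, which is a weak equivalence because cones are contractible. The gluing step then works because the back left leg is a cofibration by construction and the front horizontal leg $\cf{n-1}{\partial^{n}\Ac}$ is a cofibration by the cube generalization of Lemma~\ref{cofibrant square induces cofibration}. Your sketch of a secondary induction for that cofibration claim is a reasonable addition — the paper merely cites the $2$-cube lemma and leaves the general case implicit — so this part of your argument is actually supplying a detail the published proof glosses over. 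If you replace the incorrect iterated-$\hcof$ identity with the explicit pushout description from Remark~\ref{hcof=hcof}, and include the contractibility of $C\Ac(1,\dots,1,0)$ as the third weak equivalence, your proof becomes essentially the paper's.
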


\begin{proof}
By induction on \w[:]{n} we know it is true for \w[.]{n=1} If the statement holds
for\w[,]{n=m-1} let \w{\Ac} be a strongly cofibrant $m$-cube; by hypothesis
\w{\hcf{m-1}{\pa{m}{0}\Ac} \to \cf{m-1}{\pa{m}{0}\Ac}} and \w{\hcf{m-1}{\pa{m}{1}\Ac} \to \cf{m-1}{\pa{m}{1}\Ac}} are  weak equivalences,
so the following diagram:
$$
\xymatrix@R=15pt @C=10pt { \hcf{m-1}{\pa{m}{0}\Ac} \ar[rrrrd]^{\simeq} \ar[rrr]^{\hcf{m-1}{\partial^{m}\Ac}} \ar@{^{(}->}[d]  &&& \hcf{m-1}{\pa{m}{1}\Ac} \ar[rrrd]^{\simeq} \\
 C \Ac(1,\dots,1,0)  \ar[rrrrd] &&&&  \cf{m-1}{\pa{m}{0}\Ac} \ar[rr]_{\cf{m-1}{\partial^{m}\Ac}} \ar[d]  && \cf{m-1}{\pa{m}{1}\Ac} \\
&&&& \ast
}
$$
induces the required weak equivalence on pushouts (since the upper front map is a cofibration, as in Lemma \ref{cofibrant square induces cofibration}).
\end{proof}

\begin{corollary}\label{hcof(CX)}
For any cofibrant \w[,]{X\in\Dd} the natural map \w{\zeta_{\mathbb{C}^{(m)}X}: L^m X \to \ee{m}{X} } is a weak equivalence.
\end{corollary}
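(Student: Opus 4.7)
The plan is to derive this corollary as a direct specialization of Proposition \ref{gen hcolim=cof}, combined with the identifications recorded in Example \ref{cof(CX)}. Concretely, I would take $\Ac := \mathbb{C}^{(m)}X$ in Proposition \ref{gen hcolim=cof}; the proposition then delivers exactly the weak equivalence $\zeta_{\mathbb{C}^{(m)}X} : \hcof^{(m)}(\mathbb{C}^{(m)}X) \to \cof^{(m)}(\mathbb{C}^{(m)}X)$, and by Example \ref{cof(CX)} the source is $L^m X$ and the target is $\widetilde{\Sigma}^{m} X$, which is the statement to be proved.

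The only thing that is not literally handed to us by the two cited results is the hypothesis of Proposition \ref{gen hcolim=cof}, namely that $\mathbb{C}^{(m)}X$ is a strongly cofibrant $m$-cube whenever $X$ is cofibrant. This is already asserted parenthetically in Remark \ref{rtwocubes}, but I would include a brief justification by induction on $m$, filtering $\In{m}{}$ by degree: at filtration level $0$ one has the single object $X$, which is cofibrant by assumption; and at each subsequent step the new maps in $\mathbb{C}^{(m)}X$ are of the form $C^{\deg(\vva)-k}(i^{C^k X})$, that is, iterated cones of the cone inclusion $i^{C^k X} : C^k X \hookrightarrow C^{k+1} X$. Since $i^Y$ is a cofibration for every cofibrant $Y$ by the definition in Notation \ref{cone defn} (as a pushout of a cofibration in the functorial cylinder), and cones of cofibrations are cofibrations, the strong cofibrancy condition of Definition \ref{rprojmodst} is satisfied at every stage.

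The main (and only) obstacle is really this verification of strong cofibrancy; everything else is a formal assembly. Once it is in place, the proof reduces to the two-line invocation: apply Proposition \ref{gen hcolim=cof} and substitute the identifications of Example \ref{cof(CX)}.
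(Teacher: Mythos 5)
Your overall strategy is exactly the paper's: the corollary is stated with no proof precisely because it is the immediate specialization of Proposition \ref{gen hcolim=cof} to $\Ac = \mathbb{C}^{(m)}X$, combined with the identifications $\hcof^{(m)}(\mathbb{C}^{(m)}X)=L^mX$ and $\cof^{(m)}(\mathbb{C}^{(m)}X)=\widetilde{\Sigma}^m X$ from Example \ref{cof(CX)}. You also correctly isolate the one hypothesis that needs checking, namely strong cofibrancy of $\mathbb{C}^{(m)}X$ for cofibrant $X$, which the paper itself only asserts parenthetically in Remark \ref{rtwocubes}.

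However, the justification you offer for that hypothesis has a real gap. You argue that each \emph{edge} of $\mathbb{C}^{(m)}X$ is a cofibration (as an iterated cone of a cone inclusion), and then conclude that the strong cofibrancy condition of Definition \ref{rprojmodst} holds ``at every stage.'' But Definition \ref{rprojmodst} demands more than cofibration edges: at each filtration step it requires the map from the \emph{colimit} of the lower part of the diagram (the latching object) into the new vertex to be a cofibration. Example \ref{cofsqu} makes this distinction explicit for squares: one needs $X$ cofibrant, the two outgoing edges cofibrations, \emph{and} the induced map out of the pushout $P$ to be a cofibration. Already for $\mathbb{C}^{(2)}X$ this means verifying that the latching map $\Sigma X = CX\cup_X CX \to C^2X$ is a cofibration, which does not follow merely from the edges $i^X$ and $i^{CX}$, $C(i^X)$ being cofibrations. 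The correct argument uses the simplicial structure: writing $\mathbb{C}^{(m)}X \cong X\wedge\mathbb{K}$ for the appropriate Reedy cofibrant $m$-cube $\mathbb{K}$ of pointed simplicial sets (each edge being the inclusion $S^0\hookrightarrow\Delta^1$ with $\Delta^1$ pointed at $1$), the pushout-product (SM7) axiom gives that every latching map of $\mathbb{C}^{(m)}X$ is a cofibration when $X$ is cofibrant. With this repair, the rest of your assembly is correct and matches the paper.
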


\begin{lemma}\label{Important2}
If \w{\Ac} is a strongly cofibrant $n$-cube, where \w{\Ac(\vva)\simeq \ast} for every \w[,]{\vva \neq (1,\dots,1)}
then the structure map \w{r^{\Ac}: \Ac(1,\dots,1) \to \cf{n}{\Ac}} is a weak equivalence.
\end{lemma}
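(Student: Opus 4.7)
The plan is to proceed by induction on $n$. For the base case $n = 1$, $\Ac$ is a single map $f\colon \Ac(0) \to \Ac(1)$ which, by strong cofibrancy, is a cofibration between cofibrant objects, with $\Ac(0) \simeq \ast$. Since $\cf{1}{\Ac}$ is the pushout of $\ast \leftarrow \Ac(0) \xrightarrow{f} \Ac(1)$, left properness of $\Dd$ applied to the weak equivalence $\Ac(0)\to\ast$ along the cofibration $f$ shows that $r^{\Ac}\colon \Ac(1) \to \cf{1}{\Ac}$ is a weak equivalence.

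For the inductive step, I would split $\Ac$ along the $n$-th coordinate into the $(n-1)$-cube map $\partial^{n}\Ac\colon \pa{n}{0}\Ac \to \pa{n}{1}\Ac$. Both restrictions are strongly cofibrant $(n-1)$-cubes. Every vertex of $\pa{n}{0}\Ac$ has last coordinate $0$, hence differs from $(1,\dots,1)\in\In{n}{}$ and is contractible, so the inductive hypothesis yields $\cf{n-1}{\pa{n}{0}\Ac}\simeq\ast$. The cube $\pa{n}{1}\Ac$ has corner $\Ac(1,\dots,1)$ and all other vertices contractible, so the inductive hypothesis again gives a weak equivalence $r^{\pa{n}{1}\Ac}\colon \Ac(1,\dots,1)\to\cf{n-1}{\pa{n}{1}\Ac}$. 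By the single-cube form of Lemma \ref{cof=cof}, $\cf{n}{\Ac}$ is then the ordinary cofiber of the $1$-cube $\cf{n-1}{\partial^{n}\Ac}\colon \cf{n-1}{\pa{n}{0}\Ac} \to \cf{n-1}{\pa{n}{1}\Ac}$. Applying the base case to this $1$-cube (whose source is contractible) produces a weak equivalence $\cf{n-1}{\pa{n}{1}\Ac}\to\cf{n}{\Ac}$, and composing with $r^{\pa{n}{1}\Ac}$ exhibits $r^{\Ac}$ as a composite of two weak equivalences.

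The main obstacle is verifying that the induced $1$-cube $\cf{n-1}{\partial^{n}\Ac}$ is itself a cofibration between cofibrant objects, so that the base case may be applied to it. This is the $n$-cube analogue of Lemma \ref{cofibrant square induces cofibration} and must be carried along in parallel with the main induction: the strong cofibrancy of $\Ac$, together with the explicit iterated pushout formula realizing $\cf{n-1}{-}$, propagates to produce a cofibration between cofibrant objects at the level of $(n-1)$-cofibers. Once this cofibrancy bookkeeping is established, the inductive step is a formal consequence of left properness of $\Dd$.
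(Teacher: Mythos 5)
The paper states Lemma \ref{Important2} without giving a proof, so there is nothing in the text to compare against; your argument fills the gap, and its inductive scheme is sound. The base case is the standard left-properness (gluing) argument for pushing the weak equivalence $\Ac(0)\to\ast$ out along the cofibration $\Ac(0)\to\Ac(1)$. In the inductive step you correctly decompose $\cf{n}{\Ac}$ as the ordinary cofiber of the $1$-cube $\cf{n-1}{\partial^{n}\Ac}$ and factor $r^{\Ac}$ as the composite $\Ac(1,\dots,1)\xrightarrow{r^{\pa{n}{1}\Ac}}\cf{n-1}{\pa{n}{1}\Ac}\to\cf{n}{\Ac}$, applying the inductive hypothesis to both faces: for $\pa{n}{0}\Ac$ the hypothesis forces its entire $(n-1)$-cofiber to be contractible (its final vertex is $\Ac(1,\dots,1,0)\simeq\ast$), while for $\pa{n}{1}\Ac$ it makes $r^{\pa{n}{1}\Ac}$ a weak equivalence.

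The cofibrancy bookkeeping you flag at the end is exactly the right thing to worry about, and is the one nontrivial missing ingredient. What is needed, and what should be isolated as an auxiliary lemma carried alongside the main induction, is: (i) both faces $\pa{n}{0}\Ac$ and $\pa{n}{1}\Ac$ of a strongly cofibrant $n$-cube are strongly cofibrant $(n-1)$-cubes, and (ii) $\cf{n-1}{\partial^{n}\Ac}$ is a cofibration between cofibrant objects, so the base case may legitimately be applied to it. Point (i) follows from the latching-map description of strong cofibrancy in Definition \ref{rprojmodst}: for $\pa{n}{0}\Ac$ the latching objects agree with those of $\Ac$, and for $\pa{n}{1}\Ac$ the latching map factors as the cobase change of the latching map of $\pa{n}{0}\Ac$ followed by the latching map of $\Ac$ at the corresponding vertex, hence is a cofibration. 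Point (ii) is the $n$-cube extension of Lemma \ref{cofibrant square induces cofibration}, proved by the same recursion once (i) is in hand. The paper already relies on such an extension implicitly, for instance in the proof of Proposition \ref{gen hcolim=cof}, so stating it once would make both arguments airtight; with that lemma supplied your proof is complete.
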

\begin{remark}\label{hcoftoB}
Assume given an $n$-cube map \w{\mathfrak{F} : \Ac \to \Bc} such that, for every \w[,]{\vva \neq (1,\dots,1)} \w{\mathfrak{F}(\vva):\Ac(\vva)\to\Bc(\vva)=C\Ac(\vva)} is the cone inclusion. We can
extend \w{\mathfrak{F}} to a map \w{R\Ac\to \mathbb{Q}} of \ww{\Ln{n}{}}-diagrams, where \w{\pa{m+1}{0}\mathbb{Q}=\Bc} and all the new maps are identities. Taking colimits yields a map
\w[,]{\hcof'^{(n)}(\mathfrak{F}) : \hcof^{(n)}(\Ac) \to \mathbb{B}(1,\dots,1)} where \w{\mathbb{B}(1,\dots,1)} is a homotopy cofiber of the cube \w[.]{\Bc}

Observe also that the map \w{\hcof'^{(n)}(\mathfrak{F}) } is obtained from the maps:
\begin{itemize}
\item   \w{ R\Ac(\vec{r}_{k})=C\Ac(\vec{r}_{k})=\Bc(\vec{r}_{k}) \to \Bc(1,\dots,1) }for all \w{1 \leq k \leq n}
\item    \w{R\Ac(\vec{r}_{n+1})=\Ac(1,\dots,1)\stk{\mathfrak{F}(1,\dots,1)} \Bc(1,\dots,1) }
\end{itemize}
\end{remark}
\begin{prop}\label{T=AcircB}
Given an $n$-order cubical Toda system\w[,]{\XF} then:
$$
\Tm{n}{1}\XF =\hcof'^{(n)}(\BBB{n}{2}) \circ \hcf{n}{\AAA{n}{1}}
$$
\end{prop}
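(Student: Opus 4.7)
The plan is to compare the two maps from $L^n X_1=\hcof^{(n)}(\mathbb{C}^{(n)}X_1)$ (using Example \ref{cof(CX)}) to $X_{n+3}$ by testing them against the cocone defining $L^n X_1$ as a colimit over $\Ln{n}{}$. Since $\Ln{n}{}$ is a poset whose maximal elements are the $n+1$ vertices $\rk$ for $1\le k\le n+1$, a map out of $L^n X_1$ is determined by its restrictions to the cocone components indexed by these maximal vertices, that is, by the structure maps $\mathbb{L}^{(n)}X_1(\rk)=C^n X_1\to L^n X_1$.

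The first step is to record, from Definition \ref{Gen defn 1 of 2}, that $\Tm{n}{1}\XF$ restricted to the $\rk$-component is the map
$$\Fm{n-k+1}{k+1}\circ C^{n-k+1}(\Fm{k-1}{1}) : C^n X_1 \longrightarrow X_{n+3}.$$
In particular, this specializes to $f_{n+2}\circ\Fm{n}{1}$ when $k=n+1$ and to $\Fm{n}{2}\circ C^n(f_1)$ when $k=1$. These are the values against which we must test the right-hand side.

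The second step is to compute $\hcof'^{(n)}(\BBB{n}{2})\circ\hcf{n}{\AAA{n}{1}}$ on the same cocone components. The functor $\hcof^{(n)}$ applied to $\AAA{n}{1}$ is induced by its level-wise action on the associated $\Ln{n}{}$-diagrams. For $k\le n$, the $\rk$-cocone of $L^n X_1$ is $C\,\mathbb{C}^{(n)}X_1(\vva)$ where $\vva\in\In{n}{}$ has $0$ in position $k$ and $1$ elsewhere, so $\hcf{n}{\AAA{n}{1}}$ acts there by $C(\AAA{n}{1}(\vva))=C^{n-k+1}\Fm{k-1}{1}$, landing in $C\Mm{n}{2}(\vva)=\xm{n}{n+3}(\vva)=C^{n-k+1}X_{k+1}$. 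For $k=n+1$ the corresponding map is $\AAA{n}{1}(1,\dots,1)=\Fm{n}{1}:C^n X_1\to\Mm{n}{2}(1,\dots,1)=X_{n+2}$. Next, by Remark \ref{hcoftoB}, $\hcof'^{(n)}(\BBB{n}{2})$ is assembled from the $n$-cube edges $\xm{n}{n+3}(\vva)\to\xm{n}{n+3}(1,\dots,1)=X_{n+3}$, which by Definition \ref{xm} equal $\Fm{n-k+1}{k+1}$ for $k\le n$, together with $\BBB{n}{2}(1,\dots,1)=f_{n+2}$ for $k=n+1$. Composing the two layers then yields $\Fm{n-k+1}{k+1}\circ C^{n-k+1}\Fm{k-1}{1}$ for $k\le n$ and $f_{n+2}\circ\Fm{n}{1}$ for $k=n+1$, matching the first step on every $\rk$.

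The main obstacle is the careful index bookkeeping: each vertex $\rk$ in $\Ln{n}{}\subset\In{n+1}{}$ sits over a corresponding face vertex of the underlying $n$-cube $\In{n}{}$, and one must keep track of how the invariants $\init$ and $\deg$ feed into the formulas defining $\Mm{n}{2}$, $\xm{n}{n+3}$, $\AAA{n}{1}$, and $\BBB{n}{2}$. Compatibility of the cocone components with the lower-dimensional face maps of $\Ln{n}{}$, which is needed to ensure that each side really extends to a map out of $L^n X_1$, is built into the cubical Toda relations \eqref{eq2} on the left-hand side and follows from functoriality of $\hcof^{(n)}$ on the right.
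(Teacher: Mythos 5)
Your proof is correct and follows essentially the same approach as the paper's: both arguments verify that the two maps out of $L^n X_1$ agree by checking their restrictions to the cocone components indexed by the maximal vertices $\vec{r}_k$ of $\Ln{n}{}$, identifying each restriction explicitly via Definitions \ref{Gen defn 1 of 2}, \ref{AAA}, \ref{BBB}, \ref{xm} and Remark \ref{hcoftoB}. The extra framing you give (maximal vertices suffice, and the cocone compatibility is supplied by \eqref{eq2} on one side and functoriality on the other) is left implicit in the paper, but the substance is the same.
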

\begin{proof}
By Definition \ref{Gen defn 1 of 2} and Remark \ref{hcoftoB}, it suffices to show that
$$
\Fm{n-(n+1)+1}{n+1+1}\circ C^{n+1-(n+1)}(\Fm{n+1-1}{1})~=~ \BBB{n}{2}(1,\dots,1) \circ R\AAA{n}{1}(\vec{r}_{n+1})
$$
and
$$
\Fm{n-k+1}{k+1}\circ C^{n+1-k}(\Fm{k-1}{1})~=~ (\xm{n}{n+3}(\vec{r}_k) \to \xm{n}{n+3}(1,\dots,1)) \circ R\AAA{n}{1}(\vec{r}_k) \quad ,\quad \forall 1 \leq k \leq n~.
$$

For the first equation note that \w{\Fm{0}{n+2}=f_{n+2}=\BBB{n}{2}(1,\dots,1)}
and \w[]{C^0\Fm{n}{1}=\Fm{n}{1}= \AAA{n}{1}(\vec{r}_{n+1})=R\AAA{n}{1}(\vec{r}_{n+1})} (see Definitions \ref{AAA} and \ref{BBB}).

For the second, note that for \w{1 \leq k \leq n} by Definition \ref{xm} we have
\w{\Fm{n-k+1}{k+1} =\xm{n}{n+3}(\vec{r}_{k}^{(n)}) \to \xm{n}{n+3}(1,\dots,1)} and
\w[.]{
C^{n-k+1}F_1^{(k-1)}=C C^{n-k} \Fm{k-1}{1}= C (\AAA{n}{1}(\vec{r}_k^{(n)}))=R\AAA{n}{1}(\vec{r}_k^{(n+1)})}

\end{proof}
\begin{corollary}\label{ceqimpeqtoda}
If \w{\XF \toss \XG } then \w[.]{\Tm{n}{1}\XF \tos\Tm{n}{1}\XG}
\end{corollary}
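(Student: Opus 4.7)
The plan is to reduce the statement to the homotopy invariance of the homotopy-cofiber constructions used to build the Toda bracket. By Proposition \ref{T=AcircB}, for any cubical Toda system $\XF$ the value $\Tm{n}{1}\XF$ factors as
$$
\Tm{n}{1}\XF \;=\; \hcof'^{(n)}(\BBB{n}{2}\XF) \circ \hcf{n}{\AAA{n}{1}\XF},
$$
so the bracket is a functor of the two-map diagram \eqref{eqtwocubes}, built entirely from homotopy colimits (homotopy cofibers of $n$-cubes and of $n$-cube maps). The strategy is therefore: show that each of these two constructions is homotopy invariant on the relevant diagrams, and then observe that $\toss$ is by definition precisely a zigzag of weak equivalences of such diagrams.

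First I would reduce to the strongly cofibrant case. The proposition preceding Lemma \ref{lfiltr} (and Lemma \ref{lfiltr} itself) allows us to replace both $\XF$ and $\XG$ with strongly cofibrant systems $\XF'$ and $\XG'$, with $\XF\toss\XF'$ and $\XG\toss\XG'$. Transitivity of $\toss$ then reduces the statement to the case where both systems are strongly cofibrant, so that each of $\mathbb{C}^{(n)}X_1$, $\Mm{n}{2}$, $\xm{n}{n+3}$ appearing in \eqref{eqtwocubes} is strongly cofibrant (cf.\ Remark \ref{rtwocubes}), and similarly for the primed system.

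Next I would prove the homotopy invariance statement. Unfolding Definitions \ref{Gen cof and hcof} and Remark \ref{hcoftoB}, both $\hcof^{(n)}$ and $\hcof'^{(n)}$ are colimits over the Reedy-style poset $\Ln{n}{}$ of diagrams built by replacing certain corners by cones. A level-wise weak equivalence between two strongly cofibrant $n$-cubes induces a level-wise weak equivalence between the associated $\Ln{n}{}$-diagrams, and all maps in these diagrams are cofibrations (by definition of strong cofibrancy together with Lemma \ref{cofibrant square induces cofibration}, applied inductively on the filtration). Hence their pushouts are weakly equivalent by standard cube/gluing-lemma arguments. Applied coordinate-wise to a zigzag of weak equivalences of the two-map diagram \eqref{eqtwocubes}, this shows that
$$
\hcof^{(n)}(\AAA{n}{1}\XF) \;\tos\; \hcof^{(n)}(\AAA{n}{1}\XG)
\quad\text{and}\quad
\hcof'^{(n)}(\BBB{n}{2}\XF) \;\tos\; \hcof'^{(n)}(\BBB{n}{2}\XG),
$$
compatibly with the composition.

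Finally, composing these two weak equivalences (using that the intermediate object is a homotopy cofiber common to both) and invoking Proposition \ref{T=AcircB} on each side gives $\Tm{n}{1}\XF \tos \Tm{n}{1}\XG$, as required. The main obstacle is the middle step: verifying carefully that the auxiliary map $\hcof'^{(n)}$ of Remark \ref{hcoftoB} is homotopy invariant, since it is built from a mix of identity maps and the original $n$-cube map; this requires checking strong cofibrancy of the enlarged $\Ln{n}{}$-diagrams, which is where reduction to the strongly cofibrant case becomes essential.
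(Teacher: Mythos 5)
The paper gives no explicit proof of this corollary, but its placement immediately after Proposition~\ref{T=AcircB} makes the intended route clear, and your proposal follows it: express the Toda bracket as $\hcof'^{(n)}(\BBB{n}{2})\circ\hcf{n}{\AAA{n}{1}}$ and then argue that both homotopy-cofiber constructions send level-wise weak equivalences of the two-map diagram \eqref{eqtwocubes} to weak equivalences, so that the value of the bracket only depends on the $\tos$-class of that diagram. Your decomposition into (a) reduce to the strongly cofibrant case, (b) homotopy-invariance via gluing, (c) compose the two pieces, is the right skeleton.

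There is, however, a real gap in step (a)--(b). You replace the two \emph{endpoints} $\XF$ and $\XG$ by strongly cofibrant systems, but $\toss$ is defined (via Definition~\ref{rprojmodst}) as a \emph{zigzag} of level-wise weak equivalences in $\Dd^{\I}$, and the intermediate diagrams in that zigzag need not be strongly cofibrant. The gluing/cube lemma you invoke to pass weak equivalences through the strict colimits defining $\hcof^{(n)}$ and $\hcof'^{(n)}$ requires a cofibrancy hypothesis on \emph{both} sides of each arrow, so you cannot simply apply it ``coordinate-wise to the zigzag.'' The fix is standard but should be said: since the two endpoints are now (projectively) cofibrant, one can take a strongly cofibrant replacement $\mathbb{D}'_i\twoheadrightarrow\mathbb{D}_i$ of each intermediate diagram (Lemma~\ref{lfiltr}), then use the lifting property against these trivial fibrations to replace the given zigzag by one passing only through strongly cofibrant diagrams; only then does the gluing argument apply at every stage. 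Equivalently one can argue once and for all that, between projectively cofibrant objects, isomorphism in $\ho(\Dd^{\I})$ can be realized by a zigzag through cofibrant objects. With this insertion the rest of your argument (in particular your observation that $\hcof'^{(n)}$ is built from the enlarged $\Ln{n}{}$-diagram of Remark~\ref{hcoftoB}, so its homotopy-invariance also needs the cofibrancy of that enlarged diagram -- which again holds once everything is strongly cofibrant) goes through and gives the statement.
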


%
%
\sect{Passage from the cubical to the recursive definitions}
\label{DdTB}
We now give a diagrammatic description of recursive Toda systems, and use this to describe the recursive Toda bracket in terms of
(homotopy) cofibers of cubes. This will allow us to relate the two definitions of general Toda brackets.

\supsect{\protect{\ref{DdTB}}.A}{Diagrammatic interpretation for the recursive system}

We now show that every recursive Toda system is equivalent
to one with a rectified final segment (as in \S \ref{OrdToda}.C), and use this to provide a diagrammatic description for
recursive Toda systems.
\begin{defn}
Given an $n$-th order recursive Toda system \w[,]{\XFn} we define \w{\Rec \XFn} to be the following sequence:
$$
\xymatrix@C=35pt{  X_1 \ar[r]^{f_1} & X_2 \ar[r]^{f_2} & X_3
\ar[r]^{ \bnn{1}{2} \circ r^{f_2}} &
 \cof(\Fn{1}{2}) \ar[r]^>>>>{\bnn{2}{2} \circ r^{\bnn{1}{2}}}
&\dots \cof(\Fn{n-1}{2}) \ar[r]^>>>>{\bnn{n}{2} \circ r^{\bnn{n-1}{2}} }  & \cof(\Fn{n}{2})
}
$$
with nullhomotopies \w{\Gn{m}{1}:=\Fn{m}{1}} and
\w{\Gn{m}{j}:=\ast} for all \w[.]{j \geq 2}
\end{defn}
\begin{lemma}\label{lrecred}
The system \w{\Rec\XFn} is an $n$-th order recursive Toda system (in the extended sense of \S \ref{rredchcx}),
and \w[.]{\Tn{n}{1} (\Rec \XFn) = \Tn{n}{1} \XFn }
\end{lemma}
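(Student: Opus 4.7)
The plan is to verify two things in sequence: that $\Rec\XFn$ satisfies the definition of a recursive Toda system in the extended sense of Remark \ref{rredchcx}, and then to compare the two Toda brackets directly from the recursive construction. Both assertions are essentially encoded in the large commutative diagram appearing in the proof of Theorem \ref{reduced rectifiyng}: the sequence $\Rec\XFn$ is nothing but the rectified chain read out of that diagram as a new linear diagram.

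First I would check that the nullhomotopies $\{\Gn{m}{j}\}$ are valid, i.e.\ that each $\Gn{m}{j}$ is a nullhomotopy of the appropriate composite of $\alpha$--$\beta$ maps in $\Rec\XFn$. At position $j=1$ we set $\Gn{m}{1} := \Fn{m}{1}$, and the validity of this choice will follow once we know (by the induction below) that the composites $\bnn{m-1}{2}(\Rec\XFn) \circ \ann{m-1}{1}(\Rec\XFn)$ coincide with the corresponding composites in $\XFn$. At positions $j\geq 2$ we set $\Gn{m}{j} := \ast$ and must show that the corresponding composite vanishes strictly. At the first level this is immediate: successive maps in $\Rec\XFn$ have the form $(\bnn{k+1}{2}\circ r^{\bnn{k}{2}}) \circ (\bnn{k}{2}\circ r^{\bnn{k-1}{2}})$, and the inner factor $r^{\bnn{k}{2}}\circ \bnn{k}{2}$ is the standard composite of a cofibration with its own cofiber projection, which vanishes strictly. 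At higher levels, since the input nullhomotopies are constant, both $\widetilde{\alpha}$ and $\widetilde{\beta}$ of Definition \ref{ealphabeta} factor through the corresponding cofiber projections, again producing compositions of the form $r \circ i$ which are strictly zero.

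Next I would prove by induction on $m$ the identifications
$$
\ann{m}{1}(\Rec\XFn) = \ann{m}{1}(\XFn), \qquad \bnn{m}{2}(\Rec\XFn) = \bnn{m}{2}(\XFn).
$$
The base case $m=1$ is immediate from Definition \ref{ealphabeta}, because the data $(f_1,f_2,\Fn{1}{1})$ used to build $\ann{1}{1}$ appear verbatim in both systems, as does the data $(f_2,f_3,\Fn{1}{2})$ determining $\bnn{1}{2}$; the trivial nullhomotopy $\Gn{1}{2}=\ast$ of $\Rec\XFn$ plays no role at this step, since $\bnn{1}{2}$ is computed from the second and third maps together with $\Fn{1}{2}$, all of which are inherited unchanged from $\XFn$. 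For the inductive step, the square \wref{eqdefalpha} at stage $(k+1,1)$ in $\Rec\XFn$ consists of the inductively identified maps $\ann{k}{1}$, $\bnn{k}{2}$, and the nullhomotopy $\Fn{k+1}{1}$, and is therefore literally the same square as in $\XFn$; applying Definition \ref{ealphabeta} yields identical $\ann{k+1}{1}$ and $\bnn{k+1}{2}$.

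Finally, by definition $\Tn{n}{1}(\Rec\XFn) = \bnn{n}{2}(\Rec\XFn)\circ \ann{n}{1}(\Rec\XFn)$, which by the induction equals $\bnn{n}{2}(\XFn)\circ \ann{n}{1}(\XFn) = \Tn{n}{1}\XFn$. The main obstacle is keeping track of the codomains of the various cofiber constructions when $\Gn{m}{j}=\ast$: the cofiber of a constant map $\ast: CX \to Y$ is canonically $Y$ itself, but one must ensure that this identification is propagated compatibly through the successive recursive steps. Once fixed consistently, the matching of the defining squares for $\ann{k+1}{1}$ and $\bnn{k+1}{2}$ in the two systems is transparent from the rectification diagram in the proof of Theorem \ref{reduced rectifiyng}.
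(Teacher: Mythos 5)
The overall strategy is reasonable—identify $\ann{m}{1}$ and $\bnn{m}{2}$ in $\Rec\XFn$ with those in $\XFn$ by induction and conclude—but two of the concrete steps are wrong and the gap in the inductive step is substantive.

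First, the base case reasoning for $\bnn{1}{2}$ is incorrect: you assert that "the data $(f_2,f_3,\Fn{1}{2})$ determining $\bnn{1}{2}$ [are] inherited unchanged from $\XFn$," but in $\Rec\XFn$ the third map is $g_3=\bnn{1}{2}\circ r^{f_2}$ (not $f_3$) and the nullhomotopy used is $\Gn{1}{2}=\ast$ (not $\Fn{1}{2}$). The conclusion $\bnn{1}{2}(\Rec\XFn)=\bnn{1}{2}(\XFn)$ is still true, but for a different reason: $\widetilde{\beta}(f_2,g_3,\ast)$ is the unique map through which $g_3$ factors over $r^{f_2}$, and since $g_3$ is already given as $\bnn{1}{2}\circ r^{f_2}$ and $r^{f_2}$ is epic, this recovers $\bnn{1}{2}$. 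This is a real argument, not an identity of data.

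Second, and more seriously, the inductive step is wrong. You claim that the square \wref{eqdefalpha} at $j=1$ "yields identical $\ann{k+1}{1}$ and $\bnn{k+1}{2}$." But by \wref{eqalphbet} the square with data $(\ann{k}{j},\bnn{k}{j+1},\Fn{k+1}{j})$ yields $\ann{k+1}{j}$ and $\bnn{k+1}{j}$; taking $j=1$ gives $\ann{k+1}{1}$ and $\bnn{k+1}{1}$, not $\bnn{k+1}{2}$. The map $\bnn{k+1}{2}$ is produced by the square at $j=2$, whose data are $\ann{k}{2}(\Rec\XFn)$, $\bnn{k}{3}(\Rec\XFn)$, and the constant nullhomotopy $\Gn{k+1}{2}=\ast$—none of which are controlled by the induction you set up. So your induction only establishes $\ann{m}{1}(\Rec\XFn)=\ann{m}{1}(\XFn)$, and leaves $\bnn{m}{2}$ unaddressed beyond $m=1$.

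To close the gap you need to analyze the $j\geq 2$ data in $\Rec\XFn$, where the constant nullhomotopies make things collapse: one checks that $\ann{m}{j}(\Rec\XFn)=\ast$ for $j\geq 2$, that $\bnn{m}{j}(\Rec\XFn)=\bnn{m-1}{j+1}(\Rec\XFn)$ for $m\geq 2,\ j\geq 2$ (both are immediate from Definition \ref{ealphabeta} with $F=\ast$), and hence $\bnn{m}{2}(\Rec\XFn)=\bnn{1}{m+1}(\Rec\XFn)$. The last map is characterized by $\bnn{1}{m+1}(\Rec\XFn)\circ r^{g_{m+1}}=g_{m+2}=\bnn{m}{2}\circ r^{\bnn{m-1}{2}}$; using $\cof(g_{m+1})=\cof(\bnn{m-1}{2})$ (since $r^{\bnn{m-2}{2}}$ is epic, so precomposing by it does not change the cofiber) and $r^{g_{m+1}}=r^{\bnn{m-1}{2}}$, one gets $\bnn{1}{m+1}(\Rec\XFn)=\bnn{m}{2}$ as needed. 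This chain of identifications (or something equivalent) is what is missing; the "cofiber of a constant map" subtlety you flag at the end is exactly where the work lies, but as written the proof does not carry it out.
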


Note that even though \w{\Rec\XFn} is not a recursive higher Toda system in the strict sense of \S \ref{dredchcx}, we can deduce from the
Lemma that the corresponding Toda bracket \w{\Tn{n}{1} (\Rec \XFn)} is in fact homotopy meaningful (despite Remark \ref{rredchcx}).
\begin{lemma}\label{lnullcomp}
Given an $m$-th order recursive Toda system \w[,]{\XFn} for any \w[,]{m \geq 1}
\w{\Fn{m+1}{1}} is a nullhomotopy for the composite \w{\bnn{m}{2} \circ \ann{m}{1}}
if and only if
\mydiagram[\label{eqnullcomp}]{
C\ee{m-1}{X_1} \ar[rrr]^{i^{\ee{m}{X_1}}\circ\, r^{i^{\ee{m-1}{X_1}}}} \ar[d]_{\Fn{m}{1}}
&&& C\ee{m}{X_1}  \ar[d]^{\Fn{m+1}{1}}\\
\cof(\Fn{m-1}{2}) \ar[rrr]_{\bnn{m}{2} \circ\, r^{\bnn{m-1}{2}}} &&& \cof(\Fn{m}{2})
}
commutes.
\end{lemma}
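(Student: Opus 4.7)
The plan is to convert the equation encoding the nullhomotopy condition \wh an equality of maps out of $\ee{m}{X_1}$ \wh into an equality of maps out of $C\ee{m-1}{X_1}$, and then recognize the latter as exactly the commutativity of \eqref{eqnullcomp}. The bridge is the pushout description $\ee{m}{X_1}=\cof(i^{\ee{m-1}{X_1}})$ together with one of the commuting squares built into the $3\times 3$ diagram that defines $\ann{m}{1}$.

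First I would unwind the definition of nullhomotopy: by construction of the cone, $\Fn{m+1}{1}:C\ee{m}{X_1}\to\cof(\Fn{m}{2})$ is a nullhomotopy of $\bnn{m}{2}\circ\ann{m}{1}$ if and only if $\Fn{m+1}{1}\circ i^{\ee{m}{X_1}}=\bnn{m}{2}\circ\ann{m}{1}$ as maps $\ee{m}{X_1}\to\cof(\Fn{m}{2})$. By Notation \ref{cone defn}, $\ee{m}{X_1}$ is the pushout of $\ast\leftarrow\ee{m-1}{X_1}\xrightarrow{i^{\ee{m-1}{X_1}}}C\ee{m-1}{X_1}$ with structure map $r^{i^{\ee{m-1}{X_1}}}:C\ee{m-1}{X_1}\to\ee{m}{X_1}$. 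By the pushout universal property, two maps out of $\ee{m}{X_1}$ coincide iff they agree after precomposition with $r^{i^{\ee{m-1}{X_1}}}$, since agreement on the other arm $\ast\to\ee{m}{X_1}$ is automatic. Hence the nullhomotopy condition is equivalent to $\Fn{m+1}{1}\circ i^{\ee{m}{X_1}}\circ r^{i^{\ee{m-1}{X_1}}}=\bnn{m}{2}\circ\ann{m}{1}\circ r^{i^{\ee{m-1}{X_1}}}$.

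Second, I would identify the right-hand composite using Definition \ref{ealphabeta}: applied to the triple $(\ann{m-1}{1},\bnn{m-1}{2},\Fn{m}{1})$, the lower-left square of diagram \eqref{eqalphat} yields the identity $\ann{m}{1}\circ r^{i^{\ee{m-1}{X_1}}}=r^{\bnn{m-1}{2}}\circ\Fn{m}{1}$, which is one of the commuting squares produced by the $3\times 3$ construction of $\widetilde{\alpha}$. Substituting this into the equation from the previous step converts the nullhomotopy condition into precisely the commutativity of \eqref{eqnullcomp}, and since every step is reversible both directions of the ``if and only if'' follow at once.

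The only delicate point is the ``iff'' derived from the pushout universal property in the first step: one must check that the two candidate maps out of $\ee{m}{X_1}$ already agree on the other arm $\ast\to\ee{m}{X_1}$, which is immediate since both composites through the base point are trivial. Beyond this, the argument is pure substitution and I foresee no real obstacle; the lemma essentially records the fact that the $3\times 3$ square relating $\ann{m}{1}$ to $\Fn{m}{1}$ (through the cofiber map $r^{\bnn{m-1}{2}}$) lets one translate between nullhomotopy data at consecutive stages of the recursive Toda system.
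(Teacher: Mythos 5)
Your argument matches the paper's proof in essence: both rest on (i) the commuting square $\ann{m}{1}\circ r^{i^{\ee{m-1}{X_1}}}=r^{\bnn{m-1}{2}}\circ\Fn{m}{1}$ coming from Definition \ref{ealphabeta}, and (ii) the fact that $r^{i^{\ee{m-1}{X_1}}}$ is epic, which the paper invokes directly and you derive from the pushout universal property of $\ee{m}{X_1}=\cof(i^{\ee{m-1}{X_1}})$. The substitution chain you describe is the same one the paper writes out, so this is a correct proof by the same route.
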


\begin{proof}
We have the following diagram of horizontal cofibration sequences:
$$
\xymatrix@R=20pt {  \ee{m-1}{X_1} \ar@{^{(}->}[rr]^{i^{\ee{m-1}{X_1}}} \ar@{^{(}->}[d]_{\ann{m-1}{1}}
&& C \ee{m-1}{X_1} \ar@{^{(}->}[d]^{\Fn{m}{1}}  \ar[rr]^{ r^{i^{\ee{m-1}{X_1}}}} &&
\ee{m}{X_1} \ar[d]^{\ann{m}{1}} \\
\cof(\bnn{m-2}{2})  \ar@{^{(}->}[rr]^{\bnn{m-1}{2}}  && \cof(\Fn{m-1}{2})
\ar[rr]^{r^{\bnn{m-1}{2}}} && \cof(\bnn{m-1}{2})
}
$$
The right hand square and that \w{\Fn{m+1}{1}} is a nullhomotopy for \w{\bnn{m}{2} \circ \ann{m}{1}} yields:

$$
\bnn{m}{2} \circ r^{\bnn{m-1}{2}} \circ \Fn{m}{1}~=~\bnn{m}{2} \circ \ann{m}{1} \circ r^{i^{\ee{m-1}{X_1}}}~=~
\Fn{m+1}{1} \circ i^{\ee{m}{X_1}} \circ r^{i^{\ee{m-1}{X_1}}}~
$$
as in \wref[.]{eqnullcomp}

Conversely, if \wref{eqnullcomp} commutes, again using the same right hand square, we get
\w[.]{\bnn{m}{2} \circ \ann{m}{1} \circ r^{i^{\ee{m-1}{X_1}}}= \bnn{m}{2} \circ r^{\bnn{m-1}{2}} \circ \Fn{m}{1}
=\Fn{m+1}{1} \circ i^{\ee{m}{X_1}} \circ r^{i^{\ee{m-1}{X_1}}}}
Since \w{r^{i^{\ee{m-1}{X_1}}}} is epic,
\w[.]{\bnn{m}{2} \circ \ann{m}{1} = \Fn{m+1}{1} \circ i^{\ee{m}{X_1}}}
\end{proof}
\begin{defn}\label{CCnX}
For any \w[,]{X\in\Dd} the $n$-cube \w{\CCn{n}{X}} in $\Dd$ is defined:
$$
\CCn{n}{X}(\vva) :=
\begin{cases}
C\ee{-1}{X}:=X   &   \text{if} ~ \vva=(0,\dots,0) \\
 C\ee{\deg(\vva)-1}{X} & \text{if}~ \deg(\vva)=\init(\vva)  \\
 \ast  & \text{otherwise}
\end{cases}
$$
The only nonzero maps are \w[,]{i^{\ee{m}{X}}\circ\, r^{i^{\ee{m-1}{X}}}:C\ee{m-1}{X}\to C\ee{m}{X}} where
\w{m=\deg(\vva)} (see Definition \ref{dcubicals}).

Thus the $3$-cube \w{\CCn{3}{X}} is
$$
\xymatrix@R=20pt@C=40pt {  X  \ar@{^{(}->}[d]^{i^{X}}  \ar[r]
\ar[rrd] & \ast \ar[d] \ar[rrd] & & \\
CX \ar[r]^{i^{\eex}\circ r^{i^{X}}} \ar[rrd] & C\eex
\ar[rrd]^>>>>>>>>{{i^{\ee{2}{X}}\circ r^{i^{\eex}}}} & \ast \ar[r]
\ar[d]  &  \ast \ar[d] \\
& & \ast \ar[r] & C \ee{2}{X}  }
$$
\end{defn}

\begin{defn}\label{Mn}
Given an $n$-th order recursive Toda system \w[,]{\XFn} we define an $n$-cube
\w{\Mn{n}{2}=\Mn{n}{2}\XFn} in $\Dd$ by:
$$
\Mn{n}{2}(\vva):=\begin{cases}
X_2 & \text{if} \ \vva=(0,\dots,0)\\
X_3 & \text{if}\ \vva=(1,0,\dots,0)\\
\cof(\Fn{k-1}{2}) & \text{if}\ \deg{(\vva)}=\init{(\vva)}=k \geq 2\\
\ast & \text{otherwise}
\end{cases}
$$
Again, the only nonzero maps are \w[,]{\bnn{k}{2} \circ r^{\bnn{k-1}{2}}:\Mn{n}{2}(\vva)\to \Mn{n}{2}(\vvb)}
when \w{\deg(\vva)=\init(\vva)=k} and \w[,]{\deg(\vvb)=\init(\vvb)=k+1}
with \w{\bnn{0}{2}:=f_2} and \w[.]{r^{\bnn{-1}{2}}:=\Id\sb{X_{2}}}

Thus the $3$-cube \w{\Mn{3}{2}\XFn} is the following
$$
\xymatrix@R =20pt@C=30pt {
X_2 \ar[rrd]
\ar[d]_{f_{2}} \ar[r]
&  \ast \ar[rrd] \ar[d]^<<<<{} \\
 X_3 \ar[rrd] \ar[r]^{\bnn{1}{2} \circ r^{f_2}}
& \cof(\Fn{1}{2}) \ar[rrd]^{\bnn{2}{2} \circ r^{\bnn{1}{2}}} & \ast \ar[d] \ar[r] & \ast \ar[d] \\
&&   \ast \ar[r] & \cof(\Fn{2}{2})}
$$
\end{defn}
\begin{defn}\label{AAAn}
The $n$-cube map \w{\AAAn{n}{1}=\AAAn{n}{1}\XF : \CCn{n}{X_1} \to \Mn{n}{2} } is defined:
$$
\AAAn{n}{1}(\vva) =
\begin{cases}
\Fn{k}{1} & \text{if} ~ \deg(\vva)=\init(\vva)=k   \\
\ast & \text{otherwise}
\end{cases}
$$
where \w[.]{\Fn{0}{1}=f_1}
Lemma \ref{lnullcomp} implies that \w{\AAAn{n}{1}\XF} is well defined.

Thus the $2$-cube map \w{\AAAn{2}{1} \XFn:\CCn{2}{X_1}\to\Mn{2}{2}} is the diagonal map in:
\mydiagram[\label{eqtwoone}]{  X_1  \ar@{^{(}->}[d] \ar[r] \ar[rrd]^>>>>>>>>>>{\Fn{0}{1}=f_1} & \ast \ar@{^{(}->}[d]
\ar[rrd]^{} & & \\
CX_1 \ar[r]^{} \ar[rrd]_{\Fn{1}{1}} & C \ee{}{X_1}  \ar[rrd]_<<<<<<<{\Fn{2}{1}} & X_2
\ar[d]^<<<{f_{2}} \ar[r]
&  \ast \ar[d]^<<<<{} \\
& & X_3  \ar[r]_{\bnn{1}{2} \circ r^{f_2}}
& \cof(\Fn{1}{2}) }
\end{defn}
\begin{defn}\label{Fin}
Given \w[,]{X \in \Dd} we define \w{\Fin{n}{X} } to be the $n$-cube with \w{X} in the \wwb{1,\dots,1}slot and $\ast$
elsewhere.  This is functorial in $X$.

Now given an $n$-th order recursive Toda system \w[,]{\XFn} we define the $n$-cube
\w{\xn{n}{n+3}=\xn{n}{n+3}\XFn} in $\Dd$ to be \w[.]{\Fin{n}{\cof(\Fn{n}{2})}}
\end{defn}

\begin{defn}\label{BBBn}
Given an $n$-th order recursive Toda system \w[]{\XFn} in $\Dd$, the $n$-cube map \w{\BBBn{n}{2}=\BBBn{n}{2}\XFn : \Mn{n}{2} \to  \xn{n}{n+3}}  is
defined by \w[.]{\BBBn{n}{2}(1,\dotsc,1) =\bnn{n}{2} \circ r^{\bnn{n-1}{2}}}
\end{defn}
\begin{summary}\label{nstwocubes}
Just as the data of a higher order cubical Toda system is encoded in the sequence of $n$-cube maps \wref[,]{stwocubes} so
for any $n$-th order recursive Toda system \w[,]{\XFn} the data for \w{\Rec\XFn} is encoded by:
\mydiagram[\label{neqtwocubes}]{
\CCn{n}{X_1} \ar[rrr]^{\AAAn{n}{1} \XFn} &&&  \Mn{n}{2}\XFn
 \ar[rrr]^{\BBBn{n}{2}\XFn} &&& \xn{n}{n+3}\XFn
}

Thus for a second-order recursive Toda system \w{\XFn^{(2)}} of length $4$, \w{\Rec\XFn} is encoded by:
$$
\xymatrix@R =25pt@C=30pt {  X_1  \ar@{^{(}->}[d] \ar[r] \ar[rrd]^>>>>>>>>>>{f_1} & \ast \ar[d]
\ar[rrd]^{} & & \\
CX_1 \ar[r]^{} \ar[rrd]_{\Fn{1}{1}} & C \ee{}{X_1}  \ar[rrd]_<<<<<<<{\Fn{2}{1}} & X_2 \ar[rrd]
\ar[d]^<<<{f_{2}} \ar[r]
&  \ast \ar[rrd] \ar[d]^<<<<{} \\
& & X_3 \ar[rrd] \ar[r]^>>>>>>>{\bnn{1}{2} \circ r^{f_2}}
& \cof(\Fn{1}{2}) \ar[rrd]^{\bnn{2}{2} \circ r^{\bnn{1}{2}}} & \ast \ar[d] \ar[r] & \ast \ar[d] \\
&&  & &   \ast \ar[r] & \cof(\Fn{2}{2})}
$$
\end{summary}
\begin{defn}\label{dtosr}
Two $n$-th order recursive Toda systems \w{\XFn} and \w{\XGn} are \emph{equivalent}
(written \w[)]{\XFn \tosr \XGn} if  the sequences
\mydiagram[\label{eqtosra}]{
\CCn{n}{X_1} \ar[rrr]^{\AAAn{n}{1} \XFn} &&&  \Mn{n}{2}\XFn
 \ar[rrr]^{\BBBn{n}{2}\XFn} &&& \xn{n}{n+3}\XFn
}
and
\mydiagram[\label{eqtosrb}]{
\CCn{n}{X'_1} \ar[rrr]^{\AAAn{n}{1} \XGn} &&&  \Mn{n}{2}\XGn
 \ar[rrr]^{\BBBn{n}{2}\XGn} &&& \xn{n}{n+3}\XGn
}
are equivalent under the relation \w{\tos} of \S \ref{rprojmodst}.

We will show later that \ww{\tosr}-equivalent higher order recursive Toda systems
have $\tos$-equivalent recursive Toda brackets.
\end{defn}
%
%
%
%
%

\begin{lemma}\label{paCCnX}
For any \w[,]{X \in \Dd}  \w{\pa{n}{0}\CCn{n}{X}=\CCn{n-1}{X}} and \w{\pa{n}{1}\CCn{n}{X}=\Fin{n-1}{C \ee{n-1}{X}}}
and
\w{\cof^{(n-1)} (\partial^{n} \CCn{n}{X})= i^{\ee{n-1}{X}} : \ee{n-1}{X} \to C\ee{n-1}{X} }
(see Definitions  \ref{Gen cof and hcof}  and \ref{pa}).
\end{lemma}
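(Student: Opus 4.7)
For the first two identities, I would unpack Definition \ref{CCnX} vertex by vertex. Writing $\vva = (\vvb, \varepsilon)$ for an element of $\pa{n}{\varepsilon}\In{n}{}$, note that $\deg(\vva) = \deg(\vvb) + \varepsilon$, while $\init(\vva) = \init(\vvb)$ except when $\vvb = (1,\dots,1)$ and $\varepsilon = 1$ (in which case $\init(\vva) = n$). For $\varepsilon = 0$, the three defining clauses of $\CCn{n}{X}(\vva)$ reduce exactly to those of $\CCn{n-1}{X}(\vvb)$, and the nonzero edges match, yielding $\pa{n}{0}\CCn{n}{X} = \CCn{n-1}{X}$. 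For $\varepsilon = 1$, the condition $\deg = \init$ forces $\vvb = (1,\dots,1)$, so only this vertex carries the nontrivial value $C\ee{n-1}{X}$, with $\ast$ elsewhere --- precisely $\Fin{n-1}{C\ee{n-1}{X}}$.

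For the cofiber claim I would induct on $n$. The $(n-1)$-cube map $\partial^n \CCn{n}{X}: \CCn{n-1}{X} \to \Fin{n-1}{C\ee{n-1}{X}}$ factors through $\ast$ at every vertex $\vvb \in \In{n-1}{}$ except $(1,\dots,1)$, where it is the composite $i^{\ee{n-1}{X}} \circ r^{i^{\ee{n-2}{X}}}: C\ee{n-2}{X} \to C\ee{n-1}{X}$. Since $\Fin{n-1}{C\ee{n-1}{X}}$ has a single non-$\ast$ value at a maximal vertex of $\Ln{n-1}{}$, the defining colimit immediately gives $\cof^{(n-1)}(\Fin{n-1}{C\ee{n-1}{X}}) = C\ee{n-1}{X}$, with colimit structure map at $(1,\dots,1)$ the identity. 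For the source, the iterative cofiber description $\cof^{(n-1)}(\Ac) = \cof(\cof^{(n-2)}(\partial^{n-1}\Ac))$ --- proved by a direct colimit manipulation in the spirit of Lemma \ref{cof=cof} --- combined with the lemma at level $n-1$ gives $\cof^{(n-1)}(\CCn{n-1}{X}) = \cof(i^{\ee{n-2}{X}}) = \ee{n-1}{X}$, with colimit structure map from the vertex $(1,\dots,1)$ being the canonical pushout quotient $r^{i^{\ee{n-2}{X}}}: C\ee{n-2}{X} \to \ee{n-1}{X}$. The base case reduces to $\cof(i^X) = \eex$ with structure map $r^{i^X}$.

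Given these identifications, the induced map $\cof^{(n-1)}(\partial^n \CCn{n}{X}): \ee{n-1}{X} \to C\ee{n-1}{X}$ is pinned down by naturality of colimits: precomposing with the source structure map $r^{i^{\ee{n-2}{X}}}$ must yield the only nontrivial component of the transformation, namely $i^{\ee{n-1}{X}} \circ r^{i^{\ee{n-2}{X}}}$. Since $r^{i^{\ee{n-2}{X}}}$ is an epimorphism (as the quotient onto a pushout with $\ast$ in a pointed category), the induced map coincides with $i^{\ee{n-1}{X}}$, completing the induction. I expect the main obstacle to be formulating and verifying the iterative cofiber formula for cubes --- Lemma \ref{cof=cof} only supplies it for cube maps, though the same decomposition of $\Ln{n-1}{}$ along the $n$-th coordinate as a pushout gives the analogous statement for cubes --- but once that is in hand, the rest of the argument is pure diagram chasing.
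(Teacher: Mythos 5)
The paper states this lemma without proof, treating it as a direct consequence of Definitions \ref{CCnX}, \ref{pa}, \ref{Fin} and \ref{Gen cof and hcof}, so there is no authorial argument to compare against; your write-up simply supplies the omitted verification, and it is correct. The vertex-by-vertex case analysis on $\deg$ and $\init$ under the decomposition $\vva=(\vvb,\varepsilon)$ is exactly what is needed for the first two identities, and you correctly isolate $\vvb=(1,\dots,1)$ with $\varepsilon=1$ as the only place where $\init$ jumps. For the cofiber claim, your three ingredients all check out: (i) $\cof^{(n-1)}(\Fin{n-1}{Y})=Y$ with identity structure map, since the $\Ln{n-1}{}$-diagram $\RRR\Fin{n-1}{Y}$ has $Y$ at the single maximal vertex $\vec{r}_n$ and $\ast$ elsewhere; (ii) $\cof^{(n-1)}(\CCn{n-1}{X})=\ee{n-1}{X}$ with structure map $r^{i^{\ee{n-2}{X}}}$ from the top vertex, which indeed follows from the iterative formula and the inductive hypothesis; and (iii) the induced map on colimits is determined by the naturality square at the top vertex, using that $r^{i^{\ee{n-2}{X}}}$ is epi. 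On that last point, the epimorphism claim holds for an even cheaper reason than you give: in a pointed category every map $X\to\ast$ is epi (since $\ast$ is initial there is a unique map $\ast\to Z$), and $r^f$ is always a cobase change of $X\to\ast$. The one step you rightly flag as needing care, $\cof^{(n-1)}(\Ac)=\cof(\cof^{(n-2)}(\partial^{n-1}\Ac))$ for a cube $\Ac$, does indeed follow by treating the $(n-1)$-cube as an $(n-2)$-cube map and applying the same $\Ln{}{}$-decomposition that underlies Lemma \ref{cof=cof}; and your identification of the structure map $r^{\CCn{n-1}{X}}$ with $r^{i^{\ee{n-2}{X}}}$ is exactly what that iterated-colimit description yields. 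An alternative path to part (ii) would be via Example \ref{egva} ($\CCn{n}{X}=\VV{\mathbb{C}^{(n)}X}$), Lemma \ref{lcfw=cf}, and Example \ref{cof(CX)}, but those appear later in the paper, so your self-contained induction is the cleaner choice here.
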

\begin{lemma}\label{npa}
Given an $n$-th order recursive Toda system \w[]{\XFn} in $\Dd$, then
\begin{enumerate}
\renewcommand{\labelenumi}{(\alph{enumi})~}
\item \w{\pa{n}{0}\AAAn{n}{1}\XFn = \AAAn{n-1}{1}\XFn}
(Definitions \ref{pa} and \ref{AAAn})
\item \w{\partial^n \Mn{n}{2}\XFn = \BBBn{n-1}{2}\XFn}
(Definitions \ref{Mn} and \ref{BBBn})
\item \w{ \pa{n}{1}\AAAn{n}{1}  =
\Fin{n-1}{\Fn{n}{1}}: \Fin{n-1}{C\ee{n-1}{X_1}} \to \Fin{n-1}{\cof(\Fn{n-1}{2})}
}
(Definition \ref{Fin} )
\end{enumerate}
\end{lemma}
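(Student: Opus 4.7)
The plan is to verify each of the three statements by direct inspection, unpacking the case-split definitions of $\AAAn{n}{1}$, $\Mn{n}{2}$, $\BBBn{n}{2}$, $\CCn{n}{X}$, and $\Fin{n-1}{-}$, and comparing them pointwise with the claimed $(n-1)$-dimensional data under the restriction functors $\pa{n}{\varepsilon}$ of Definition \ref{pa}. All three identifications follow the same pattern: we restrict to the subcategory $\pa{n}{\varepsilon}\In{n}{}\cong\In{n-1}{}$ and read off the values from the defining formulas.

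For (a), I would examine $\AAAn{n}{1}(\vva)$ at $\vva=(a_1,\dots,a_{n-1},0)$. By Definition \ref{AAAn}, this is $\Fn{k}{1}$ whenever $\deg(\vva)=\init(\vva)=k$ and $\ast$ otherwise. The constraint $a_n=0$ automatically forces $k\leq n-1$, so the same case split is precisely that defining $\AAAn{n-1}{1}$ on $(a_1,\dots,a_{n-1})$, and the edge maps agree because they arise from the same nullhomotopies $\Fn{k}{1}$.

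For (b), I would first identify source and target of $\partial^n\Mn{n}{2}$. Restriction to $a_n=0$ reproduces the case split of Definition \ref{Mn} verbatim, yielding $\pa{n}{0}\Mn{n}{2}=\Mn{n-1}{2}$. Restriction to $a_n=1$ forces any non-$\ast$ vertex (which requires $\deg(\vva)=\init(\vva)$) to satisfy $\vva=(1,\dots,1)$, at which $\Mn{n}{2}$ takes the value $\cof(\Fn{n-1}{2})$; hence $\pa{n}{1}\Mn{n}{2}=\Fin{n-1}{\cof(\Fn{n-1}{2})}=\xn{n-1}{n+2}$. The only non-trivial edge of $\partial^n\Mn{n}{2}$ is $\Mn{n}{2}(1,\dots,1,0)\to \Mn{n}{2}(1,\dots,1)$, which by Definition \ref{Mn} equals $\bnn{n-1}{2}\circ r^{\bnn{n-2}{2}}$; this matches $\BBBn{n-1}{2}(1,\dots,1)$ by Definition \ref{BBBn}.

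For (c), I would invoke Lemma \ref{paCCnX} to identify $\pa{n}{1}\CCn{n}{X_1}=\Fin{n-1}{C\ee{n-1}{X_1}}$ and reuse the calculation from (b) to obtain the codomain $\Fin{n-1}{\cof(\Fn{n-1}{2})}$. The only non-$\ast$ value of $\AAAn{n}{1}$ on the subcube $a_n=1$ is $\Fn{n}{1}$ at $(1,\dots,1)$, so the restricted map is $\Fin{n-1}{\Fn{n}{1}}$ by functoriality of $\Fin{n-1}{-}$. The argument is entirely bookkeeping; there is no substantive obstacle, but one must take care at the transition vertex $(1,\dots,1,0)\to(1,\dots,1)$ to see that the structural maps $\bnn{k}{2}\circ r^{\bnn{k-1}{2}}$ built into $\Mn{n}{2}$ agree on the nose with the defining value of $\BBBn{n-1}{2}$, and that the well-definedness of $\AAAn{n}{1}$ guaranteed by Lemma \ref{lnullcomp} propagates to the restricted diagrams.
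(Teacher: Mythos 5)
Your verification is correct, and it is exactly the direct unpacking of Definitions \ref{pa}, \ref{CCnX}, \ref{Mn}, \ref{AAAn}, \ref{BBBn}, and \ref{Fin} that the paper implicitly relies on (the lemma is stated without proof). The key observations are precisely the ones you make: setting $a_n=0$ leaves both $\deg$ and $\init$ unchanged, so the case splits in the definitions restrict verbatim; while setting $a_n=1$ is incompatible with $\deg(\vva)=\init(\vva)$ unless $\vva=(1,\dots,1)$, so the restricted cube collapses to a $\Fin{n-1}{-}$ diagram, with the single nontrivial structure map matching $\BBBn{n-1}{2}$ on the nose.
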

\begin{prop}\label{nToda=cofb circ cofa}
For any $n$-th order recursive Toda system \w[]{\XFn} in $\Dd$:
$$
\Tn{n}{1}\XFn= \cof^{(n)}(\BBBn{n}{2})
\circ \cf{n}{\AAAn{n}{1}}
$$
see Definitions \wref[,]{AAAn} \wref{BBBn}  and \wref[.]{Gen defn 2 of 2}
\end{prop}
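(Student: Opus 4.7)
The plan is to proceed by induction on $n$, using Lemma \ref{cof=cof} as the central tool: it reduces $\cof^{(n)}$ of an $n$-cube map to $\cof^{(1)}$ applied to a commutative square whose corners are $\cof^{(n-1)}$'s of the four boundary $(n-1)$-cube slices in the final coordinate. To make the induction carry, one proves the stronger package of identifications
\[
\cof^{(n)}(\CCn{n}{X_1}) = \ee{n}{X_1},\quad \cof^{(n)}(\Mn{n}{2}) = \cof(\bnn{n-1}{2}),\quad \cof^{(n)}(\xn{n}{n+3}) = \cof(\Fn{n}{2}),
\]
together with $\cof^{(n)}(\AAAn{n}{1}) = \ann{n}{1}$ and $\cof^{(n)}(\BBBn{n}{2}) = \bnn{n}{2}$; composing the last two yields the asserted formula.

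The base case $n=1$ is essentially a reading off Definition \ref{ealphabeta}: the $1$-cube map $\AAAn{1}{1}$ is the left square of \wref{eqalphat} on the data $(f_1,f_2,\Fn{1}{1})$, and $\BBBn{1}{2}$ is the right square on $(f_2,f_3,\Fn{1}{2})$, so $\cof^{(1)}$ returns $\ann{1}{1}$ and $\bnn{1}{2}$ respectively. For the inductive step, splitting $\AAAn{n}{1}$ along the $n$-th coordinate, Lemmas \ref{paCCnX} and \ref{npa}(a),(b), the inductive hypothesis applied to the truncated $(n-1)$-th order system, and Lemma \ref{Important2} applied to the finalizing cubes $\Fin{n-1}{(-)}$ at the $\pa{n}{1}$-slices together identify the corner square as
\[
\xymatrix{
\ee{n-1}{X_1} \ar@{^{(}->}[d]_{i^{\ee{n-1}{X_1}}} \ar@{^{(}->}[rr]^{\ann{n-1}{1}} && \cof(\bnn{n-2}{2}) \ar@{^{(}->}[d]^{\bnn{n-1}{2}} \\
C\ee{n-1}{X_1} \ar@{^{(}->}[rr]^{\Fn{n}{1}} && \cof(\Fn{n-1}{2})
}
\]
which is precisely the strongly cofibrant square \wref{eqdefalpha} defining $\widetilde{\alpha}(\ann{n-1}{1},\bnn{n-1}{2},\Fn{n}{1})=\ann{n}{1}$, so $\cof^{(1)}$ of the top horizontal $1$-cube map returns $\ann{n}{1}$ by Definition \ref{ealphabeta}. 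The parallel calculation for $\BBBn{n}{2}$ produces a square whose top map is zero, whose bottom map is $\bnn{n}{2}\circ r^{\bnn{n-1}{2}}$, and whose left vertical is $\bnn{n-1}{2}$; its $\cof^{(1)}$ is the unique factorization of $\bnn{n}{2}\circ r^{\bnn{n-1}{2}}$ through the pushout $\cof(\bnn{n-1}{2})$ (using that $r^{\bnn{n-1}{2}}\circ \bnn{n-1}{2} = \ast$), and this factorization is $\bnn{n}{2}$ itself by the defining property of $\widetilde{\beta}$.

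The main obstacle will be the bookkeeping that turns the raw output of Lemma \ref{cof=cof} into the strongly cofibrant squares of \wref{eqdefalpha}. Specifically, one must verify that the vertical map produced as $\cof^{(n-1)}(\partial^n\Mn{n}{2})$ is literally $\bnn{n-1}{2}$ (via Lemma \ref{npa}(b) combined with the inductive identification $\cof^{(n-1)}(\BBBn{n-1}{2}) = \bnn{n-1}{2}$), and similarly for $\partial^n\CCn{n}{X_1}$ via Lemma \ref{paCCnX}. Strong cofibrancy of $\XFn$ is precisely what ensures these identifications respect the pushout constructions, and propagates the cofibration hypotheses on the $\ann{k}{j}$, $\bnn{k}{j}$ that are needed to invoke $\widetilde{\alpha}$ and $\widetilde{\beta}$ at each stage.
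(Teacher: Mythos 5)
Your proposal is correct and follows essentially the same route as the paper's own proof: strengthen the statement to the pair of identifications $\cof^{(n)}(\AAAn{n}{1})=\ann{n}{1}$ and $\cof^{(n)}(\BBBn{n}{2})=\bnn{n}{2}$, then induct on $n$ using Lemma \ref{cof=cof} to reduce $\cof^{(n)}$ of the $n$-cube map to $\cof^{(1)}$ of a square of $(n-1)$-cube cofibers, identified via Lemmas \ref{paCCnX} and \ref{npa} and the inductive hypothesis as the defining square \wref{eqdefalpha}. The only small inaccuracy is citing Lemma \ref{Important2} for the $\pa{n}{1}$-slices: that lemma yields a weak equivalence for cubes whose off-final vertices are merely weakly trivial, whereas here the off-final vertices of $\Fin{n-1}{(-)}$ are literally $\ast$, so the needed equality $\cof^{(n-1)}(\Fin{n-1}{X})=X$ is a direct colimit computation rather than an application of that lemma.
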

\begin{proof}
We have to show that:
\begin{myeq}\label{eqindstep}
\cof^{(n)}(\AAAn{n}{1}\XFn) = \ann{n}{1}\XFn \ \ \text{and} \ \ \cof^{(n)}(\BBBn{n}{2}\XFn)=\bnn{n}{2}\XFn
\end{myeq}
By Lemma \ref{cof=cof}, applying \w{\cof^{(n)}} to the map $n$-cubes \w{\AAA{n}{1} : \CCn{n}{X_1} \to \Mn{n}{2} }
is equivalent to applying \w{\cof^{(1)}} to the following diagram (i.e., horizontal map)
of vertical $1$-cubes:
\mydiagram[\label{aaabsquare}]{
\cof^{(n-1)}(\pa{n}{0} \CCn{n}{X_1})
\ar[d]_{\cof^{(n-1)}(\partial^{n}\CCn{n}{X_1})}
 \ar[rrr]^{\cof^{(n-1)}(\pa{n}{0}\AAAn{n}{1})}  &&&
 \cof^{(n-1)}(\pa{n}{0}\Mn{n}{2}) \ar[d]^{\cof^{(n-1)}(\partial^{n}\Mn{n}{2})}  \\
\cof^{(n-1)}(\pa{n}{1}\CCn{n}{X_1}) \ar[rrr]^{\cof^{(n-1)}(\pa{n}{1}\AAAn{n}{1})} &&& \cof^{(n-1)}(\pa{n}{1}\Mn{n}{2})
}
By Lemmas \ref{paCCnX} and  \ref{npa}, diagram \wref{aaabsquare} equals:
\mydiagram[\label{aaaabsquare}]{
 \ee{n-1}{X_1}
\ar@{^{(}->}[d]_{i^{\ee{n-1}{X_1}}}
 \ar[rrr]^{\cof^{(n-1)}(\AAAn{n-1}{1})}  &&&
 \cof^{(n-1)}(\Mn{n-1}{2}) \ar[d]^{\cof^{(n-1)}(\BBBn{n-1}{2})}  \\
C\ee{n-1}{X_1} \ar[rrr]^{\Fn{n}{1}} &&&
\cof(\Fn{n-1}{2})
}
so the result follows by induction on \wref[.]{eqindstep} The result for \w{\BBBn{n}{2}} is shown analogously.
\end{proof}
\begin{corollary}\label{Tn=Tn}
Given two higher order recursive Toda systems \w[,]{\XFn \tosr \XGn} we have \w[.]{\Tn{n}{1}\XFn \tos \Tn{n}{1}\XGn}
\end{corollary}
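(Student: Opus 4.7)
The plan is to mirror the argument used for cubical Toda systems in Corollary \ref{ceqimpeqtoda}. By Proposition \ref{nToda=cofb circ cofa}, the recursive Toda bracket factors as
$$\Tn{n}{1}\XFn = \cof^{(n)}(\BBBn{n}{2}\XFn) \circ \cof^{(n)}(\AAAn{n}{1}\XFn),$$
and analogously for $\XGn$. Since $\XFn \tosr \XGn$ means, by Definition \ref{dtosr}, that the encoding sequences \wref{eqtosra} and \wref{eqtosrb} are connected by a zigzag of objectwise weak equivalences in the projective model structure on diagrams indexed on the two concatenated $n$-cubes, it will suffice to show that forming the above composite descends along such a zigzag to a $\tos$-equivalence.

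First I would reduce to the strongly cofibrant case by invoking the obvious analogue of Lemma \ref{lfiltr}: each link in the zigzag can be replaced by a weakly equivalent one whose source and target encoding sequences are strongly cofibrant (so that, in particular, the cubes $\CCn{n}{X_1}$, $\Mn{n}{2}$, $\xn{n}{n+3}$ and the cube maps $\AAAn{n}{1}$, $\BBBn{n}{2}$ are all cofibrant in the appropriate projective model structure). For such sequences, Proposition \ref{gen hcolim=cof} furnishes natural weak equivalences $\zeta_{\Ac}:\hcof^{(n)}(\Ac)\to \cof^{(n)}(\Ac)$, so the strict cofibers appearing in the factorization above coincide, up to canonical weak equivalence, with genuine homotopy cofibers.

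The key step is then to observe that $\hcof^{(n)}$, being a homotopy colimit functor, sends objectwise weak equivalences between strongly cofibrant cube maps to weak equivalences. Applying this to both $\AAAn{n}{1}$ and $\BBBn{n}{2}$ along the replaced zigzag produces a ladder of commutative squares whose horizontal composites are the two Toda brackets; concatenating along the zigzag then yields the required $\tos$-equivalence $\Tn{n}{1}\XFn \tos \Tn{n}{1}\XGn$.

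The main technical hurdle will be carrying out the strongly cofibrant replacement compatibly with the recursively defined edges $\bnn{m}{2}\circ r^{\bnn{m-1}{2}}$ of $\Mn{n}{2}$, since these are themselves outputs of the lower-order stages of the recursive construction and must continue to arise as the appropriate $\widetilde{\beta}$-maps after replacement. Once this compatibility is secured the remainder of the argument is essentially formal and parallels the proof of Corollary \ref{ceqimpeqtoda}.
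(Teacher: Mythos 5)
Your overall strategy differs from the paper's. The paper disposes of this corollary with a single remark immediately after the statement: it simply asserts that a zigzag of weak equivalences between the cubes \w{\CCn{n}{X_1}} and \w{\CCn{n}{X'_1}} induces a zigzag of weak equivalences on their strict cofibers \w{\ee{n}{X_1}} and \w[,]{\ee{n}{X'_1}} citing the very special form of the cube in Definition \ref{CCnX}. You instead propose to pass to strongly cofibrant replacements of the whole encoding sequence, invoke Proposition \ref{gen hcolim=cof} to identify \w{\cof^{(n)}} with \w[,]{\hcof^{(n)}} and use homotopy-invariance of \w[.]{\hcof^{(n)}}

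There is a genuine gap in this route, and it is not the one you flag. The cubes \w[,]{\CCn{n}{X_1}} \w[,]{\Mn{n}{2}\XFn} and \w{\xn{n}{n+3}\XFn} appearing in the encoding sequence \wref{eqtosra} are \emph{not} strongly cofibrant for \w[:]{n\geq 2} they have \w{\ast} in all the off-diagonal slots, so the structure maps into those slots (e.g.\ \w[)]{X_1\to\ast} are not cofibrations, and the edge maps \w{i^{\ee{k}{X}}\circ r^{i^{\ee{k-1}{X}}}} involve cofiber projections which need not be cofibrations either. Hence "replacing each link by a weakly equivalent one whose source and target encoding sequences are strongly cofibrant" does not give you back the sequence \wref[;]{eqtosra} it gives you a \emph{different} sequence, and since \w{\cof^{(n)}} is a strict colimit it is not invariant under weak equivalences of non-cofibrant cubes. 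Concretely, by Proposition \ref{nToda=cofb circ cofa} the Toda bracket is \w{\cof^{(n)}(\BBBn{n}{2})\circ\cf{n}{\AAAn{n}{1}}} computed on the \emph{original} non-cofibrant diagram; your argument produces a zigzag relating the cofibers of the \emph{replacements}, and you never bridge back. The machinery that would do this — the functor \w{W(-)} of Definition \ref{dva}, the map \w[,]{\vartha{\Ac}} Lemma \ref{lcfw=cf} stating \w[,]{\cf{n}{\VV{\Ac}}=\cf{n}{\Ac}} and Example \ref{egva} giving \w{\VV{\mathbb{C}^{(n)}X}=\CCn{n}{X}} — only appears in Section \ref{DdTB}, and is not available at this point; in any case it supplies one specific cofiber-preserving weak equivalence, not a way to handle an arbitrary zigzag.

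Your stated "main technical hurdle" about the recursively defined edges \w{\bnn{m}{2}\circ r^{\bnn{m-1}{2}}} of \w{\Mn{n}{2}} is a red herring. Once the recursive Toda system has been encoded as the diagram \wref[,]{eqtosra} its recursive provenance is irrelevant: the bracket is extracted purely by applying \w{\cof^{(n)}} to the two cube maps, and Definition \ref{dtosr} compares those diagrams as objects of \w[.]{\Dd^{\In{}{}}} No compatibility of the replacement with the \w{\widetilde{\beta}}-construction is required. The real issue, as above, is the weak-invariance of the strict cofiber on the non-cofibrant encoding cubes — which is exactly the fact the paper baldly asserts, appealing to the particular shape of \w{\CCn{n}{X}} (most vertices equal to \w[,]{\ast} the rest weakly contractible cones).
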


Here we use the fact that a zigzag of weak equivalences between \w{\CCn{n}{X_1}} and \w{\CCn{n}{X'_1}} induces a
zigzag of weak equivalences on their (strict) cofibers from \w[] {\ee{n}{X_1}} to \w[]{\ee{n}{X'_1}} (see Definition \ref{CCnX}).

%
%
\supsect{\protect{\ref{DdTB}}.B}{Passage from the cubical to the recursive definition}

We now prove the first direction in showing that cubical and recursive definitions of the Toda brackets agree,
using the diagrammatic description to show how a cubical Toda bracket is $\tos$-equivalent to the recursive version.

\begin{defn}\label{dva}
Given an $n$-cube \w{\Ac} in $\Dd$, we define an $n$-cube \w{\VV{\Ac}} with a natural transformation \w{\vartha{\Ac}:\Ac\to\VV{\Ac}}
by induction on \w{n\geq 1} as follows:

For \w[,]{n=1} \w{\vartha{\Ac}} is \w[.]{\Id:\Ac\to\Ac}
For \w{n\geq 2} we set \w{\pa{n}{0}\VV{\Ac}:=\VV{\pa{n}{0}\Ac}} and define the \wwb{n-1}cube \w{\pa{n}{1}\VV{\Ac}:=\Fin{n-1}{\cf{n-1}{\pa{n}{1}\Ac}}}
(see Definitions \ref{pa} and \ref{Fin}),
where the map \w{\VV{\Ac}(1,\dots,1,0) \to \VV{\Ac}(1,\dots,1,1)} is
\w{ \cf{n-1}{\partial^{n}\Ac} \circ r^{\cf{n-2}{\partial^{n-1}\pa{n}{0}\Ac}}  }
and \w{\vartha{\Ac}} in the final vertex is the structure map \w{r\sp{\pa{n}{1}\Ac}} (see Definition \ref{Gen cof and hcof} and
Remark \ref{cofzero}).
\end{defn}
\begin{example}
If \w{\Ac} is the back square in the following, then \w{\vartha{\Ac}:\Ac\to\VV{\Ac}} is:
$$
\xymatrix@R=20pt {  X \ar[d]^{f} \ar[r]^{h} \ar@{=}[rrd] & Z \ar[d]_{g}
\ar[rrd] & & \\
Y \ar[r]^{k} \ar@{=}[rrd] & V \ar[rrd]^>>>>>>>{r^g}  & X \ar[d]_>>>{f} \ar[r]
 &  \ast \ar[d] \\
& & Y \ar[r]_{r^g \circ k} &  \cof(g) }
$$
(Observe that \w[).]{r^g \circ k =(\cof(f) \to \cof(g))\circ r^f }
\end{example}
\begin{example}\label{egva}
For any \w[,]{X\in\Dd} we have \w{\VV{\mathbb{C}^{(n)}X}= \CCn{n}{X}} (See definitions \ref{CC^mX} and \ref{CCnX}).
\end{example}
\begin{lemma}\label{lcfw=cf}
Given an $n$-cube \w[,]{\Ac} then \w[.]{\cf{n}{\vartha{\Ac}}= \Id_{\cf{n}{\Ac}}} In particular
\w[.]{\cf{n}{\VV{\Ac}}=\cf{n}{\Ac}}
\end{lemma}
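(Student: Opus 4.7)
The plan is to proceed by induction on $n$, exploiting the recursive construction of $\VV{\Ac}$ in Definition \ref{dva}. The base case $n=1$ is immediate: by definition $\vartha{\Ac}=\Id_\Ac$, so $\cof^{(1)}(\vartha{\Ac})=\Id$ trivially.

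For the inductive step, I would apply Lemma \ref{cof=cof} with $j=n$ to express $\cof^{(n)}(\vartha{\Ac})$ as $\cof^{(1)}$ of the $1$-cube map produced by applying $\cof^{(n-1)}$ to the four corners of $\partial^{n}\vartha{\Ac}$. On the $0$-face, Definition \ref{dva} gives $\pa{n}{0}\vartha{\Ac} = \vartha{\pa{n}{0}\Ac}$, so the inductive hypothesis yields $\cof^{(n-1)}(\pa{n}{0}\vartha{\Ac}) = \Id$ immediately.

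The crux is the $1$-face. Setting $Y:=\cof^{(n-1)}(\pa{n}{1}\Ac)$, Definition \ref{dva} identifies $\pa{n}{1}\VV{\Ac}$ with $\Fin{n-1}{Y}$, and $\pa{n}{1}\vartha{\Ac}$ is the $(n-1)$-cube map whose component at $(1,\dots,1)$ is the structure map $r^{\pa{n}{1}\Ac}$ and which factors through $\ast$ at every other vertex. I would first verify directly from Definition \ref{Gen cof and hcof} that $\cof^{(n-1)}(\Fin{n-1}{Y}) = Y$, with the structure map $\Id_Y$ at the single non-$\ast$ vertex of $\mathcal{R}(\Fin{n-1}{Y})$. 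Then, using functoriality of $\cof^{(n-1)}$ together with the compatibility constraint that in $\mathcal{R}(\pa{n}{1}\Ac)$ the structure maps $r^{\pa{n}{1}\Ac}_{\vva}$ for $\vva \neq (1,\dots,1)$ are forced to factor through the basepoint (because the adjacent $\ast$-vertices in $\mathcal{R}(\pa{n}{1}\Ac)$ provide such factorizations through the colimit cocone), I can verify that $\cof^{(n-1)}(\pa{n}{1}\vartha{\Ac})$ and $\Id_Y$ agree upon pre-composing with every structure map into $Y$; universal property uniqueness then forces $\cof^{(n-1)}(\pa{n}{1}\vartha{\Ac}) = \Id_Y$.

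Putting the pieces together via Lemma \ref{cof=cof}, the square produced has both horizontal arrows equal to the identity, while the vertical arrows agree by naturality of $\vartha{\cdot}$; hence its $\cof^{(1)}$ is literally $\Id_{\cof^{(n)}(\Ac)}$. This simultaneously yields the ``in particular'' part $\cof^{(n)}(\VV{\Ac}) = \cof^{(n)}(\Ac)$. I expect the main obstacle to be the bookkeeping needed to ensure that all the identifications made in the argument are genuine equalities of colimit models rather than merely canonical isomorphisms, since the statement asserts a literal equality of maps and not just a weak equivalence.
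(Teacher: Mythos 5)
The paper offers no proof of Lemma \ref{lcfw=cf}, but your argument is correct and is exactly the expected one: induction on $n$, peeling off the last coordinate via Lemma \ref{cof=cof} and exploiting the recursive definition of $\VV{\Ac}$ and $\vartha{\Ac}$.

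Two small points worth tightening. First, your claim that the vertical maps ``agree by naturality of $\vartha{\cdot}$'' is really a consequence of the fact that $\vartha{\Ac}$ is an $n$-cube map (so the square produced by $\partial^{n}$ commutes) together with the already-established fact that both horizontal maps are the identity; you should state it in that order, since the equality of the verticals is a conclusion from the commutativity and the identity horizontals, not an independent input. Second, for the $1$-face computation, the heart of the matter is precisely the observation you make about the structure maps: in the $\Ln{n-1}{}$-diagram $\RRR(\pa{n}{1}\Ac)$, every cocone component at a vertex other than $(1,\dots,1,0)$ factors through an adjacent $\ast$-vertex (either because it already lies on the $\{a_n=1\}$ face, or because it admits an outgoing arrow into that face), so it is the zero map; this is what lets you check agreement with $\Id_Y$ on the lone nontrivial cocone leg $r^{\pa{n}{1}\Ac}$ and conclude by the universal property. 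Your acknowledgment that one is tacitly choosing compatible strict colimit models (so that, e.g., $\cof^{(1)}(\ast\to Y)=Y$ on the nose and Lemma \ref{cof=cof} holds as an equality) is the same convention the paper itself uses throughout and is the appropriate caveat.
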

\begin{lemma}\label{lnatwe}
If \w{\Ac} is a strongly cofibrant $n$-cube in $\Dd$ and \w{\Ac(\vva)\simeq\ast} whenever \w{\deg(\vva)\neq\init(\vva)} (Definition \ref{dcubicals}),
then \w{\vartha{\Ac}} is a weak equivalence.
\end{lemma}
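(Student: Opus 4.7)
The plan is to prove this by induction on $n$, exploiting the fact that weak equivalences in the projective model structure on $\Dd^{\In{n}{}}$ are detected object-wise. The base case $n=1$ is immediate, since by Definition \ref{dva} we have $\vartha{\Ac} = \Id_{\Ac}$.

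For the inductive step, I would verify that $\vartha{\Ac}(\vva)$ is a weak equivalence for each $\vva \in \In{n}{}$, splitting into cases according to the last coordinate of $\vva$. If $\vva = (\vvb, 0)$ with $\vvb \in \In{n-1}{}$, then by construction $\vartha{\Ac}(\vva) = \vartha{\pa{n}{0}\Ac}(\vvb)$. Under the identification $\pa{n}{0}\In{n}{} \cong \In{n-1}{}$ both $\deg$ and $\init$ are preserved (a zero in the final slot changes neither), so $\pa{n}{0}\Ac(\vvb) \simeq \ast$ whenever $\deg(\vvb) \neq \init(\vvb)$; since $\pa{n}{0}\Ac$ is also strongly cofibrant by restriction, the inductive hypothesis delivers the required weak equivalence at this vertex.

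For $\vva = (\vvb, 1)$ with $\vvb \neq (1,\dots,1)$, a direct computation gives $\deg(\vva) = \deg(\vvb)+1 > \init(\vvb) = \init(\vva)$, so the main hypothesis forces $\Ac(\vva) \simeq \ast$; on the other hand $\VV{\Ac}(\vva) = \Fin{n-1}{\cf{n-1}{\pa{n}{1}\Ac}}(\vvb) = \ast$ by construction, so $\vartha{\Ac}(\vva)$ is trivially a weak equivalence. The critical remaining vertex is $\vva = (1,\dots,1)$, where $\vartha{\Ac}(\vva)$ equals the structure map $r^{\pa{n}{1}\Ac}: \pa{n}{1}\Ac(1,\dots,1) \to \cf{n-1}{\pa{n}{1}\Ac}$. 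The previous case guarantees $\pa{n}{1}\Ac(\vvb) \simeq \ast$ for every $\vvb \neq (1,\dots,1)$, and $\pa{n}{1}\Ac$ is strongly cofibrant, so Lemma \ref{Important2} applies directly to yield the weak equivalence.

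The main, though routine, obstacle is checking that strong cofibrancy descends to the restrictions $\pa{n}{0}\Ac$ and $\pa{n}{1}\Ac$; this follows from the compatibility of the degree filtration on $\In{n}{}$ with the inclusions $\pa{n}{\varepsilon}\In{n}{} \hookrightarrow \In{n}{}$ for $\varepsilon \in \{0,1\}$, and it is already implicit in the proofs of Lemma \ref{cof=cof} and Proposition \ref{gen hcolim=cof}.
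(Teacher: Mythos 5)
The paper states Lemma \ref{lnatwe} without giving a proof, so there is nothing to compare against; your argument is the natural one and it is correct. The inductive structure tracks Definition \ref{dva} exactly: on the $\pa{n}{0}$-face the map $\vartha{\Ac}$ restricts to $\vartha{\pa{n}{0}\Ac}$ and the hypothesis persists because appending a final $0$ changes neither $\deg$ nor $\init$, so the induction hypothesis applies vertex-wise; on the $\pa{n}{1}$-face away from the terminal vertex you correctly observe that $\deg(\vvb,1)=\deg(\vvb)+1>\init(\vvb)=\init(\vvb,1)$, so $\Ac(\vvb,1)\simeq\ast$ maps to $\ast$; and at $(1,\dots,1)$ the map is by construction $r^{\pa{n}{1}\Ac}$, where Lemma \ref{Important2} applies because the previous case shows every non-terminal entry of $\pa{n}{1}\Ac$ is weakly trivial.

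The one step you mark as routine --- that strong cofibrancy passes to $\pa{n}{\varepsilon}\Ac$ --- deserves a word of caution. For $\pa{n}{0}\Ac$ it really is immediate: the predecessors of $(\vvb,0)$ in $\In{n}{}$ all lie in $\pa{n}{0}\In{n}{}$, so the latching objects and latching maps of $\pa{n}{0}\Ac$ coincide with those of $\Ac$. For $\pa{n}{1}\Ac$ the latching object of the restricted cube at $\vvb$ is a proper subobject of that of $\Ac$ at $(\vvb,1)$, and one has to compose the cobase change of a cofibration with the latching cofibration of $\Ac$, exactly as the paper does for the square in Example \ref{cofsqu} (where it is observed that the two "far" edges are cofibrations). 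Since the paper already invokes strong cofibrancy of both faces without comment in the proof of Proposition \ref{gen hcolim=cof}, relying on it here matches the paper's own level of detail, but if you wanted a self-contained writeup you would want to spell out the $\pa{n}{1}$ case rather than subsume it under "compatibility of the degree filtration," which only literally covers $\pa{n}{0}$.
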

%
%
\begin{lemma}\label{Important}
Given a map of $n$-cubes \w[,]{\mathbb{F} :\Ac \to \Bc} we have a commutative diagram:
$$
\xymatrix@R=15pt @C=55pt {
\Ac \ar[rr]^{\mathbb{F}} \ar[d]_>>>>{\vartha{\Ac}} && \Bc \ar[d]^>>>{(r^{\Bc})_{\ast}} \\
W(\Ac) \ar[rr]_<<<<<<<<<<<<<<<<<<{(\cf{n}{\mathbb{F}} \circ r^{\cf{n-1}{\partial^{n}\Ac}} )_{\ast}} && \Fin{n}{\cf{n}{\Bc}}
}
$$
(in the notation of Lemma \ref{Important2}).
\end{lemma}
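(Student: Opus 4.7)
The plan is to verify commutativity of the displayed square of $n$-cube maps vertex by vertex in $\In{n}{}$, exploiting the fact that the codomain cube $\Fin{n}{\cf{n}{\Bc}}$ is concentrated at the single terminal vertex $(1,\dotsc,1)$. For any $\vva\neq(1,\dotsc,1)$ the target value is $\ast$, so both composites at $\vva$ are the unique map to the basepoint and commutativity is automatic. Only the vertex $(1,\dotsc,1)$ requires work. (The fact that the bottom map is a legitimate cube map in the first place follows from the standard identity $r\sp{f}\circ f = \ast$ in a pointed category, which ensures that the edges into $(1,\dotsc,1)$ in $\VV{\Ac}$ are killed when pushed into $\cf{n}{\Bc}$.)

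At $(1,\dotsc,1)$, unwinding Definition \ref{dva} identifies $\vartha{\Ac}(1,\dotsc,1)$ with the structure map $r\sp{\pa{n}{1}\Ac}\colon \Ac(1,\dotsc,1)\to \cf{n-1}{\pa{n}{1}\Ac}$, while Lemma \ref{cof=cof} applied to $\Bc$ factors the total structure map as
\[
r\sp{\Bc} \;=\; r\sp{\cf{n-1}{\partial\sp{n}\Bc}}\circ r\sp{\pa{n}{1}\Bc}.
\]
So the identity to verify reduces to
\[
r\sp{\cf{n-1}{\partial\sp{n}\Bc}}\circ r\sp{\pa{n}{1}\Bc}\circ \mathbb{F}(1,\dotsc,1) \;=\; \cf{n}{\mathbb{F}}\circ r\sp{\cf{n-1}{\partial\sp{n}\Ac}}\circ r\sp{\pa{n}{1}\Ac}.
\]

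I would then apply naturality of the cofiber structure map $r$ twice. First, applied to the $(n-1)$-cube map $\pa{n}{1}\mathbb{F}\colon \pa{n}{1}\Ac\to \pa{n}{1}\Bc$, naturality gives
\[
r\sp{\pa{n}{1}\Bc}\circ \mathbb{F}(1,\dotsc,1) \;=\; \cf{n-1}{\pa{n}{1}\mathbb{F}}\circ r\sp{\pa{n}{1}\Ac}.
\]
Second, applied to the ordinary $1$-cube map $\cf{n-1}{\partial\sp{n}\mathbb{F}}$ supplied by Lemma \ref{cof=cof}, naturality gives
\[
r\sp{\cf{n-1}{\partial\sp{n}\Bc}}\circ \cf{n-1}{\pa{n}{1}\mathbb{F}} \;=\; \cf{n}{\mathbb{F}}\circ r\sp{\cf{n-1}{\partial\sp{n}\Ac}},
\]
where the right-hand identification of $\cf{1}{\cf{n-1}{\partial\sp{n}\mathbb{F}}}$ with $\cf{n}{\mathbb{F}}$ is again Lemma \ref{cof=cof}. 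Splicing these two identities yields exactly the required equality.

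The main obstacle is purely notational: the geometric content is just two applications of naturality combined with Lemma \ref{cof=cof}, but the nested structure maps of iterated cofibers make the formulas dense. A cleaner packaging is induction on $n$: the base case $n=1$ is literal naturality of $\cof$ for a $1$-cube map (since $\VV{\Ac}=\Ac$ and $\vartha{\Ac}=\Id$), and the inductive step splits the $n$-cube square into its $\pa{n}{0}$-face, handled by the hypothesis via $\pa{n}{0}\VV{\Ac}=\VV{\pa{n}{0}\Ac}$, and its $\pa{n}{1}$-face, which reduces once more to the single-vertex computation above.
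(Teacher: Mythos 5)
Your main (vertex-by-vertex) argument is correct, but it takes a genuinely different route from the paper's. The paper's proof is a one-line observation: viewing the $n$-cube map $\mathbb{F}\colon\Ac\to\Bc$ as an $(n{+}1)$-cube $\mathbb{G}$ (so that $\pa{n+1}{0}\mathbb{G}=\Ac$, $\pa{n+1}{1}\mathbb{G}=\Bc$, $\partial\sp{n+1}\mathbb{G}=\mathbb{F}$), the displayed square is \emph{literally} the decomposition of the natural transformation $\vartha{\mathbb{G}}\colon\mathbb{G}\to\VV{\mathbb{G}}$ along the last coordinate: the bottom arrow is $\partial\sp{n+1}\VV{\mathbb{G}}=\VV{\mathbb{F}}$, the left vertical is $\pa{n+1}{0}\vartha{\mathbb{G}}=\vartha{\Ac}$, and the right vertical is $\pa{n+1}{1}\vartha{\mathbb{G}}=(r\sp{\Bc})\sb{\ast}$. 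Commutativity is then automatic because $\vartha{\mathbb{G}}$ is by Definition \ref{dva} a map of $(n{+}1)$-cubes. What you do instead is unfold that definition and re-verify the one nontrivial component by hand: you reduce to the terminal vertex and check the equality via the factorization $r\sp{\Bc}=r\sp{\cf{n-1}{\partial\sp{n}\Bc}}\circ r\sp{\pa{n}{1}\Bc}$ and two applications of naturality of $r$. That is essentially reproducing, at one vertex, the content that Definition \ref{dva} packages once and for all; it buys you an explicit, elementary calculation at the cost of hiding the structural point that the square \emph{is} $\vartha{\mathbb{F}}$. The paper's framing buys brevity and makes the functoriality of $W$ and $\vartha{}$ transparent.

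One caveat on your alternative ``cleaner packaging'' by induction on $n$: as stated, the inductive step does not quite close. The $\pa{n}{0}$-face of the square has bottom-right corner $\pa{n}{0}\Fin{n}{\cf{n}{\Bc}}$, which is the constant $\ast$ cube (the only nonzero vertex of $\Fin{n}{\cf{n}{\Bc}}$ lies in the $\pa{n}{1}$-face), so that face is not an instance of the inductive hypothesis for $\pa{n}{0}\mathbb{F}\colon\pa{n}{0}\Ac\to\pa{n}{0}\Bc$ (whose square has bottom-right corner $\Fin{n-1}{\cf{n-1}{\pa{n}{0}\Bc}}$, generally nontrivial). It commutes trivially, but not ``by induction.'' Your primary vertex-wise argument is the one to keep; I would either drop the inductive repackaging or spell out that the $\pa{n}{0}$-face commutes for the trivial reason that its target is constant, not by the inductive hypothesis.
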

\begin{proof}
If we think of the $n$-cube map \w{\mathbb{F}} as an $(n+1)$-cube, then the bottom map is \w[,]{\VV{\mathbb{F}}}
and the vertical map is \w[.]{\vartha{\mathbb{F}}}
\end{proof}

\begin{lemma}\label{paM}
Assume given a higher order cubical Toda system \w{\XF} in $\Dd$.
\begin{enumerate}
\renewcommand{\labelenumi}{(\alph{enumi})~}
\item \w{\partial\sp{1} \Mm{m}{j}\XF = \AAA{m-1}{j}\XF   } (Definitions \ref{pa}, \ref{AAA} and \ref{Mm})
\item \w{\partial\sp{m}\Mm{m}{j}\XF =  \BBB{m-1}{j}\XF } (Definition \ref{BBB}).
\item \w{\pa{m}{0}\AAA{m}{j}\XF = \AAA{m-1}{j} \XF }
and \w[.]{\pa{m}{1}\AAA{m}{j}\XF = \FFF{m}{j}\XF }
\item \w{\pa{1}{0} \BBB{m}{j}\XF =  \mathfrak{i}^{\mathbb{C}^{m-1}X_j}  }
and   \w[.]{\pa{1}{1} \BBB{m}{j}\XF =  \BBB{m-1}{j+1}\XF  }
\item \w{\partial\sp{1}\xm{m}{j+m+1}\XF = \FFF{m}{j}\XF} (Definitions \ref{xm} and \ref{diagnull}).
\end{enumerate}
\end{lemma}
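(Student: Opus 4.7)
The plan is to verify each of the five equalities (a)–(e) by direct computation from the definitions of $\Mm{m}{j}$, $\AAA{m}{j}$, $\BBB{m}{j}$, $\xm{m}{j+m+1}$, and $\FFF{m}{j}$, evaluating both sides on an arbitrary vertex $\vva'$ of the $(m-1)$-cube. In each case, the key bookkeeping is the behavior of $\deg$ and $\init$ under prepending or appending a coordinate, together with the case split based on whether $\vva'=(1,\dots,1)$ (the only vertex where a "non-trivial" map appears).

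First I would handle parts (a) and (b) simultaneously, since both compute the restrictions of $\Mm{m}{j}$ along the first and last coordinate directions. For (a), observe that $\init(0,\vva')=0$ and $\deg(0,\vva')=\deg(\vva')$, so $\pa{1}{0}\Mm{m}{j}(\vva')=C^{\deg(\vva')}X_j=\mathbb{C}^{(m-1)}X_j(\vva')$; while for $(1,\vva')$ one computes $\init(1,\vva')=1+\init(\vva')$, yielding $\pa{1}{1}\Mm{m}{j}=\Mm{m-1}{j+1}$. The component of $\partial^1\Mm{m}{j}$ at $\vva'$ is the edge of $\Mm{m}{j}$ in direction $s=1$; since $s=\init(0,\vva')+1$, the first case of the edge formula in Definition \ref{Mm} applies with $\ell=\init(\vva')$, producing $C^{\deg(\vva')-\init(\vva')}\Fm{\init(\vva')}{j}=\AAA{m-1}{j}(\vva')$. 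For (b), the restriction to $a_m=0$ gives $\Mm{m-1}{j}$ (since appending a $0$ changes neither $\deg$ nor $\init$ when some coordinate of $\vva'$ is already $0$, and when $\vva'=(1,\dots,1)$ we get $\init=m-1$, $\deg=m-1$, so $\Mm{m}{j}(1,\dots,1,0)=X_j$ wait, must handle this carefully); the restriction to $a_m=1$ is $\xm{m-1}{j+m}$, and the component of $\partial^m\Mm{m}{j}$ at $\vva'$ is governed by the second case of the edge formula (since $s=m\neq\init+1$ except when $\vva'=(1,\dots,1)$), giving the cone inclusion $i^{\Mm{m-1}{j}(\vva')}$ or $f_{j+m-1}$, matching $\BBB{m-1}{j}$.

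Next, parts (c) and (d) are immediate from Definitions \ref{AAA} and \ref{BBB}. For (c), since $\AAA{m}{j}(\vva)=C^{\deg(\vva)-\init(\vva)}\Fm{\init(\vva)}{j}$ depends only on $\deg$ and $\init$, appending a $0$ yields $\AAA{m-1}{j}(\vva')$ directly, while appending a $1$ gives $C^{\deg(\vva')+1-\init(\vva')}\Fm{\init(\vva')}{j}$ if $\vva'\neq(1,\dots,1)$ and $\Fm{m}{j}$ otherwise; both cases reproduce $\FFF{m}{j}(\vva')$ by Definition \ref{diagnull}. For (d), the values $\BBB{m}{j}(0,\vva')=i^{\Mm{m}{j}(0,\vva')}=i^{\mathbb{C}^{(m-1)}X_j(\vva')}$ exhibit $\pa{1}{0}\BBB{m}{j}$ as the $(m-1)$-cube of cone inclusions $\mathfrak{i}^{\mathbb{C}^{(m-1)}X_j}$, while $\BBB{m}{j}(1,\vva')$ produces either $f_{j+m}$ (when $\vva'=(1,\dots,1)$) or $i^{\Mm{m-1}{j+1}(\vva')}$, which is exactly $\BBB{m-1}{j+1}(\vva')$.

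Finally, (e) requires the most care because of the index bookkeeping between $\xm{m}{j+m+1}$ and its interior cube $\Mm{m}{j}$. The restriction $\pa{1}{0}\xm{m}{j+m+1}$ is $C\mathbb{C}^{(m-1)}X_j$ (since $(0,\vva')\neq(1,\dots,1)$, the value is $C\Mm{m}{j}(0,\vva')=C^{\deg(\vva')+1}X_j$), and $\pa{1}{1}\xm{m}{j+m+1}=\xm{m-1}{j+m+1}$ by the same case analysis used in (b). The component of $\partial^1\xm{m}{j+m+1}$ at $\vva'$ is the edge of $\xm{m}{j+m+1}$ in the first coordinate direction: when $(0,\vva')=\vec r_1$, i.e., $\vva'=(1,\dots,1)$, this edge is $\Fm{m}{j}$ by the first case of the edge formula in Definition \ref{xm} (reindexed for $N=j+m+1$), and otherwise it is $C$ applied to the edge of $\Mm{m}{j}$ from $(0,\vva')$ to $(1,\vva')$, which by the first case of Definition \ref{Mm} (since $s=1=\init(0,\vva')+1$) is $C^{\deg(\vva')-\init(\vva')}\Fm{\init(\vva')}{j}$, yielding $C^{\deg(\vva')-\init(\vva')+1}\Fm{\init(\vva')}{j}$ — precisely $\FFF{m}{j}(\vva')$.

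The main obstacle is the index reindexing in (e) (the map $\xm{m}{j+m+1}$ plays the role of $\xm{m}{(j-1)+m+2}$ in the original formula of Definition \ref{xm}, so the edge $\Fm{m-k+1}{j+k}$ becomes $\Fm{m-k+1}{j+k-1}$, and one must verify that the direction-$1$ edge produces $\Fm{m}{j}$ rather than some other $\Fm{\ell}{\cdot}$). Once this is clarified, all five parts reduce to unambiguous case checks.
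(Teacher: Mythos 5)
The paper states Lemma~\ref{paM} without proof, leaving the reader to verify it by unwinding Definitions~\ref{Mm}--\ref{diagnull}, which is exactly what you do: restrict to a face, split on whether $\vva'=(1,\dots,1)$, and track how $\deg$ and $\init$ change when a coordinate is prepended or appended. All five parts are verified correctly and the indexing (including the reindexing of $\xm{m}{j+m+1}$ in part~(e)) is handled properly. The only blemish is the flagged slip in part~(b): $\Mm{m}{j}(1,\dots,1,0)=C^{(m-1)-(m-1)}X_{j+(m-1)}=X_{j+m-1}$, not $X_j$; this agrees with $\Mm{m-1}{j}(1,\dots,1)$ and with the edge value $f_{j+m-1}$ you obtain (here $\init(1,\dots,1,0)=m-1$, so $s=m=\init+1$ and the \emph{first} case of the edge formula with $\ell=0$ gives $F^{(0)}_{j+m-1}$, rather than the second case as you indicate). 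Fixing that parenthetical remark makes the proof clean.
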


\begin{defn}\label{indchcx}
Given an $n$-th order cubical Toda system\w{\XF} in $\Dd$ of length \w[,]{n+2}  for each \w{1 \leq m \leq n}
 and \w[,]{1 \leq j \leq n-m+2 } we write \w{\Fn{m}{j}:=\cof^{(m-1)}{(\FFF{m}{j}\XF ) }} (see \S \ref{diagnull}), and denote this system by \w[.]{V\XF:=\XFn}
\end{defn}

\begin{prop}\label{prop1}
Given a strongly cofibrant $n$-th order cubical Toda system\w{\XF} in $\Dd$ of length \w[,]{n+2} \w{V\XF} constitutes an $n$-th order recursive 
Toda system  (see Definition \ref{dredchcx}), satisfying
$$
\cof^{(m)}(\AAA{m}{j}\XF) = \ann{m}{j}(V\XF)\hspace{5mm}\text{and} \ \ \cof^{(m)}(\BBB{m}{j}\XF)=\bnn{m}{j}(V\XF)
$$
as in Definition \ref{Gen defn 2 of 2}.
\end{prop}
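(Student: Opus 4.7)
The plan is to proceed by induction on $m$, simultaneously establishing that the data $\widetilde{F}\sb{\ast}$ from Definition \ref{indchcx} fits into the recursive framework of Definition \ref{dredchcx} up to level $m$, and that $\cof^{(m)}(\AAA{m}{j}\XF) = \ann{m}{j}(V\XF)$ and $\cof^{(m)}(\BBB{m}{j}\XF) = \bnn{m}{j}(V\XF)$ for all relevant $j$. The base case $m = 1$ is a direct unpacking: $\Fn{1}{j} = \cof^{(0)}(\FFF{1}{j}) = F_j$ is a nullhomotopy for $f_{j+1}\circ f_j$ in a strongly cofibrant square (both inherited from $\XF$), and the $1$-cube map $\AAA{1}{j}\XF$ is exactly the commutative square of Definition \ref{ealphabeta}, so $\cof^{(1)}$ yields $\widetilde\alpha(f_j,f_{j+1},F_j) = \ann{1}{j}(V\XF)$, with the analogous identity for $\BBB{1}{j}\XF$.

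For the inductive step at level $m+1$ I would first verify recursive validity. The diagrammatic nullhomotopy relation \wref{eqdiagnull} reads $\FFF{m+1}{j}\circ i^{\mathbb{C}^{(m)}X_j} = \BBB{m}{j+1}\circ \AAA{m}{j}$; applying $\cof^{(m)}$ and using $\cof^{(m)}(\mathbb{C}^{(m)}X_j) = \ee{m}{X_j}$ (Example \ref{cof(CX)}) together with $\cof^{(m)}(C\mathbb{C}^{(m)}X_j) = C\ee{m}{X_j}$ and $\cof^{(m)}(i^{\mathbb{C}^{(m)}X_j}) = i^{\ee{m}{X_j}}$ (since $C$ commutes with the relevant colimits), along with the inductive hypothesis, produces exactly the relation of Lemma \ref{lnullcomp} certifying $\Fn{m+1}{j}$ as a nullhomotopy for $\bnn{m}{j+1}\circ \ann{m}{j}$; strong cofibrancy of the resulting square is inherited from that of $\XF$ via iteration of Lemma \ref{cofibrant square induces cofibration}. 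For the cofiber identifications, I would apply Lemma \ref{cof=cof} along the first coordinate of $\AAA{m+1}{j}\XF$, reducing $\cof^{(m+1)}$ to $\cof^{(1)}$ of the induced horizontal map in the square obtained by taking $\cof^{(m)}$ of $\partial^1\AAA{m+1}{j}\XF$. By Lemma \ref{paM}(c), the two legs of this square are $\AAA{m}{j}\XF$ and $\FFF{m+1}{j}\XF$, so the inductive hypothesis combined with the validity step identifies the resulting square as the strongly cofibrant square \wref{eqdefalpha} with $k=m$ that defines $\ann{m+1}{j}(V\XF)$ via $\widetilde\alpha$; thus Definition \ref{ealphabeta} yields $\cof^{(m+1)}(\AAA{m+1}{j}\XF) = \widetilde\alpha(\ann{m}{j},\bnn{m}{j+1},\Fn{m+1}{j}) = \ann{m+1}{j}(V\XF)$. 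The computation for $\BBB{m+1}{j}\XF$ is parallel, using Lemma \ref{paM}(d) and the $\widetilde\beta$-construction.

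The main obstacle will be the auxiliary bookkeeping of cofibers of intermediate cubes, notably the identifications $\cof^{(m)}(\Mm{m}{j+1}) = \cof(\bnn{m-1}{j+1})$ and $\cof^{(m)}(\xm{m}{j+m+2}) = \cof(\Fn{m}{j+1})$, which are required to align source and target of $\cof^{(m+1)}(\AAA{m+1}{j})$ and $\cof^{(m+1)}(\BBB{m+1}{j})$ with those of $\ann{m+1}{j}(V\XF)$ and $\bnn{m+1}{j}(V\XF)$. These follow by a parallel induction, exploiting Lemma \ref{paM}(b) and (e) to express $\partial^m \Mm{m}{j+1}$ as $\BBB{m-1}{j+1}$ and $\partial^1\xm{m}{j+m+1}$ as $\FFF{m}{j}$, and then iterating Lemma \ref{cof=cof}; once these are in hand, the two inductive identifications above go through verbatim.
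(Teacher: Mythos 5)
Your argument follows the paper's proof almost step for step: induction on $m$, the $m=1$ base case unpacking $\cof^{(1)}$ of $\AAA{1}{j}$ and $\BBB{1}{j}$ via Definition \ref{ealphabeta}, and then the inductive step decomposing the cofiber of an $m$-cube map along a boundary coordinate via Lemma \ref{cof=cof} and Lemma \ref{paM}, identifying the resulting square of $(m-1)$-cubes with the defining square \eqref{eqdefalpha} for $\widetilde\alpha$ (resp.\ $\widetilde\beta$). Two small points to flag: first, you say you split $\AAA{m+1}{j}$ ``along the first coordinate'' and write $\partial^1\AAA{m+1}{j}$, but the identities you then invoke from Lemma \ref{paM}(c) concern the \emph{last} coordinate ($\pa{m+1}{0}$ and $\pa{m+1}{1}$); splitting $\AAA$ along the first coordinate does not produce $\AAA{m}{j}$ and $\FFF{m+1}{j}$ as the two legs (the roles of first and last coordinate in $\AAA$ and $\BBB$ are swapped, as your own treatment of $\BBB$ via Lemma \ref{paM}(d) correctly uses $\partial^1$). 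Second, your explicit verification of the nullhomotopy condition via \eqref{eqdiagnull} and Lemma \ref{lnullcomp} is a welcome addition that the paper leaves implicit (the paper only addresses strong cofibrancy of the squares up front, letting commutativity emerge from the cube-boundary identifications); your route through Lemma \ref{lnullcomp} is a slight detour, since applying $\cof^{(m)}$ to \eqref{eqdiagnull} directly yields commutativity of the square \eqref{eqdefalpha}, which is all that is needed.
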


\begin{proof}
Because \w{\XF} is strongly cofibrant, by Lemma \ref{paM} and Remark \ref{rtwocubes} all the squares appearing in the construction of \w{\ann{m}{j}(V\XF)} and \w{\bnn{m}{j}(V\XF)} in \S \ref{Gen defn 2 of 2} are strongly cofibrant.

We prove the Proposition by induction on $m$:

If  \w[,]{m=1} the cofiber of the $1$-cube map \w[]{\AAA{1}{j}:(i^{X_{j}})\to(f_{j+1})} is \w[,]{\widetilde{\alpha}(f_j,f_{j+1},F_j)} and the cofiber of the $1$-cube
\w[]{\BBB{1}{j}:(f_{j})\to(F_{j})}  is \w[,]{\widetilde{\beta}(f_j,f_{j+1},F_j)} as in \wref[.]{eqalphat}
Here \w{\FFF{1}{j}} is  \w[,]{F_j} so \w{ {\widetilde{F}}^{(1)}_j ={\cof}^{(0)}(F_j)=F_j } and thus:
$$
\cof^{(1)}(\AAA{1}{j}) = \widetilde{\alpha}(f_j,f_{j+1}, {\widetilde{F}}^{(1)}_j )= \ann{1}{j}\hspace{4mm}\text{and}\ \
\cof^{(1)}(\BBB{1}{j}) =\widetilde{\beta}(f_j,f_{j+1}, {\widetilde{F}}^{(1)}_j )= \bnn{1}{j}~.
$$

Now suppose the statement is true for \w[.]{m-1}
By Proposition \ref{cof=cof}, \w{\cof^{(m)}} of \w{\AAA{m}{j} : \mathbb{C}^{(m)}X_j \to \Mm{m}{j+1} }
is the result of applying \w{\cof^{(1)}} to the following horizontal map of vertical $1$-cubes:
\mydiagram[\label{absquare}]{
\cof^{(m-1)}(\pa{m}{0} \mathbb{C}^{(m)}X_j) \ar[d]_{\cof^{(m-1)}(\partial^{m}\mathbb{C}^{(m)}X_j)}
 \ar[rr]^{\cof^{(m-1)}(\pa{m}{0}\AAA{m}{j})}  &&
 \cof^{(m-1)}(\pa{m}{0}\Mm{m}{j+1}) \ar[d]^{\cof^{(m-1)}(\partial^{m}\Mm{m}{j+1})}  \\
\cof^{(m-1)}(\pa{m}{1}\mathbb{C}^{(m)}X_j) \ar[rr]^{\cof^{(m-1)}(\pa{m}{1}\AAA{m}{j})} && \cof^{(m-1)}(\pa{m}{1}\Mm{m}{j+1})
}
By Lemmas \ref{paC} and \ref{paM}, diagram \wref{absquare} may be rewritten in the form:
\mydiagram[\label{abfmsquare}]{
\cof^{(m-1)}(\mathbb{C}^{(m-1)}X_j) \ar[d]_{\cof^{(m-1)}(\mathfrak{i}^{\mathbb{C}^{(m-1)}X_j } )}
 \ar[rr]^{\cof^{(m-1)}(\AAA{m-1}{j})}  &&
 \cof^{(m-1)}(\Mm{m-1}{j+1}) \ar[d]^{\cof^{(m-1)}(\BBB{m-1}{j+1})}  \\
\cof^{(m-1)}(C\mathbb{C}^{(m-1)}X_j) \ar[rr]^{\cof^{(m-1)}(\FFF{m}{j})} &&
\cof^{(m-1)}(\xm{m-1}{j+m+1})
}
By Example \ref{cof(CX)} and the induction hypothesis we get:
$$
\xymatrix@R=25pt {  \widetilde{\Sigma}^{(m-1)}X_j \ar[d]_{i^{\widetilde{\Sigma}^{(m-1)}X_j}}
 \ar[rr]^{\ann{m-1}{j}}  &&
 \cof^{(m-1)}(\bnn{m-2}{j+1}) \ar[d]^{\bnn{m-1}{j+1}}  \\
C\widetilde{\Sigma}^{(m-1)}X_j  \ar[rr]^{{\widetilde{F}}^{(m)}_j} && \cof^{(m-1)}({\widetilde{F}}^{(m-1)}_{j+1})  \\
}
$$
(where the right vertical map is obtained by commuting cones with cofibers).

Thinking of this as a horizontal map of vertical $1$-cubes (as above), we may identify it
with \w{\ann{m}{j}\XFn} (see \wref[).]{eqdefalpha}

A similar argument shows that \w[.]{\cof^{(m)}(\BBB{m}{j}\XF)=\bnn{m}{j}\XFn}
\end{proof}
\begin{corollary}
Under the assumptions of Proposition \ref{prop1} we have
$$
\Tn{n}{1}(V\XF)~=~
\cof^{(n)}(\BBB{n}{2}\XF) \circ \cof^{(n)}(\AAA{n}{1}\XF)
$$
\end{corollary}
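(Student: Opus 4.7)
The plan is to deduce the corollary directly from Proposition \ref{prop1} together with the definition of the recursive Toda bracket given in \S \ref{Gen defn 2 of 2}. Proposition \ref{prop1} already asserts, as part of its conclusion, that $V\XF$ is a genuine $n$-th order recursive Toda system, so that the maps $\ann{n}{1}(V\XF)$ and $\bnn{n}{2}(V\XF)$ of Definition \ref{Gen defn 2 of 2} are defined and in particular that the composite $\bnn{n}{2}(V\XF)\circ\ann{n}{1}(V\XF)$ makes sense.

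First I would specialize Proposition \ref{prop1} to the top level $m=n$, taking $j=1$ for the $\AAA$ statement and $j=2$ for the $\BBB$ statement, in order to extract the two identifications
$$
\cof^{(n)}(\AAA{n}{1}\XF)\;=\;\ann{n}{1}(V\XF)
\qquad\text{and}\qquad
\cof^{(n)}(\BBB{n}{2}\XF)\;=\;\bnn{n}{2}(V\XF)
$$
as honest maps in $\Dd$. Next I would invoke Definition \ref{dredchcx} together with \S \ref{Gen defn 2 of 2}, where the value $\Tn{n}{1}$ of the recursive Toda bracket of any $n$-th order recursive Toda system is defined to be the composite $\bnn{n}{2}\circ\ann{n}{1}$. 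Substituting the two identifications above into
$$
\Tn{n}{1}(V\XF)\;=\;\bnn{n}{2}(V\XF)\circ\ann{n}{1}(V\XF)
$$
immediately yields the desired formula.

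There is essentially no obstacle here beyond what has already been done. The substance of the matching between the cubical and recursive pictures has been absorbed into Proposition \ref{prop1}, which inductively identifies the cofiber-of-cube-map operations $\cof^{(m)}(\AAA{m}{j}\XF)$ and $\cof^{(m)}(\BBB{m}{j}\XF)$ with the recursive maps $\ann{m}{j}(V\XF)$ and $\bnn{m}{j}(V\XF)$. The corollary is simply the statement that composing these two identifications at the top level $m=n$ reproduces the recursive Toda bracket, by virtue of its very definition as a composite $\bnn{n}{2}\circ\ann{n}{1}$.
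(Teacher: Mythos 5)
Your proof is correct and matches the paper's (implicit) reasoning: the corollary is stated without proof precisely because it is the immediate consequence of specializing Proposition \ref{prop1} to $m=n$ with $j=1$ (for $\AAA{}{}$) and $j=2$ (for $\BBB{}{}$) and substituting into the defining formula $\Tn{n}{1}(V\XF)=\bnn{n}{2}(V\XF)\circ\ann{n}{1}(V\XF)$ from \S\ref{Gen defn 2 of 2} and Definition \ref{dredchcx}.
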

\begin{prop}\label{XF to RecVXF}
Given a strongly cofibrant $n$-th order cubical Toda system \w{\XF} in $\Dd$, we have
a natural commuting diagram
$$
\xymatrix@R=25pt {
\mathbb{C}^{(n)}X_1 \ar[d]_{\vartha{\mathbb{C}^{(n)}X_1}}^{\simeq}
\ar[rrr]^{\AAA{n}{1} \XF} &&& \Mm{n}{2}\XF
\ar[d]^{\vartha{\Mm{n}{2}}}_{\simeq} \ar[rrr]^{\BBB{n}{2}\XF} &&& \xm{n}{n+3}\XF
\ar[d]_{\simeq}^{(r^{\xm{n}{n+3}})_\ast} \\
\CCn{n}{X_1} \ar[rrr]^>>>>>>>>>>>>>{\AAAn{n}{1} (V\XF)} &&&  \Mn{n}{2}(V\XF)
\ar[rrr]^{\BBBn{n}{2}(V\XF)} &&& \xn{n}{n+3}(V\XF)
}
$$
with vertical weak equivalences.
\end{prop}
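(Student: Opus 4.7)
The three vertical maps are $\vartha{\mathbb{C}^{(n)}X_1}$, $\vartha{\Mm{n}{2}\XF}$, and $(r^{\xm{n}{n+3}\XF})_\ast$. The plan is to verify that each square commutes by exhibiting it as an instance of a general construction, and then to show separately that the three vertical maps are weak equivalences.

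For the left square, I would use naturality of the transformation $\vartha_{(-)}$ applied to the $n$-cube map $\AAA{n}{1}\XF$. Three identifications are needed: $\VV{\mathbb{C}^{(n)}X_1} = \CCn{n}{X_1}$ (Example \ref{egva}), $\VV{\Mm{n}{2}\XF} = \Mn{n}{2}(V\XF)$, and $\VV{\AAA{n}{1}\XF} = \AAAn{n}{1}(V\XF)$. The second identification follows by unwinding Definition \ref{dva} inductively: at each step $\pa{n}{1}\VV{\Mm{n}{2}\XF}$ equals $\Fin{n-1}{\cof^{(n-1)}(\pa{n}{1}\Mm{n}{2}\XF)}$, and Lemma \ref{paM}(a,b) together with Proposition \ref{prop1} identify this iterated cofiber as $\cof(\Fn{n-1}{2})$, matching Definition \ref{Mn}. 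The third identification is obtained by viewing $\AAA{n}{1}\XF$ as an $(n+1)$-cube and invoking the observation from the proof of Lemma \ref{Important} that $\VV{-}$ of a map of $n$-cubes is the bottom horizontal in that lemma; Definition \ref{indchcx} then converts the resulting iterated cofibers of $\FFF{k}{1}\XF$ into the nullhomotopies $\Fn{k}{1}$ assembling into $\AAAn{n}{1}(V\XF)$.

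For the right square, I would apply Lemma \ref{Important} directly to $\BBB{n}{2}\XF : \Mm{n}{2}\XF \to \xm{n}{n+3}\XF$. The target $\xm{n}{n+3}\XF$ satisfies the hypothesis of Lemma \ref{Important2}, since $\xm{n}{n+3}\XF(\vva) = C\Mm{n}{2}\XF(\vva) \simeq \ast$ for every $\vva \neq (1,\dots,1)$. Thus $r^{\xm{n}{n+3}\XF} : X_{n+3} \to \cof^{(n)}(\xm{n}{n+3}\XF)$ is a weak equivalence, and $\cof^{(n)}(\xm{n}{n+3}\XF)$ is identified with $\cof(\Fn{n}{2})$ via Proposition \ref{prop1} applied to $\FFF{n+1}{1}\XF$, so $\Fin{n}{\cof^{(n)}(\xm{n}{n+3}\XF)}$ becomes $\xn{n}{n+3}(V\XF)$. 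The bottom map produced by Lemma \ref{Important}, namely $(\cof^{(n)}(\BBB{n}{2}\XF) \circ r^{\cof^{(n-1)}(\partial^n \Mm{n}{2}\XF)})_\ast$, is then identified by Proposition \ref{prop1} with $\bnn{n}{2}(V\XF) \circ r^{\bnn{n-1}{2}(V\XF)} = \BBBn{n}{2}(V\XF)$.

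Finally, the three vertical maps are weak equivalences. Both $\mathbb{C}^{(n)}X_1$ and $\Mm{n}{2}\XF$ are strongly cofibrant (Remark \ref{rtwocubes}), and by Definitions \ref{CC^mX} and \ref{Mm} their values at every vertex $\vva$ with $\deg(\vva) \neq \init(\vva)$ have the form $C^k(-)$ with $k \geq 1$, hence are contractible; so Lemma \ref{lnatwe} makes $\vartha{\mathbb{C}^{(n)}X_1}$ and $\vartha{\Mm{n}{2}\XF}$ weak equivalences. The third vertical map is a weak equivalence by Lemma \ref{Important2}, as already noted. The main obstacle I expect is the careful vertex-by-vertex matching of $\VV{\Mm{n}{2}\XF}$ and $\VV{\AAA{n}{1}\XF}$ with $\Mn{n}{2}(V\XF)$ and $\AAAn{n}{1}(V\XF)$, since the recursive construction of $\VV$ has to be lined up with the inductive definition of the recursive Toda data using Proposition \ref{prop1} and Lemma \ref{paM}; once these identifications are in place, the commutativity of both squares follows formally from naturality of $\vartha$ and Lemma \ref{Important}.
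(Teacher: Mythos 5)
Your proposal is correct and takes essentially the same route as the paper: naturality of $\vartha$ for the left square (after identifying $\VV{\mathbb{C}^{(n)}X_1}$, $\VV{\Mm{n}{2}\XF}$, and $\VV{\AAA{n}{1}\XF}$ with their recursive counterparts via Example \ref{egva}, Lemma \ref{paM}, and Proposition \ref{prop1}), and Lemma \ref{Important} together with Lemma \ref{Important2} for the right square, with Lemma \ref{lnatwe} handling the left two vertical weak equivalences. One small slip: you invoke ``Proposition \ref{prop1} applied to $\FFF{n+1}{1}\XF$'' to identify $\cof^{(n)}(\xm{n}{n+3}\XF)$ with $\cof(\Fn{n}{2})$, but $\FFF{n+1}{1}$ is out of range (Definition \ref{diagnull} only provides $\FFF{m}{j}$ for $m\leq n$); the identification you want follows instead from Lemma \ref{paM}(e), which gives $\partial^1\xm{n}{n+3}\XF=\FFF{n}{2}\XF$, combined with Lemma \ref{cof=cof} and Definition \ref{indchcx}.
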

\begin{proof}
Note that the cube map \w[]{\VV{\AAA{n}{1} \XF}: \VV{\mathbb{C}^{(n)} X_1} \to \VV{\Mm{n}{2}\XF}} is equal to \w[,]{\AAAn{n}{1} (V\XF)} so the left square commutes by naturality of $\vartha{}$ (Definition \ref{dva}),
and the two left vertical maps are weak equivalences by Lemma \ref{lnatwe}.
In addition \w{\BBBn{n}{2}(V\XF) =  (\bnn{n}{2}(V\XF) \circ r^{\bnn{n-1}{2}(V\XF)})_{\ast}} by Definition \ref{BBBn}, and it is equal to \w{(\cf{n}{\BBB{n}{2}\XF}\circ r^{\cf{n-1}{\BBB{n-1}{2}\XF}})_{\ast} } by
Proposition \ref{prop1}. By Lemma \ref{paM}(b)
we can replace \w{\BBB{n-1}{2}\XF} by \w[,]{\partial\sp{n}\Mm{n}{2}\XF} so
the right square commutes according to Lemma \ref{Important}, and by Lemma \ref{Important2} the right vertical map
is a weak equivalence.
\end{proof}

\begin{corollary}
If \w{\XF} and \w{\XG} are strongly cofibrant $n$-th order cubical Toda systems in $\Dd$ and
\w[,]{\XF \toss \XG} then \w[.]{V\XF \tosr V\XG}
\end{corollary}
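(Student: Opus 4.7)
The plan is to apply Proposition \ref{XF to RecVXF} to both $\XF$ and $\XG$, and then splice the resulting natural weak equivalences onto the endpoints of the hypothesized zigzag $\XF \toss \XG$.

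By Definition \ref{relation between systyms}, the assumption $\XF \toss \XG$ gives a zigzag of weak equivalences connecting the two cubical diagrams
\[ \mathbb{C}^{(n)}X_1 \xra{\AAA{n}{1}\XF} \Mm{n}{2}\XF \xra{\BBB{n}{2}\XF} \xm{n}{n+3}\XF \]
and
\[ \mathbb{C}^{(n)}X'_1 \xra{\AAA{n}{1}\XG} \Mm{n}{2}\XG \xra{\BBB{n}{2}\XG} \xm{n}{n+3}\XG \]
in the projective model structure on the appropriate diagram-of-cubes category. Proposition \ref{XF to RecVXF}, applied separately to $\XF$ and to $\XG$, then yields a single step \wh with vertical maps given by $\vartha{-}$ in the first two positions and by $r^{-}_{\ast}$ in the third \wh from each of these cubical diagrams to the corresponding recursive diagram for $V\XF$, respectively $V\XG$.

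Splicing these two short zigzags onto the ends of the zigzag coming from the hypothesis produces a single zigzag of weak equivalences between the diagrams
\[ \CCn{n}{X_1} \xra{\AAAn{n}{1}(V\XF)} \Mn{n}{2}(V\XF) \xra{\BBBn{n}{2}(V\XF)} \xn{n}{n+3}(V\XF) \]
and
\[ \CCn{n}{X'_1} \xra{\AAAn{n}{1}(V\XG)} \Mn{n}{2}(V\XG) \xra{\BBBn{n}{2}(V\XG)} \xn{n}{n+3}(V\XG), \]
which by Definition \ref{dtosr} is exactly the statement $V\XF \tosr V\XG$.

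The step I expect to require the most care is checking that the intermediate terms in the given zigzag $\XF \toss \XG$ can indeed be slotted into the composite chain: they need not themselves arise from cubical Toda systems, so one must verify that the ingredients of the $V$-construction \wh namely $\VV{-}$, $\cof^{(m)}$ applied to the relevant $\FFF{m}{j}$, and the structure maps $r^{-}$ \wh extend naturally to the diagram level and preserve the appropriate weak equivalences. This is controlled by Lemma \ref{lnatwe} for $\vartha{-}$, by Lemma \ref{Important2} together with Proposition \ref{gen hcolim=cof} for the cofiber and structure-map components, and by Lemma \ref{lfiltr} if a strongly cofibrant replacement of some intermediate diagram is required along the way.
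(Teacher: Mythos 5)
Your main argument is correct and is exactly the intended one: the paper gives this Corollary without proof because it follows immediately from Proposition \ref{XF to RecVXF} by splicing. The zigzag realizing \w{V\XF \tosr V\XG} is obtained by prepending the single weak equivalence of Proposition \ref{XF to RecVXF} for \w{\XF} (run backwards, from the recursive sequence to the cubical one), then following the given zigzag for \w[,]{\XF \toss \XG} and then appending the weak equivalence of Proposition \ref{XF to RecVXF} for \w[.]{\XG}

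The cautionary final paragraph, however, addresses a non-issue and is somewhat misleading. You do not need the intermediate diagrams of the given zigzag to arise from cubical Toda systems, nor do you need to apply \w[,]{V} \w[,]{\VV{-}} \w[,]{\cof^{(m)}} or \w{r^{-}} to them. The relation \w{\tos} of Definition \ref{rprojmodst} (which underlies both \w{\toss} and \w[)]{\tosr} is defined on raw objects of the diagram category \w[,]{\Dd^{\In{}{}}} via zigzags of levelwise weak equivalences, with no requirement that the nodes be Toda systems of either kind. Consequently the intermediate terms of the hypothesized zigzag are simply left as they are; the only new arrows in the spliced zigzag are the two weak equivalences supplied by Proposition \ref{XF to RecVXF} at the two ends. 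The invocations of Lemma \ref{lnatwe}, Lemma \ref{Important2}, Proposition \ref{gen hcolim=cof}, and Lemma \ref{lfiltr} in that paragraph are not needed here (they are already absorbed into the proof of Proposition \ref{XF to RecVXF}, which is what makes the endpoint maps weak equivalences); this is also where the hypothesis that \w{\XF} and \w{\XG} are strongly cofibrant enters, since Proposition \ref{XF to RecVXF} is stated only for strongly cofibrant systems.
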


We may summarize the results of this section in the following generalization of Proposition \ref{def 1 = def 2 of 2}:

\begin{thm}\label{def1=def2}
Given a strongly cofibrant $n$-th order cubical Toda system\w{\XF} in $\Dd$, we have a commutative diagram:
$$
\xymatrix@R=18pt {
L^n X_1  \ar[rrr]^{\Tm{n}{1}\XF}
 \ar[d]_{\zeta_{\mathbb{C}^{n}X_1}}^{\simeq}  &&& X_{n+3}
\ar[d]^>>>>{r^{\xm{n}{n+3}}}_{\simeq} \\
{\widetilde{\Sigma}}^{n} X_1
 \ar[rrr]_{\Tn{n}{1}(V\XF) }
&&& \cf{n}{\xm{n}{n+3}}
}
$$
\end{thm}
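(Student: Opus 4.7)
The plan is to apply (homotopy) cofiber functors to the commutative ladder of $n$-cube sequences from Proposition \ref{XF to RecVXF}: the top composite there yields $\Tm{n}{1}\XF$ by Proposition \ref{T=AcircB}, while the bottom composite yields $\Tn{n}{1}(V\XF)$ by Proposition \ref{nToda=cofb circ cofa}. The vertical weak equivalences of the target square will be $\zeta_{\mathbb{C}^{(n)}X_1}$ on the left (Proposition \ref{gen hcolim=cof} combined with the identification $L^n X_1 = \hcof^{(n)}(\mathbb{C}^{(n)}X_1)$ from Example \ref{cof(CX)}) and $r^{\xm{n}{n+3}}$ on the right (Lemma \ref{Important2} applied to $\xm{n}{n+3}\XF$, whose non-terminal vertices are cones and hence contractible).

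Concretely, I would interpolate through a two-square diagram whose top row is
\[
L^n X_1 \xra{\hcf{n}{\AAA{n}{1}\XF}} \hcof^{(n)}(\Mm{n}{2}\XF) \xra{\hcof'^{(n)}(\BBB{n}{2}\XF)} X_{n+3}
\]
and whose bottom row is
\[
\widetilde{\Sigma}^n X_1 \xra{\cof^{(n)}(\AAA{n}{1}\XF)} \cof^{(n)}(\Mm{n}{2}\XF) \xra{\cof^{(n)}(\BBB{n}{2}\XF)} \cof^{(n)}(\xm{n}{n+3}\XF),
\]
with vertical comparison maps $\zeta_{\mathbb{C}^{(n)}X_1}$, $\zeta_{\Mm{n}{2}\XF}$ and $r^{\xm{n}{n+3}}$. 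Commutativity of the left square is naturality of $\zeta$ applied to the $n$-cube map $\AAA{n}{1}\XF$.

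The main obstacle is commutativity of the right square, i.e.\ the identity
\[
r^{\xm{n}{n+3}}\circ\hcof'^{(n)}(\BBB{n}{2}\XF)=\cof^{(n)}(\BBB{n}{2}\XF)\circ\zeta_{\Mm{n}{2}\XF}.
\]
Using the description of $\hcof'^{(n)}(\BBB{n}{2}\XF)$ from Remark \ref{hcoftoB} in terms of the generating cone summands $R\Ac(\vec{r}_k)=C\Mm{n}{2}\XF(\vec{r}_k)$ together with the terminal vertex $\Mm{n}{2}\XF(1,\dots,1)=X_{n+2}$, and the formula for $\BBB{n}{2}\XF$ from Definition \ref{BBB} (the cone inclusion $i^{\Mm{n}{2}(\vva)}$ for $\vva\neq(1,\dots,1)$ and $f_{n+2}$ at the terminal vertex), the check reduces to a vertex-by-vertex comparison: on each cone summand both composites factor through $C\Mm{n}{2}\XF(\vec{r}_k)$, whose image is killed in $\cof^{(n)}(\xm{n}{n+3}\XF)$ since that summand is a non-terminal vertex of $\xm{n}{n+3}\XF$; on the terminal vertex both composites agree with $r^{\xm{n}{n+3}}\circ f_{n+2}$.

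To conclude, I would identify the bottom row with $\Tn{n}{1}(V\XF)$: Lemma \ref{lcfw=cf} yields $\cof^{(n)}(\vartha{\Ac})=\Id$, so under $\widetilde{\Sigma}^n X_1=\cof^{(n)}(\mathbb{C}^{(n)}X_1)=\cof^{(n)}(\CCn{n}{X_1})$ and the analogous identification for $\Mn{n}{2}(V\XF)$, Proposition \ref{prop1} gives $\cof^{(n)}(\AAA{n}{1}\XF)=\ann{n}{1}(V\XF)=\cof^{(n)}(\AAAn{n}{1}(V\XF))$ and $\cof^{(n)}(\BBB{n}{2}\XF)=\bnn{n}{2}(V\XF)=\cof^{(n)}(\BBBn{n}{2}(V\XF))$, whose composite is $\Tn{n}{1}(V\XF)$ by Proposition \ref{nToda=cofb circ cofa}. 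Concatenating the two commuting squares produces the square claimed by the theorem.
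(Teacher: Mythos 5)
Your proof is correct and follows essentially the same route as the paper's: the central object in both is the two-square ladder interpolating between the top row of homotopy cofibers (whose composite is $\Tm{n}{1}\XF$ by Proposition~\ref{T=AcircB}) and the bottom row of strict cofibers (whose composite is $\Tn{n}{1}(V\XF)$ via Proposition~\ref{prop1} and Lemma~\ref{lcfw=cf}), with vertical maps $\zeta$ and $r^{\xm{n}{n+3}}$. The only stylistic difference is that you carry out the vertex-by-vertex verification that $r^{\xm{n}{n+3}}\circ\hcof'^{(n)}(\BBB{n}{2}\XF)=\cof^{(n)}(\BBB{n}{2}\XF)\circ\zeta_{\Mm{n}{2}\XF}$ and pass to the recursive side via Proposition~\ref{prop1} directly, whereas the paper cites Lemma~\ref{Important2} for the right square and routes the identification of the bottom row through Proposition~\ref{XF to RecVXF} before invoking Proposition~\ref{nToda=cofb circ cofa}; both rely on the same underlying facts.
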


See Definitions \ref{Gen defn 1 of 2}, \ref{dredchcx}, and \ref{indchcx}.

\begin{proof}
Applying  Proposition \ref{gen hcolim=cof} and Lemma \ref{Important2} to \wref{eqtwocubes} yields the commuting diagram:
\myssdg[\label{ababab}]{
\hcf{n}{\mathbb{C}^{(n)}X_1} \ar[d]_{\zeta_{\mathbb{C}^{(n)}X_1}}^{\simeq}
\ar[rrr]^{\hcf{n}{\AAA{n}{1} }} &&& \hcf{n}{\Mm{n}{2}}
\ar[d]^{\zeta_{\Mm{n}{2}}}_{\simeq} \ar[rrr]^{\hcof'^{(n)}(\BBB{n}{2})} &&& X_{n+3}
\ar[d]_{\simeq}^{r^{\xm{n}{n+3}}} \\
\cf{n}{\mathbb{C}^{(n)}X_1} \ar[rrr]^>>>>>>>>>>>>>{\cf{n}{\AAA{n}{1} }} &&&  \cf{n}{\Mm{n}{2}}
\ar[rrr]^{\cf{n}{\BBB{n}{2}}} &&& \cf{n}{\xm{n}{n+3}}~,
}
where the map \w{\hcof'^{(n)}(\BBB{n}{2})} was described by Remark \ref{hcoftoB} (see also Proposition \ref{T=AcircB}).

Applying \w{\cof^{(n)}} to the left square and \w{\cof^{(n+1)}} to the right maps in the diagram of Proposition \ref{XF to RecVXF} yields
a commuting diagram, in which the vertical maps are the identity by Lemma \ref{lcfw=cf}, and whose top row is the bottom row of
\wref[.]{ababab} Thus \wref{ababab} is in fact:
\myssdg[\label{abababab}]{
\hcf{n}{\mathbb{C}^{(n)}X_1} \ar[d]_{\zeta_{\mathbb{C}^{(n)}X_1}}^{\simeq}
\ar[rrr]^{\hcf{n}{\AAA{n}{1} }} &&& \hcf{n}{\Mm{n}{2}}
\ar[d]^{\zeta_{\Mm{n}{2}}}_{\simeq} \ar[rrr]^{\hcof'^{(n)}(\BBB{n}{2})} &&& X_{n+3}
\ar[d]_{\simeq}^{r^{\xm{n}{n+3}}} \\
\cf{n}{\CCn{n}{X_1}} \ar[rrr]^>>>>>>>>>>>>>{\cf{n}{\AAAn{n}{1} }} &&&  \cf{n}{\Mn{n}{2}}
\ar[rrr]^{\cf{n}{\BBBn{n}{2}}} &&& \cf{n}{\xn{n}{n+3}}~,
}
where \w{\AAAn{n}{1}=\AAAn{n}{1}(V\XF)}  and \w[.]{\BBBn{n}{2}=\BBBn{n}{2}(V\XF)}
The result follows by Propositions \ref{T=AcircB} and \ref{nToda=cofb circ cofa}.
\end{proof}

By Theorem \ref{reduced rectifiyng} we conclude
\begin{corollary}
Given a strongly cofibrant $n$-th order cubical Toda system \w{\XF} of length \w{n+2} with \w{\Tm{n}{1}\XF} nullhomotopic, the underlying linear diagram \w{\Xs} is rectifiable.
\end{corollary}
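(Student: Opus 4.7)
The plan is to deduce this corollary by chaining Theorem \ref{def1=def2} with Theorem \ref{reduced rectifiyng}. In more detail: starting from the strongly cofibrant cubical Toda system $\XF$ with $\Tm{n}{1}\XF\sim\ast$, I will first pass to the associated recursive system $V\XF$ of Definition \ref{indchcx}, then transport the vanishing of the Toda bracket across the equivalence provided by Theorem \ref{def1=def2}, and finally invoke Theorem \ref{reduced rectifiyng} to produce a rectification.

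More concretely, the first step is simply to observe that because $\XF$ is strongly cofibrant, Proposition \ref{prop1} applies and gives us a bona fide $n$-th order recursive Toda system $V\XF$ (whose lower-order data are obtained from $\XF$ by iterated strict cofibers of the diagrammatic nullhomotopies $\FFF{m}{j}\XF$). By construction, the underlying linear diagram of $V\XF$ agrees in its first two maps with $\Xs$, while its subsequent maps are $\tos$-equivalent to the later portion of $\Xs$ via the vertical weak equivalences of Proposition \ref{XF to RecVXF}; hence $V\XF$ and $\Xs$ have weakly equivalent underlying linear diagrams, and rectifying one rectifies the other in the sense of Definition \ref{rectify def}.

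Next, Theorem \ref{def1=def2} provides a commutative square
\[
\xymatrix@R=16pt{
L^n X_1 \ar[rr]^{\Tm{n}{1}\XF} \ar[d]_{\simeq} && X_{n+3} \ar[d]^{\simeq} \\
\ee{n}{X_1} \ar[rr]_{\Tn{n}{1}(V\XF)} && \cf{n}{\xm{n}{n+3}}
}
\]
with vertical weak equivalences. Since $\Tm{n}{1}\XF$ is assumed nullhomotopic, commutativity of this square together with the fact that the vertical maps are weak equivalences forces $\Tn{n}{1}(V\XF)$ to be nullhomotopic as well. This step is the essential use of the equivalence between the two definitions.

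Finally, Theorem \ref{reduced rectifiyng} applied to the recursive Toda system $V\XF$ (whose top Toda bracket is now nullhomotopic) produces an explicit rectification of the underlying linear diagram of $V\XF$. Combined with the weak equivalence between this diagram and $\Xs$ noted above, this gives a rectification of $\Xs$ itself. The only conceivable obstacle is the bookkeeping at the first step, namely verifying that $V\XF$ really is recursive in the strict sense of Definition \ref{dredchcx} (so that Theorem \ref{reduced rectifiyng} applies without appeal to Remark \ref{rredchcx}); but this is guaranteed by the strong cofibrancy hypothesis on $\XF$ via Proposition \ref{prop1}, so no additional work is required.
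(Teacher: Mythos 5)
Your proof is correct and matches the paper's approach, which is simply to transport the vanishing of $\Tm{n}{1}\XF$ across the commutative square of Theorem \ref{def1=def2} (whose vertical maps are weak equivalences) to conclude $\Tn{n}{1}(V\XF)\sim\ast$, and then invoke Theorem \ref{reduced rectifiyng}. One small clarification: the concern in your middle paragraph is unnecessary, since by Definition \ref{indchcx} the recursive system $V\XF$ has \emph{literally the same} underlying linear diagram $\Xs$ as $\XF$ (only the nullhomotopy data $\Fn{m}{j}$ is rebuilt from the $\FFF{m}{j}\XF$), so Theorem \ref{reduced rectifiyng} applies to $\Xs$ directly without any appeal to $\tos$-equivalence of linear diagrams.
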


We note the following variation for future reference:

\begin{prop}
Given a strongly cofibrant $n$-th order cubical Toda system\w{\XF} of length \w[,]{n+2}
with \w{f_{n+2} \circ \hcf{n}{\BBB{n}{1}\XF}} nullhomotopic, then
the underlying linear diagram \w{\Xs} is rectifiable.
\end{prop}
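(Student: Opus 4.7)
The plan is to mimic the proof of Theorem \ref{reduced rectifiyng}, replacing the nullhomotopy of $\Tm{n}{1}\XF$ by the weaker hypothesis on $f_{n+2} \circ \hcf{n}{\BBB{n}{1}\XF}$. First I would pass from $\XF$ to the recursive system $V\XF = \XFn$ via Proposition \ref{prop1}, obtaining nullhomotopies $\Fn{k}{j}$ and the maps $\ann{k}{j}, \bnn{k}{j}$ of Definition \ref{Gen defn 2 of 2}, with all the squares \eqref{eqdefalpha} strongly cofibrant. The construction in the proof of Theorem \ref{reduced rectifiyng}, applied to the initial segment $X_1 \to X_2 \to \cdots \to X_{n+2}$, yields the weakly equivalent ``telescope''
$$
X_1 \to X_2 \to \cof(\Fn{1}{1}) \to \cdots \to \cof(\Fn{n}{1}),
$$
with each consecutive composite strictly zero; this is the partial rectification that avoids the final map $f_{n+2}$.

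The core technical step is to identify $\hcf{n}{\BBB{n}{1}\XF} : \hcof^{(n)}(\Mm{n}{1}\XF) \to X_{n+2}$ (in the sense of Remark \ref{hcoftoB}) with a recursively-built map $\psi : \cof(\Fn{n}{1}) \to X_{n+2}$ up to $\tos$. The argument should follow the pattern of Proposition \ref{T=AcircB}, Proposition \ref{nToda=cofb circ cofa}, and Theorem \ref{def1=def2}: I would decompose $\BBB{n}{1}$ along its first coordinate using Lemma \ref{paM}(d) (which gives $\partial^1\BBB{n}{1}$ built from $\mathfrak{i}^{\mathbb{C}^{n-1}X_1}$ and $\BBB{n-1}{2}$), and combine Lemma \ref{cof=cof}, Proposition \ref{gen hcolim=cof}, and Lemma \ref{lnatwe} with induction on $n$ to see that successive layers of the homotopy cofiber of $\Mm{n}{1}$ match the iterated strict cofibers $\cof(\Fn{k}{1})$ produced by $V\XF$.

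Granted this identification, the hypothesis becomes $f_{n+2} \circ \psi \sim \ast$. Running the pushout manipulation at the end of the proof of Theorem \ref{reduced rectifiyng} on the cofibration sequence $\cof(\Fn{n-1}{1}) \hookrightarrow \cof(\Fn{n}{1}) \to \cof(\bnn{n-1}{1})$, this nullhomotopy extends to a map $s : \cof(\Fn{n}{1}) \to X_{n+3}$ whose pre-composition with $\cof(\Fn{n-1}{1}) \to \cof(\Fn{n}{1})$ is strictly zero. Appending $s$ to the telescope yields
$$
X_1 \to X_2 \to \cof(\Fn{1}{1}) \to \cdots \to \cof(\Fn{n}{1}) \xrightarrow{s} X_{n+3},
$$
which is $\tos$-equivalent to $\Xs$ and has all consecutive composites strictly zero, providing the required rectification.

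The main obstacle is the identification in the second paragraph. The $V$-functor of Definition \ref{indchcx} and Proposition \ref{prop1} is formulated for $\BBB{m}{j+1}$ with $j \geq 1$, and no direct recursive counterpart of $\BBB{n}{1}$ is given; consequently the naturality diagram of Proposition \ref{XF to RecVXF} does not apply off-the-shelf. I would therefore need a bespoke variant of that argument, built for the cube map $\BBB{n}{1} : \Mm{n}{1} \to \xm{n}{n+2}$, with strong cofibrancy tracked carefully so that both sides may be replaced by strict cofibers. Once this is in place, the remaining steps are essentially the same book-keeping used to derive the previous corollary from Theorem \ref{def1=def2}.
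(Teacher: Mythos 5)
Your overall strategy is exactly the paper's: pass to $V\XF$, use the partial rectification \eqref{rectify seq} from Theorem \ref{reduced rectifiyng}, and extend the final map over a cofiber using the nullhomotopy hypothesis. Your concern about the index range is also legitimate --- $\BBB{n}{1}$ and $\xm{n}{n+2}$ sit just outside the stated ranges of Definitions \ref{BBB}, \ref{xm}, and Proposition \ref{prop1} (phrased with $j\geq 2$ in $\BBB{m}{j}$), and the paper applies them to $j=1$ without comment.

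However, two of your concrete identifications are wrong. First, the recursive counterpart of $\hcf{n}{\BBB{n}{1}\XF}\colon \hcf{n}{\Mm{n}{1}}\to X_{n+2}$ (in the sense of Remark \ref{hcoftoB}) does not have source $\cof(\Fn{n}{1})$. Since $\cf{n}{\Mm{n}{1}}=\cof(\ann{n-1}{1})$ and $\cf{n}{\xm{n}{n+2}}=\cof(\Fn{n}{1})$, the $j=1$ instance of Proposition \ref{prop1} gives $\cf{n}{\BBB{n}{1}\XF}=\bnn{n}{1}\colon\cof(\ann{n-1}{1})\hookrightarrow\cof(\Fn{n}{1})$, and the comparison square has vertical weak equivalences $\zeta_{\Mm{n}{1}}\colon \hcf{n}{\Mm{n}{1}}\to\cof(\ann{n-1}{1})$ and $r^{\xm{n}{n+2}}\colon X_{n+2}\to\cof(\Fn{n}{1})$. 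Your $\psi$ (with source $\cof(\Fn{n}{1})$ and target $X_{n+2}$) would have to be a homotopy inverse of $r^{\xm{n}{n+2}}$, which is not what $\hcf{n}{\BBB{n}{1}\XF}$ models. Second, your proposed cofibration sequence $\cof(\Fn{n-1}{1})\hookrightarrow\cof(\Fn{n}{1})\to\cof(\bnn{n-1}{1})$ is not one: the map $\cof(\Fn{n-1}{1})\to\cof(\Fn{n}{1})$ appearing in \eqref{rectify seq} is $\bnn{n}{1}\circ r^{\bnn{n-1}{1}}$, which is not a cofibration, and $\cof(\bnn{n-1}{1})=\cof(\ann{n-1}{1})$ by Lemma \ref{cof(a)=cof(b)} is the \emph{source} of $\bnn{n}{1}$, not a cofiber in this sequence. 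The correct cofibration sequence is $\cof(\ann{n-1}{1})\hookrightarrow\cof(\Fn{n}{1})\to\cof(\bnn{n}{1})$, through which the hypothesis produces $s\colon\cof(\bnn{n}{1})\to X_{n+3}$; replacing $f_{n+2}$ by $s\circ r^{\bnn{n}{1}}$ (which is exactly the final map $\cof(\Fn{n}{1})\to X_{n+3}$ you meant to append) completes the rectification.
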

\begin{proof}
Setting \w[,]{\XFn:=V\XF} we showed in the proof of Theorem
\ref{reduced rectifiyng} how to rectify
\w{X_1 \stk{f_1} X_2 \stk{f_2}\dots X_{n+1} \stk{f_{n+1}} X_{n+2}}
by the sequence \wref[.]{rectify seq}

Because \w[,]{f_{n+2} \circ \hcf{n}{\BBB{n}{1}\XF} \sim \ast } we can choose a map \w{s:\cof(\bnn{n}{1}) \to X_{n+3}} fitting into
a commutative diagram:
$$
\xymatrix@R=20pt @C=40pt {
\hcf{n}{\Mm{n}{1}}  \ar[rr]^<<<<<<<<<<<<<<{\hcf{n}{\BBB{n}{1}\XF}} \ar[d]^{\simeq}_{\zeta_{\Mm{n}{1}}}  && X_{n+2} \ar[rr]^{f_{n+2}}
\ar[d]^{\simeq}_{r^{\xm{n}{n+2}}} && X_{n+3}\\
\cof(\ann{n-1}{1})\ar@{^{(}->}[rr]^{\bnn{n}{1}} && \cof(\Fn{n}{1})   \ar[rr]^{r^{\bnn{n}{1}}}
&& \cof(\bnn{n}{1}) \ar@{-->}[u]^{s}
}
$$
since the bottom row is a cofibration sequence.
We complete the rectification by replacing \w{f_{n+2}} with  \w[.]{s \circ r^{\bnn{n}{1}}}
\end{proof}

%
%
%
\sect{Passage from the recursive to the cubical definitions}\label{PRCD}

We now show how to lift a higher order recursive Toda system \w{\XFn} to a higher order cubical Toda system\w[,]{\XG}
using the diagrammatic description of \S \ref{nstwocubes}, and prove that this lifting process is inverse
(up to $\tos$) to the reduction \w[.]{\XF\mapsto V\XF}
Using Theorem \ref{def1=def2}, this implies that the two notions of Toda brackets are $\tos$-equivalent.

First recall the following special case of Lemma \ref{lfiltr}:

\begin{lemma}\label{full cofiber replacement}
Given an $n$-cube \w{\Ac} in $\Dd$,  let \w{\Ac'} denote its restriction to \w{\Ln{n-1}{}}  (see \S \ref{delln}).
Then any (strongly) cofibrant replacement for \w{\Ac'} can be extended to a strongly cofibrant replacement for \w[.]{\Ac}
\end{lemma}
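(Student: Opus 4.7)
The plan is to filter $\In{n}{}$ by degree and extend the given replacement by performing a single factorization at the top vertex. Setting $F_k:=\{\vva\in\In{n}{}:\deg(\vva)\le k\}$ yields a filtration compatible with Definition \ref{rprojmodst}, and $\Ln{n-1}{}$ coincides with $\In{n}{}$ after removal of the single top vertex $(1,\dots,1)$. Thus extending $\Bc'$ on $\Ln{n-1}{}$ to a diagram $\Bc$ on $\In{n}{}$ amounts to specifying the single object $\Bc(1,\dots,1)$ together with structure maps from $\Bc(\vva)$ for every $\vva\neq(1,\dots,1)$.

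First I would form the latching object $P:=\colim_{\Ln{n-1}{}}\Bc'$. The weak equivalence $\Bc'\xrightarrow{\simeq}\Ac'$, postcomposed with the structural maps $\Ac(\vva)\to\Ac(1,\dots,1)$ of the original $n$-cube, assembles into a cocone on $\Bc'$ under $\Ln{n-1}{}$, and hence induces a canonical comparison $\varphi:P\to\Ac(1,\dots,1)$. Using the model structure on $\Dd$, factor $\varphi$ as a cofibration $\iota:P\hookrightarrow\Bc(1,\dots,1)$ followed by a weak equivalence $p:\Bc(1,\dots,1)\xrightarrow{\simeq}\Ac(1,\dots,1)$. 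I would then define $\Bc:\In{n}{}\to\Dd$ by keeping the values of $\Bc'$ on $\Ln{n-1}{}$, taking this newly constructed object at $(1,\dots,1)$, and letting each new structure map $\Bc(\vva)\to\Bc(1,\dots,1)$ be the composite of the colimit coprojection $\Bc(\vva)\to P$ with $\iota$.

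To verify that $\Bc$ is a strongly cofibrant replacement of $\Ac$, I would observe that $\Bc$ restricts to $\Bc'$ on $\Ln{n-1}{}$, so the strong-cofibrancy conditions of Definition \ref{rprojmodst} are automatically inherited on each filtration stage $F_0\subset\cdots\subset F_{n-1}$. The only remaining stage adds the single object $(1,\dots,1)$, where the required latching map is precisely $\iota:P\to\Bc(1,\dots,1)$, a cofibration by construction. Pointwise, $\Bc\to\Ac$ is an objectwise weak equivalence: on $\Ln{n-1}{}$ it is the original $\Bc'\to\Ac'$, and at $(1,\dots,1)$ it is the weak equivalence $p$ arising from the factorization.

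The main (mild) obstacle is verifying that $\varphi$ is well-defined on the colimit, i.e.\ that the composites $\Bc'(\vva)\to\Ac(\vva)\to\Ac(1,\dots,1)$ form a compatible cocone under $\Bc'|_{\Ln{n-1}{}}$; this follows immediately from naturality of the weak equivalence $\Bc'\to\Ac'$ together with functoriality of $\Ac$. After that, the whole extension reduces to the standard cofibration/weak-equivalence factorization axiom in $\Dd$, which is exactly the inductive step used in Lemma \ref{lfiltr}.
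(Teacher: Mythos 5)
Your proof is correct, and it is exactly the argument the paper implicitly relies on: the paper disposes of this lemma by citing it as a special case of Lemma \ref{lfiltr}, which is itself stated without proof, and what you have written is the standard latching-object construction that proves Lemma \ref{lfiltr} at the top vertex (form $P=\colim_{\Ln{n-1}{}}\Bc'$, factor the induced map $P\to\Ac(1,\dots,1)$ as a cofibration followed by a weak equivalence, and extend). Since $(1,\dots,1)$ is terminal in $\In{n}{}$, the latching object genuinely is the full colimit over $\Ln{n-1}{}$, so the comparison map $\varphi$ and the cofibrancy check work exactly as you describe; no gap.
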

\begin{prop}\label{fullreplacemen}
Given an $n$-th order recursive Toda system \w{\XFn} in $\Dd$ of length \w[,]{n+2}
we have a strongly cofibrant replacement
\mydiagram[\label{eqreplacer}]{
\CCm{n}{X_1} \ar[rrr]^{\AAA{n}{1} \XG} &&&  \Mm{n}{2}\XG
 \ar[rrr]^{\BBB{n}{2}\XG} &&& \xm{n}{n+3}\XG
}
for the sequence:
\mydiagram[\label{eqreplaced}]{
\CCn{n}{X_1} \ar[rrr]^{\AAAn{n}{1} \XFn} &&&  \Mn{n}{2}\XFn
 \ar[rrr]^{\BBBn{n}{2}\XFn} &&& \xn{n}{n+3}\XFn~,
}
\end{prop}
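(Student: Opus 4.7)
The plan is to proceed by induction on $n$, constructing the strongly cofibrant replacement layer by layer using Lemma \ref{full cofiber replacement} (and, when needed, its general form Lemma \ref{lfiltr}). The diagram \wref{eqreplaced} is a functor from a finite poset obtained by gluing three copies of $\In{n}{}$ at the vertices where the cube maps $\AAAn{n}{1}\XFn$ and $\BBBn{n}{2}\XFn$ are defined; this poset is filtered in the sense of \S \ref{rprojmodst}, so the hypotheses for Lemma \ref{lfiltr} are met. Throughout, the idea is that all the $\ast$-valued vertices of $\CCn{n}{X_1}$, $\Mn{n}{2}\XFn$, and $\xn{n}{n+3}\XFn$ are to be replaced by appropriate iterated cones (which are weakly equivalent to $\ast$), and the top-vertex values $C\ee{n-1}{X_1}$, $\cof(\Fn{n-1}{2})$, and $\cof(\Fn{n}{2})$ are to be replaced by the corresponding objects $C^n X_1$, suitable $C^{n-1}X_j$'s, and $X_{n+3}$, with all the filling maps chosen so as to produce the higher coherence data of a cubical Toda system.

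For the base case $n=1$, I would cofibrantly replace the structure maps $r^{f_{j+1}}: X_{j+2}\to\cof(f_{j+1})$ (equivalently, work with the homotopy cofiber $\hcof(f_{j+1})$), and then lift the recursive nullhomotopy $\Fn{1}{j}: CX_j\to\cof(f_{j+1})$ along an acyclic cofibration to obtain a cubical nullhomotopy $\Fm{1}{j}: CX_j\to X_{j+2}$, using the lifting property against the cofibration $X_j\hookrightarrow CX_j$. For the inductive step, restrict the data of \wref{eqreplaced} along $\pa{n}{0}$ of each of the three constituent $n$-cubes; by Lemma \ref{npa} together with Lemma \ref{paCCnX}, this restriction is the diagrammatic form of an $(n{-}1)$-th order recursive Toda system obtained from $\XFn$, and the inductive hypothesis supplies a strongly cofibrant replacement by an $(n{-}1)$-th order cubical system $\XG'$. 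Then apply Lemma \ref{full cofiber replacement} three times in sequence (once for each of $\mathbb{C}^{(n)}X_1$, $\Mm{n}{2}$, and $\xm{n}{n+3}$) to extend $\XG'$ across the top-dimensional stratum; the newly introduced top nullhomotopies $\Fm{n}{1}$ and $\Fm{n}{2}$ arise as solutions to lifting problems against the cofibrations $i^{\mathbb{C}^{(n-1)}X_1}:\mathbb{C}^{(n-1)}X_1\hookrightarrow C\mathbb{C}^{(n-1)}X_1=\mathbb{C}^{(n)}X_1$ (and the analogous cofibration for $\Mm{n}{2}$), the lifts being compatible with $\Fn{n}{1}$ and $\Fn{n}{2}$ under the weak equivalences produced at the previous stage.

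The main obstacle is verifying that the lifts produced above really assemble into a cubical Toda system $\XG$, i.e.\ that the top nullhomotopies $\Fm{n}{1}$ and $\Fm{n}{2}$ satisfy the coherence identities \wref{eq2} built into Definitions \ref{Mm}, \ref{AAA}, and \ref{BBB}. By Lemma \ref{paM} these identities are boundary conditions on the $(n{-}1)$-faces of $\AAA{n}{j}\XG$ and $\BBB{n}{j}\XG$, which are exactly what the inductive replacement $\XG'$ provides. Consequently the lifting problem for $\Fm{n}{j}$ can be posed relative to its prescribed boundary, and the strong cofibrancy of the inductive data guarantees that such a relative lift exists. Taken together, these lifts furnish the cubical Toda system $\XG$ whose diagrammatic form \wref{eqreplacer} is the sought-after strongly cofibrant replacement.
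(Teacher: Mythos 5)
Your proposal follows essentially the same strategy as the paper's proof: an induction on the cube dimension, a cofibrant replacement built up step by step, the key observation (via Lemma \ref{paM}) that away from the final vertex the required cube maps are canonically determined by the lower-dimensional data, and Lemma \ref{full cofiber replacement} (the case of Lemma \ref{lfiltr} needed here) to fill in the last vertex.

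One imprecision is worth flagging. You claim that restricting \wref{eqreplaced} along $\pa{n}{0}$ of each of the three constituent $n$-cubes yields the diagrammatic form of an $(n-1)$-th order recursive Toda system. This fails for the third cube: $\xn{n}{n+3}\XFn=\Fin{n}{\cof(\Fn{n}{2})}$ is trivial away from the top vertex $(1,\dots,1)$, so $\pa{n}{0}\xn{n}{n+3}\XFn$ is the constant $\ast$ cube, \emph{not} $\xn{n-1}{n+2}\XFn$ (which is nontrivial at its top vertex). The paper circumvents this by running the induction only over the first map $\AAAn{m}{1}\XFn$, realizing the $(m+1)$-cube map as a square of $m$-cube maps in which $\partial^{m+1}\Mn{m+1}{2}\XFn=\BBBn{m}{2}\XFn$ is one of the sides (Lemma \ref{npa}(b)), and then treating $\BBBn{n}{2}\XFn$ separately at the end via diagram \wref{eqtightrep}. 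You would need an analogous adjustment; as written, citing the inductive hypothesis for the full restricted three-cube chain is not literally available. A similar small issue appears in the base case, where $\Fn{1}{j}$ already has target $X_{j+2}$ rather than $\cof(f_{j+1})$, so no lift of the kind you describe is needed there. These are fixable details rather than a different route, and the overall inductive architecture and the role of Lemmas \ref{paM} and \ref{full cofiber replacement} match the paper's argument.
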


Here \wref{eqreplacer} corresponds to an $n$-th order cubical Toda system\w{\XG} by \S \ref{stwocubes}.

\begin{proof}
For each \w[,]{1\leq m\leq n} we have compatible strongly cofibrant replacements \w{\vartha{\CCm{m}{X_1}} : \mathbb{C}^{(m)}X_1 \to \CCn{m}{X_1}}
(see \S \ref{egva} and \S \ref{lnatwe}).
We extend the last of these, \w[,]{\vartha{\CCm{n}{X_1}}} to a strongly cofibrant replacement for \w[,]{\AAAn{n}{1}\XFn : \CCn{n}{X_1} \to \Mn{n}{2}\XFn }
defining successive extensions of \w{\vartha{\CCm{m}{X_1}}} by induction on \w[,]{m\geq 2} as follows:

For \w[,]{m=2} the diagram:
\myadiagnum[\label{diagramnum1}]{
X_1  \ar@{^{(}->}[d]_<<<{i^{X_1}} \ar@{^{(}->}[r]^{i^{X_1}} \ar[rrd]^>>>>>>>>{f_1} & CX_1 \ar@{^{(}->}[d]
\ar[rrd]^{Cf_1} & & \\
CX_1 \ar@{^{(}->}[r]^{i^{CX_1}} \ar[rrd]_{F_1:=\Fn{1}{1}} & C^2 X_1
& X_2 \ar[d]^<<<{f_2} \ar@{^{(}->}[r]_{i^{X_2}}
&  C X_2  \\
& & X_3  &  }
is a strongly cofibrant replacement for
\myadiagnumm[\label{diagramnum2}]{  X_1  \ar@{^{(}->}[d] \ar[r] \ar[rrd]^>>>>>>>>{f_1} & \ast \ar[d]
\ar[rrd]^{} & & \\
CX_1 \ar[r]^{} \ar[rrd]_{\Fn{1}{1}} & C \ee{}{X_1} & X_2
\ar[d]^<<<{f_{2}} \ar[r]  &  \ast  \\
& & X_3
&  }

 Using Lemma \ref{full cofiber replacement},  we can complete it to a strongly cofibrant replacement
for \w{\AAAn{2}{1}\XFn} (see \wref[).]{eqtwoone}

In the induction step, assume given a strongly cofibrant \w{\XG^{(m,m+1)}} (in the notation of \S \ref{order cahin}) and a weak equivalence \w[,]{\RR{m}{} : \Mm{m}{2}\XG \to \Mn{m}{2}\XFn} making
\mydiagram[\label{aa}]{
\CCm{m}{X_1}
\ar[d]_{\vartha{\CCm{m}{X_1}}}^{\simeq} \ar[rrrr]^>>>>>>>>>>>>>>>>{\AAA{m}{1}(\XG^{(m,m+1)})}  &&&& \Mm{m}{2}(\XG^{(m,m+1)}) \ar[d]^{\RR{m}{}}_{\simeq}  \\
\CCn{m}{X_1} \ar[rrrr]^{\AAAn{m}{1}\XFn} &&&& \Mn{m}{2}\XFn
}
commute. Thus \w{\AAA{m}{1} (\XG^{(m,m+1)}) } is a strongly cofibrant replacement for \w[.]{\AAAn{m}{1} \XFn }

We need to extend \wref{aa} to \w{\AAA{m+1}{1}(\XG^{(m+1,m+2)})} and \w[.]{\RR{m+1}{}}
Using Lemmas \ref{paCCnX} and \ref{npa} and diagram \wref[,]{diagrampa}
the map of \wwb{m+1}cubes \w{\AAAn{m+1}{1}\XFn} is given by a diagram of $m$-cubes:
\mydiagram[\label{ab}]{
\CCn{m}{X_1}\ar[d]_{(i^{\ee{m}{X_1}}\circ\, r^{i^{\ee{m-1}{X_1}}})\sb{\ast} }
\ar[rrr]^{\AAAn{m}{1}\XFn}  &&& \Mn{m}{2}\XFn \ar[d]^{\BBBn{m}{2}\XFn}  \\
\Fin{m}{C\ee{m}{X_1}} \ar[rrr]^{\Fin{m}{\Fn{m+1}{1}}} &&& \Fin{m}{\cof(\Fn{m}{2})}
}
(see \S \ref{Fin}), so we have a strongly cofibrant replacement for \w{\AAAn{m+1}{1} \XFn}  restricted to the upper left corner of \wref[,]{ab} back to front in the following diagram:
\mysdiag[\label{abc}]{
\CCm{m}{X_1} \ar[rrrd]^{\simeq}_{\vartha{\CCm{m}{X_1}}}   \ar@{^{(}->}[d]_{i^{\CCm{m}{X_1}}}
\ar[rrr]^>>>>>>>>>>{\AAA{m}{1}\XG}
&&& \Mm{m}{2}(\XG^{(m,m+1)})  \ar[rrrd]_{\simeq}^{\RR{m}{}} \\
C\CCm{m}{X_1} \ar[drrr]^{\simeq}_{(r\sp{C\CCm{m}{X_1}})\sb{\ast}}  &&&   \CCn{m}{X_1} \ar[d]_{}  \ar[rrr]_{\AAAn{m}{1}\XFn}  &&& \Mn{m}{2}\XFn \\
&&&   \Fin{m}{C\ee{m}{X_1}}
}

We want to extend \wref{abc} to a full strongly cofibrant replacement for \wref{ab} by extending the back of \wref{abc} as follows:
\myudiag[\label{abcd}]{
\CCm{m}{X_1}    \ar@{^{(}->}[d]_{i^{\CCm{m}{X_1}}} \ar[rrrrr]^>>>>>>>>>>>>>>>>>>>{\AAA{m}{1}(\XG^{(m,m+1)})}
&&&&& \Mm{m}{2}(\XG^{(m,m+1)}) \ar[d]^{\BBB{m}{2}(\XG^{(m,m+2)})} \\
C\CCm{m}{X_1} \ar[rrrrr]^<<<<<<<<<<<<<<<<<<{\GGG{m+1}{1}(\XG^{(m+1,m+2)})}  &&&&&    \xm{m}{m+3}(\XG^{(m+1,m+2)})
}
where \w{{\GGG{m+1}{1}(\XG)}} is a diagrammatic nullhomotopy as in \wref[,]{eqdiagnull}
in two steps:

\begin{itemize}
\item First, we have canonical choices for the right and bottom
maps in \wref{abcd} on the subdiagrams indexed by \w{\Ln{m-1}{}}
(that is, all but the last vertex of the $m$-cube). This is
because, by Definition \ref{BBB}, the restriction to
\w{\Ln{m-1}{}} of the map of $m$-cubes
\w{\BBB{m}{2}(\XG^{(m,m+2)})} is completely determined by
\w[.]{\Mm{m}{2}(\XG^{(m,m+1)})} Similarly, by Definition
\ref{diagnull}, the restriction to \w{\Ln{m-1}{}} of the
diagrammatic nullhomotopy \w{\GGG{m+1}{1}(\XG^{(m+1,m+2)})} is
determined by \w[]{\XG^{(m,m+1)}} (see Diagram \wref{eqmapcorners}
below).

Note that the extension of the back-to-front (weak equivalence) map of squares in \wref{abc} extends trivially on \w[,]{\Ln{m-1}{}} since all new targets are the zero object and all sources are cones.
\item Finally, again thinking of each square of $m$-cubes as an \wwb{m+2}cube, we can use
Lemma \ref{full cofiber replacement}
to extend to the last vertex \w{(1,\dotsc,1)} of \w[,]{\In{m+2}{}} obtaining a full strongly cofibrant
replacement \wref{abcd} for \wref[.]{ab} In particular, we have \w{\XG^{(m+1,m+2)}} extending \w{\XG^{(m,m+1)}} and a weak equivalence \w[.]{\RR{m+1}{}}
\end{itemize}

We thus constructed the left half of \wref{eqreplacer} and mapped it to \wref[.]{eqreplaced} For the right half,
we want the dotted maps in
\mydiagram[\label{eqtightrep}]{
  \Mm{n}{2}\XG \ar[d]^{\RR{n}{}}_{\simeq} \ar@{-->}[rrr]^{\BBB{n}{2}\XG} &&& \xm{n}{n+3}\XG \ar@{-->}[d]^{\simeq} \\
 \Mn{n}{2}\XFn \ar[rrr]^{\BBBn{n}{2}\XFn} &&& \xn{n}{n+3}\XFn
}
By definition we have iterated cones for \w{\xm{n}{n+3}\XG} and inclusions for \w{\BBB{n}{2}\XG} in all but the last vertex \w{(1,\dotsc,1)}
(see diagram \wref{abeqmapcorners} below). Since \w[,]{\xn{n}{n+3}\XFn =\Fin{n}{\cof(\Fn{n}{2})}}  which is trivial in all but the last vertex,
we have a canonical choice for \wref{eqtightrep} on \w{\Ln{n}{}}
and use Lemma \ref{full cofiber replacement} to complete \wref[.]{eqtightrep}
\end{proof}
\begin{example}\label{exfullreplacment}
Assume given a third order recursive Toda system \w{\XFn}  of length $5$ in $\Dd$. We start with the case  \w{n=2}
by completing \wref{diagramnum1} as in the proof above.
This will serve as the top $3$-cube in the following strongly cofibrant replacement for the next step.

Here the left half of \wref{eqreplacer} appears as a back to front map of $3$-cubes \w{\AAAn{3}{1}\XFn :\CCn{3}{X_1}\to \Mn{3}{2}\XFn }
(where the missing final vertex of \w[,]{\Mn{3}{2}\XFn} to be called \w[,]{X\sb{5}'} is indicated by $?$):
\myrrdiag[\label{eqmapcorners}]{
& X_1 \ar@{^{(}->}[ldd] \ar@{^{(}->}[d]_<<<{i^{X_1}} \ar@{^{(}->}[r]^{i^{X_1}}
\ar[rrd]^>>>>>>>>>>>>>{f_1} & CX_1 \ar@{^{(}->}[ldd]  \ar@{^{(}->}[d]^<<<{}
\ar[rrd]^{Cf_1} & & \\
& CX_1 \ar@{^{(}->}[ldd] \ar@{^{(}->}[r]^{i^{CX_1}}
\ar[rrd]_>>>>>>>>>>>>>>>{\Fn{1}{1}} & C^2 X_1 \ar@{^{(}->}[ldd]
 \ar[rrd]^<<<<<<<<<<<{\Gm{2}{1}} & X_2\ar@{^{(}->}[ldd]
 \ar[d]^<<<{f_2} \ar@{^{(}->}[r]_{i^{X_2}}
&  C X_2 \ar@{^{(}->}[ldd] \ar[d]^{G_2} \\
CX_1 \ar[rrd]^>>>>>>>>{Cf_1} \ar@{^{(}->}[d]_<<<{C(i^{X_1})} \ar@{^{(}->}[r]
& C^2X_1 \ar[rrd]^>>>>>>>>>>>>>>>>>>>>>{C^2f_1}   \ar@{^{(}->}[d]^<<<{}  &  &
X_3 \ar@{^{(}->}[ldd]  \ar[r]_{g_3} & X'_4 \\
C^2X_1 \ar[rrd]_{C\Fn{1}{1}} \ar@{^{(}->}[r]^{C(i^{CX_1})} & C^3 X_1
&    CX_2 \ar[d]_{Cf_2}\ar@{^{(}->}[r]_{C(i^{CX_2})} &  C^{2}X_2   \\
& &  CX_3  & ?
}
Using Lemma \ref{full cofiber replacement}, we can complete this to a strongly cofibrant replacement for full map of $3$-cubes
\w[.]{\AAAn{3}{1}\XFn :\CCn{3}{X_1}\to \Mn{3}{2}\XFn }

Therefore, we have a strongly cofibrant replacement (back to front) for all but the last vertex of
\w[:]{\BBBn{3}{2}\XFn :\Mn{3}{2}\XFn \to \xn{3}{6}\XFn }
\myrrdiag[\label{abeqmapcorners}]{
& X_2 \ar@{^{(}->}[ldd] \ar@{^{(}->}[d]^{f_2} \ar@{^{(}->}[r]^{i^{X_2}}
\ar@{^{(}->}[rrd] & CX_2 \ar@{^{(}->}[ldd]  \ar[d]^<<<{G_2}
\ar@{^{(}->}[rrd]  & & \\
& X_3 \ar@{^{(}->}[ldd] \ar@{^{(}->}[r]^{g_3}
\ar@{^{(}->}[rrd] & X'_4 \ar[ldd]^<<<<<<<<{g_4}
 \ar@{^{(}->}[rrd] & C X_2\ar@{^{(}->}[ldd]
 \ar[d]^<<<{Cf_2} \ar@{^{(}->}[r]_{Ci^{X_2}}
&  C^2 X_2 \ar@{^{(}->}[ldd] \ar[d]^{CG_2} \\
CX_2 \ar@{^{(}->}[rrd] \ar[d]_{Cf_2} \ar@{^{(}->}[r]
& C^2X_2 \ar@{^{(}->}[rrd]   \ar[d]^<<<{\Gm{2}{2}}
& & CX_3 \ar@{^{(}->}[ldd]  \ar[r]_<<<<{Cg_3} & CX'_4   \\
CX_3 \ar@{^{(}->}[rrd] \ar[r]^{G_3} & X'_5
&    C^2X_2 \ar[d]_{C^{2}f_2}\ar@{^{(}->}[r] &  C^{3}X_2   \\
& &  C^2X_3  & ?
}
Again we use Lemma \ref{full cofiber replacement} to complete the missing vertex $?$.
\end{example}
\begin{remark}
Note that, given a recursive higher Toda system \w[,]{\XFn} the cubical higher Toda system \w{\XG} of Proposition \ref{fullreplacemen}
is not functorial, since we choose liftings in each induction step. However,
if \w{\XFn=V\XF} for some $n$-th order cubical Toda system\w{\XF} (see \S \ref{indchcx}),
then by Proposition \ref{XF to RecVXF} we can choose \w{\XG} to be \w{\XF} itself.
Thus the construction of \w{\XG} can be thought of as a left inverse for the functor \w[.]{V}

The following Proposition provides the other direction:
\end{remark}
\begin{prop}\label{VXG tosr XFn}
Given an $n$-th order recursive Toda system \w{\XFn} in $\Dd$,  with \w{\XG} as in Proposition \ref{fullreplacemen}, we have  \w[.]{V\XG \tosr \XFn}
\end{prop}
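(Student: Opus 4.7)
The plan is to produce a zigzag of objectwise weak equivalences between the diagrammatic encoding (\ref{eqtosra}) of $V\XG$ and the diagrammatic encoding of $\XFn$, from which the claim $V\XG \tosr \XFn$ follows directly by Definition \ref{dtosr}.

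First I would apply Proposition \ref{XF to RecVXF} to the strongly cofibrant cubical Toda system $\XG$ provided by Proposition \ref{fullreplacemen}. This yields a commutative ladder whose vertical maps $\vartha{\CCm{n}{X_1}}$, $\vartha{\Mm{n}{2}\XG}$, and $(r^{\xm{n}{n+3}\XG})_{\ast}$ are weak equivalences (by Lemmas \ref{lnatwe} and \ref{Important2}), pointing from the cubical-style encoding $\CCm{n}{X_1}\to \Mm{n}{2}\XG\to \xm{n}{n+3}\XG$ of $\XG$ down to the recursive-style encoding $\CCn{n}{X_1}\to \Mn{n}{2}(V\XG)\to \xn{n}{n+3}(V\XG)$ of $V\XG$.

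Second, the conclusion of Proposition \ref{fullreplacemen} is precisely that (\ref{eqreplacer}) is a strongly cofibrant replacement for (\ref{eqreplaced}). Its inductive proof provides a second commutative ladder of objectwise weak equivalences, with vertical maps $\vartha{\CCm{n}{X_1}}$, the inductively constructed $\RR{n}{}$, and the terminal weak equivalence $\xm{n}{n+3}\XG\to \xn{n}{n+3}\XFn$, pointing from the same cubical-style encoding of $\XG$ over to the recursive-style encoding of $\XFn$.

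Crucially the two ladders share a common top row (the cubical-style encoding of $\XG$), so splicing them back-to-back yields a zigzag $V\XG\text{-encoding}\xleftarrow{\simeq}\XG\text{-cubical-encoding}\xrightarrow{\simeq}\XFn\text{-encoding}$ of objectwise weak equivalences in the appropriate diagram category, which is exactly what Definition \ref{dtosr} requires. The only routine verification needed is the commutativity of each individual square in the two ladders; this follows from the naturality statements in Proposition \ref{XF to RecVXF} and from the commutativity built into diagram (\ref{aa}) of the proof of Proposition \ref{fullreplacemen}. Consequently no new construction is required: the hardest work has already been done in Propositions \ref{XF to RecVXF} and \ref{fullreplacemen}, and this proof amounts to observing that those two statements can be pasted along their shared column to give the desired zigzag.
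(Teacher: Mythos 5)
Your proposal is correct and is essentially identical to the paper's own proof. Both arguments observe that, by construction, Proposition \ref{fullreplacemen} furnishes a ladder of weak equivalences from the cubical-style encoding (\ref{eqreplacer}) of $\XG$ down to the encoding (\ref{eqreplaced}) of $\XFn$, while Proposition \ref{XF to RecVXF} applied to $\XG$ gives a second ladder from that same encoding of $\XG$ down to the encoding of $V\XG$, and splicing these two along their common top row produces exactly the zigzag required by Definition \ref{dtosr}.
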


\begin{proof}
By the construction of \w{\XG} in the proof of Proposition \ref{fullreplacemen} we see that
$$
\xymatrix@R=25pt  {
\CCn{n}{X_1} \ar[rrr]^{\AAAn{n}{1} \XFn} &&&  \Mn{n}{2}\XFn
 \ar[rrr]^{\BBBn{n}{2}\XFn} &&& \xn{n}{n+3}\XFn
}
$$
is $\tos$-equivalent to \wref[,]{eqreplacer} which is $\tos$-equivalent by
 Proposition \ref{XF to RecVXF} to
$$
\xymatrix@R=25pt @C=35pt {
\CCn{n}{X_1} \ar[rr]^<<<<<<<<<<<{\AAAn{n}{1}(V\XG)} &&  \Mn{n}{2}(V\XG)
 \ar[rrr]^{\BBBn{n}{2}(V\XG)} &&& \xn{n}{n+3}(V\XG)
}
$$
\end{proof}

We thus obtain our main result, the converse of Theorem \ref{def1=def2}:

\begin{thm}\label{Rev def1=def2}
If \w{\XFn} is an $n$-th order recursive Toda system  in $\Dd$, and \w{\XG} is as in Proposition
\ref{fullreplacemen}, then \w[.]{\Tm{n}{1}\XG \tos \Tn{n}{1}\XFn }
\end{thm}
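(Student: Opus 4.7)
The plan is to chain together the three results already established, since the theorem is essentially a combination of Theorem \ref{def1=def2}, Proposition \ref{VXG tosr XFn}, and Corollary \ref{Tn=Tn}.

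First I would note that the cubical Toda system $\XG$ furnished by Proposition \ref{fullreplacemen} is, by construction, strongly cofibrant in the sense of Definition \ref{dtwocubes}. This is the precise hypothesis needed to invoke Theorem \ref{def1=def2}, which then provides the weak equivalence
\[
\Tm{n}{1}\XG \;\tos\; \Tn{n}{1}(V\XG).
\]
This reduces the problem to comparing $\Tn{n}{1}(V\XG)$ with $\Tn{n}{1}\XFn$, that is, to showing that passing through the ``round trip'' $\XFn \mapsto \XG \mapsto V\XG$ does not change the recursive Toda bracket up to $\tos$.

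Next, Proposition \ref{VXG tosr XFn} gives exactly $V\XG \tosr \XFn$, and Corollary \ref{Tn=Tn} then yields
\[
\Tn{n}{1}(V\XG) \;\tos\; \Tn{n}{1}\XFn.
\]
Composing the two $\tos$-equivalences produces the desired conclusion $\Tm{n}{1}\XG \tos \Tn{n}{1}\XFn$.

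In short, there is no substantive new content to prove here; the main obstacle was already handled in Proposition \ref{fullreplacemen}, whose inductive construction of $\XG$ together with the weak equivalence $\RR{n}{}$ guarantees both the strong cofibrancy needed for Theorem \ref{def1=def2} and the compatibility with $\XFn$ needed for Proposition \ref{VXG tosr XFn}. My task in writing the proof is therefore just to assemble the three invocations in the correct order and to make explicit that the strongly cofibrant replacement supplied by Proposition \ref{fullreplacemen} is precisely what activates Theorem \ref{def1=def2}.
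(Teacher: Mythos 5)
Your proposal is correct and follows essentially the same route as the paper: invoke Theorem \ref{def1=def2} to get $\Tm{n}{1}\XG \tos \Tn{n}{1}(V\XG)$, then Proposition \ref{VXG tosr XFn} together with Corollary \ref{Tn=Tn} to get $\Tn{n}{1}(V\XG) \tos \Tn{n}{1}\XFn$, and compose. Your explicit remark that the strong cofibrancy of $\XG$ supplied by Proposition \ref{fullreplacemen} is what licenses the use of Theorem \ref{def1=def2} is a helpful clarification that the paper leaves implicit.
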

\begin{proof}
By Theorem \ref{def1=def2}  \w{\Tm{n}{1}\XG \tos \Tn{n}{1}(V\XG) } and by Proposition
\ref{VXG tosr XFn}  and Corollary \ref{Tn=Tn} we get that
\w[.]{\Tn{n}{1}(V\XG) \tos \Tn{n}{1}\XFn }
\end{proof}
In fact we can show that we have a commutative diagram:
$$
\xymatrix@R=25pt {
L^n X_1  \ar[rrr]^{\Tm{n}{1}\XG}
 \ar[d]_{\zeta_{\mathbb{C}^{n}X_1}}^{\simeq}  &&& X'_{n+3}
\ar[d]^{\simeq} \\
{\widetilde{\Sigma}}^{n} X_1
 \ar[rrr]_{\Tn{n}{1}\XFn }
&&& \cof^{(n)}(\xn{n}{n+3}).
}
$$
\end{document}